\crefname{subsection}{subsection}{subsections}
\newtheoremstyle{mytheoremstyle} 
    {5pt}                    
    {5pt}                    
    {\itshape}                   
    {\parindent}                           
    {\bf}                   
    {.}                          
    {.5em}                       
    {}  
\theoremstyle{mytheoremstyle}
\newtheorem{theorem}{Theorem}[section]
\newtheorem{prop}[theorem]{Proposition}
\newtheorem{coro}[theorem]{Corollary}
\newtheoremstyle{mytdefintionstyle} 
    {5pt}                    
    {5pt}                    
    {\rm}                   
    {\parindent}                           
    {\bf}                   
    {.}                          
    {.5em}                       
    {}  
\theoremstyle{remark}
\newtheorem{rmrk}[theorem]{Remark}
\theoremstyle{mytdefintionstyle}
\newtheorem{defn}[theorem]{Definition}
\newtheorem{exmp}[theorem]{Example}
\newtheoremstyle{exmp_contd}
    {5pt}                    
    {5pt}                    
    {\rm}                   
    {\parindent}                           
    {\bf}                   
    {.}                          
    {.5em}                       
    {\thmname{#1}\ \thmnumber{ #2}\thmnote{#3}\ (continued)}  
\theoremstyle{exmp_contd}
\newcommand\nameft\textrm
\newcommand{\rbh}{{\mathrm{rbh}}}
\DeclareMathOperator{\img}{im}
\DeclareMathOperator{\Hom}{Hom}
\DeclareMathOperator{\Spec}{Spec}
\newcommand\p{\mathfrak{p}}
\newcommand\OO{\mathcal{O}}
\newcommand\PP{\mathbb{P}}
\renewcommand\AA{\mathbb{A}}
\newcommand\A{\mathcal{A}}
\newcommand{\D}{\mathcal{D}}
\newcommand\mcI{\mathcal{I}}
\newcommand{\Q}{\mathbb{Q}}
\newcommand{\R}{\mathbb{R}}
\newcommand{\V}{\mathcal{V}}
\newcommand{\Z}{\mathbb{Z}}
\renewcommand\phi{\varphi}
\definecolor{darkgray}{rgb}{0.3,0.3,0.3}
\definecolor{darkgreen}{rgb}{0.008,0.617,0.067}
\definecolor{brown}{rgb}{0.6,0.4,0.2}
\author{Mohamed Barakat}
\address{Department of mathematics, University of Siegen, 57068 Siegen, Germany}
\email{\href{mailto:Mohamed Barakat <mohamed.barakat@uni-siegen.de>}{mohamed.barakat@uni-siegen.de}}
\author{Markus Lange-Hegermann}
\address{Department of electrical engineering and computer science, Ostwestfalen-Lippe University of Applied Sciences and Arts, 32657 Lemgo, Germany}
\email{\href{mailto:Markus Lange-Hegermann <markus.lange-hegermann@th-owl.de>}{markus.lange-hegermann@th-owl.de}}
\begin{document}

\title[An algorithmic approach to Chevalley's Theorem]{An algorithmic approach to Chevalley's Theorem on images of rational morphisms between affine varieties}

\begin{abstract}
  The goal of this paper is to introduce a new constructive geometric proof of the affine version of Chevalley's Theorem.
  This proof is algorithmic and a verbatim implementation resulted in an efficient code for computing the constructible image of rational maps between affine varieties.
  Our approach extends the known descriptions of uniform matrix product states to $\operatorname{uMPS}(2,2,5)$.
\end{abstract}

\thanks{This work is a contribution to Project II.1 of SFB-TRR 195 'Symbolic Tools in Mathematics and their Application' funded by Deutsche Forschungsgemeinschaft (DFG)}

\keywords{%
Constructible images,
Chevalley's Theorem,
Gröbner bases,
elimination%
}
\subjclass[2010]{%
13P10, 
13P15, 
68W30, 
14Q20, 
14R20. 
}
\maketitle


\section{Introduction}\label{section_introduction}

\setcounter{theorem}{-1}
\begin{theorem}[Chevalley] \label{thm:Chevalley_general}
  Let $f: X \to Y$ be rational map of affine varieties and $C$ a constructible subset of $X$.
  Then the image\footnote{By ``image'' we always mean the set-theoretic image. The scheme-theoretic image is closed by definition; it is the vanishing locus of the elimination ideal of the vanishing ideal of the graph of $f$.} $f(C)$ is again constructible.
\end{theorem}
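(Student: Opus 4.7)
The plan is to peel off structure until the problem reduces to projecting a single irreducible closed set by a coordinate projection, and then to carve out the image as the disjoint union of a dense open piece (computed via elimination) and a recursively handled lower-dimensional correction. First I would replace $C$ by $C \cap \mathrm{dom}(f)$, which is still constructible since $\mathrm{dom}(f)$ is open, so that $f$ becomes a morphism. Next, using the graph $\Gamma_f \subseteq \mathrm{dom}(f) \times Y$, which is closed because $Y$ is separated, rewrite $f(C) = \pi_Y(\Gamma_f \cap (C \times Y))$; after embedding in affine space $\pi_Y$ becomes a coordinate projection, and iterating one variable at a time reduces the problem to a single projection $\pi\colon \mathbb{A}^{n+1} \to \mathbb{A}^n$. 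Adjoining new variables $t_j$ with relations $g_j\, t_j - 1 = 0$ realizes any locally closed subset as a closed subset of a larger affine space, so it suffices to project closed sets; decomposing into irreducible components then reduces to the case $C = V$ irreducible closed in $\mathbb{A}^{n+1}$.

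Given $V$ irreducible with defining ideal $I \subseteq k[x_1,\ldots,x_{n+1}]$, the Zariski closure of $\pi(V)$ is the vanishing locus of the elimination ideal $I \cap k[x_1,\ldots,x_n]$, computable via a Gröbner basis using an elimination term order. The core step is to identify a proper closed subset $B \subsetneq \overline{\pi(V)}$ with $\overline{\pi(V)} \setminus B \subseteq \pi(V)$: geometrically this is the classical fact that a dominant morphism between irreducible varieties has image containing a dense open subset of its target, provable via generic freeness or Noether normalization. Algorithmically, $B$ is extracted by examining the leading coefficients in $x_{n+1}$ of a suitable Gröbner basis of $I$: over the open subset where at least one such coefficient is non-zero, the specialized polynomials still cut out a non-empty fiber in $V$, guaranteeing a preimage.

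Finally I would conclude by induction on $\dim V$ (the base case $\dim V = 0$ being trivial): the missing set $\pi(V) \cap B$ equals $\pi(V \cap \pi^{-1}(B))$, and since $V$ is irreducible and $B \subsetneq \overline{\pi(V)}$, the closed subset $V \cap \pi^{-1}(B) \subsetneq V$ has strictly smaller dimension; the induction hypothesis then provides a constructible description of its image. Combined with the open piece $\overline{\pi(V)} \setminus B$ this yields a constructible description of $\pi(V)$, and hence of $f(C)$. The main obstacle is the core step — constructively producing $B$ and proving the strict containment $B \subsetneq \overline{\pi(V)}$ — since this is where the purely existential statement ``a dominant morphism has image containing a dense open'' must be turned into an explicit ideal-theoretic recipe whose output can be fed back into the recursion.
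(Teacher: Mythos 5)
Your overall skeleton is the classical one and largely matches the paper's: pass to the graph, turn the question into a projection, use a Rabinowitsch-type cover to trade locally closed sets for closed ones, and then peel off the image as (dense open piece) $\uplus$ (image of a strictly smaller closed set), with termination by Noetherianity/dimension. Where you differ is in the key lemma: you construct the proper closed ``bad locus'' $B$ via leading coefficients of a Gröbner basis in one distinguished variable (extension theorem / generic freeness / Noether normalization), i.e.\ the Kemper-style relative boundary hull that the paper surveys as Algorithm~\ref{alg:Kemper}, whereas the paper's own contribution is a different hull: first cut $\Gamma$ down to a subset with generically zero-dimensional fibers, then take the projective closure fiberwise and project the points at infinity. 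Your route is a legitimate alternative proof of the theorem (and is essentially the textbook one); the paper's route is chosen because the resulting hulls are much smaller, but that is an efficiency issue, not a correctness issue.

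There is, however, a genuine structural gap in how you order the reductions. You first reduce to a single one-variable projection $\mathbb{A}^{n+1}\to\mathbb{A}^n$ and only then invoke the Rabinowitsch trick to ``reduce to closed sets''. But the Rabinowitsch cover of a locally closed subset of $\mathbb{A}^{n+1}$ lives in $\mathbb{A}^{n+1+r}$, so after covering, the map you must analyze is an $(r{+}1)$-variable projection of a closed set, not a one-variable projection; and if you decompose that again into one-variable steps, the intermediate images are merely constructible, so you are back to needing ``image of a constructible set under a one-variable projection'' — exactly the statement you were trying to establish. Neither the number of variables nor the dimension decreases along this loop (the cover has the same dimension as the locally closed set it covers), so the induction as described is circular. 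Your core step itself is fine, but it is only stated for \emph{closed} irreducible $V$ under a \emph{one}-variable projection, and that combination cannot be bootstrapped this way. Two standard repairs: (i) apply the Rabinowitsch cover once, at the very top, and then eliminate \emph{all} fiber variables in a single block projection of a closed set, constructing $B$ by generic freeness (or by the product of leading coefficients with respect to a block elimination order, as in Algorithm~\ref{alg:Kemper}) — this is the paper's outer structure in \Cref{proof_Chevalley_general} combined with Algorithms~\ref{alg:iteration} and a hull construction, and it makes your induction on dimension/Noetherianity well founded; or (ii) keep the variable-by-variable scheme but strengthen the one-variable core step so that it applies directly to locally closed inputs $V\setminus W$ (showing the generic fiber of $V$ is not contained in $W$), avoiding Rabinowitsch inside the recursion. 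Two smaller points: the ``one non-vanishing leading coefficient gives a point in the fiber'' argument needs an algebraically closed base field (or a scheme-theoretic reformulation, as the paper uses $\Spec$), and the strictness $B\subsetneq\overline{\pi(V)}$ does require the observation that, for a reduced Gröbner basis with respect to a block order and $I$ prime, the relevant leading coefficients do not lie in $I$ — this is the obstacle you correctly flagged, and it can be closed along these lines.
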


This paper develops a geometric proof of the above affine version of Chevalley's Theorem which is simultaneously a description of an efficient algorithm for computing the constructible image $f(C)$.
The general case of Chevalley's Theorem reduces to the affine case.

The central idea of all geometric and algebraic proofs of Chevalley's Theorem we are aware of is the use of the following \Cref{thm:lca_of_image} which ensures the existence of a \emph{proper} closed subset $D$ of $\overline{f(C)}$ containing the (constructible) set $\overline{f(C)} \setminus f(C)$ of all points outside of the image:
\begin{equation*}
  \overline{f(C)} \setminus f(C) \subseteq D \subsetneq \overline{f(C)} \mbox{.}
\end{equation*}
We call such a subset $D$ a \emph{relative boundary hull} of $f(C) \subseteq Y$ (cf.~\Cref{section_constructible}).
\begin{theorem}\label{thm:lca_of_image}
  Let $f: X \to Y$ be a rational map of affine varieties and $C$ a Zariski-closed subset of $X$.
  Then the image $f(C)$ admits a relative boundary hull $D$.
\end{theorem}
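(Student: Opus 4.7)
The plan is to reduce to the case where $f$ is a regular morphism and $C$ is irreducible with $f|_C$ dominant, and then to produce $D$ as the vanishing locus of a single nonzero element of $k[\overline{f(C)}]$ supplied by a relative Noether normalization.

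Replace $X$ by the closure $\Gamma_f$ of the graph of $f$ in $X\times Y$ and $C$ by $\pi_X^{-1}(C)\subseteq\Gamma_f$; the second projection is then a regular morphism and carries the new $C$ onto the old $f(C)$. Decompose $C=C_1\cup\cdots\cup C_r$ into irreducible components and keep only those indices $i$ for which $\overline{f(C_i)}$ is not strictly contained in any other $\overline{f(C_j)}$, so that the remaining $\overline{f(C_i)}$ become the irreducible components of $\overline{f(C)}$. Once the theorem is established for irreducible $C$, each $f(C_i)$ has a relative boundary hull $D_i\subsetneq\overline{f(C_i)}$ and the set
\[
  D \;:=\; \bigcup_i D_i \;\cup\; \bigcup_{i\ne j}\bigl(\overline{f(C_i)}\cap\overline{f(C_j)}\bigr)
\]
meets each irreducible component of $\overline{f(C)}$ in a proper closed subset (hence is itself proper) and contains $\overline{f(C)}\setminus f(C)$. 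Thus it suffices to treat the case where $C$ is irreducible and, after replacing $Y$ by $\overline{f(C)}$, $f|_C\colon C\to Y$ is dominant; the comorphism $A:=k[Y]\hookrightarrow k[C]=:B$ is then an injection of finitely generated $k$-integral domains.

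To this injection I would apply a relative Noether normalization: there exist $t_1,\dots,t_m\in B$ algebraically independent over $\operatorname{Frac}(A)$ and a nonzero $a\in A$ such that $A_a[t_1,\dots,t_m]\hookrightarrow B_a$ is module-finite. For every prime $\p\subset A$ with $a\notin\p$, going-up applied to this integral extension lifts $\p A_a$ to a prime of $B_a$, whose contraction to $B$ is a point of $C$ mapping under $f$ to the point of $Y$ defined by $\p$. Hence the basic open $D(a)\subseteq Y$ is contained in $f(C)$, and $D:=V(a)\subsetneq\overline{f(C)}$ is the desired relative boundary hull.

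The main obstacle is not the abstract existence of such an $a$, but its \emph{explicit} production from the input ideals, as demanded by the algorithmic aim of the paper. The natural route is to compute a Gr\"obner basis of an elimination ideal with a block order pushing the source coordinates below the target, and to take $a$ to be a suitable product of leading coefficients regarded as polynomials in the source variables over $k[Y]$; the Gr\"obner-basis certificate then back-substitutes a preimage in $C$ of any point of $D(a)$. Making this specialization argument rigorous and self-contained, and arranging it so that no recomputation is needed for the passage from closed $C$ to constructible $C$ in the full \Cref{thm:Chevalley_general}, is the technical heart I would anticipate in the remainder of the paper.
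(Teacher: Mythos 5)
Your proof is correct, but it takes a genuinely different route from the paper's. After the same reduction to projecting a closed subset of a product and the same decomposition into irreducible components (your extra terms $\overline{f(C_i)}\cap\overline{f(C_j)}$ are harmless but unnecessary, since $\bigcup_i D_i$ already contains $\overline{f(C)}\setminus f(C)$), you produce $D$ from a relative Noether normalization $A_a[t_1,\dots,t_m]\hookrightarrow B_a$ together with lying over, so that $D=V(a)$; the Gr\"obner-basis realization you sketch (block elimination order, product of leading coefficients) is exactly Algorithm~\ref{alg:Kemper}, i.e., \cite[Alg.~10.3]{kemper2010course}, and the Noether-normalization variant is the one the paper attributes to \cite[Thm.~1.8.4]{EGA4}; both belong to the family of generic-freeness proofs surveyed in \Cref{section_further_approaches}. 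The paper's own proof of \Cref{thm:lca_of_projection} is different: it first intersects the (irreducible) $\Gamma$ with a generic affine subspace so that the generic fibers become zero-dimensional without changing $\overline{\pi(\Gamma)}$ (\Cref{thm:IntersectWithGenericAffineSubspace}), then takes the projective closure in $\PP^n_B$ and sets $D\coloneqq\widehat{\pi}(\Gamma_0^\infty)$, the image of the points at infinity under the proper projection $\widehat{\pi}$ --- properness of $\PP^n_B\to\Spec B$ replaces generic finiteness. What each approach buys: yours rests on a standard commutative-algebra lemma and avoids projective closures and the choice of cutting hyperplanes, but the hull $V(a)$ certifies generic freeness/finiteness, a much stronger property than mere nonemptiness of fibers, and the paper documents (\Cref{rmrk:comparison}, \Cref{exmp:rationalcurves}, \Cref{exmp:uMPS224_with_Kemper}) that such hulls typically have far higher degree and many more components than its hull $D^\infty$, which is the computational point of the paper. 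One convention to flag: for a rational $f$ and $C$ meeting the indeterminacy locus, the projection of the graph closure over $C$ can be strictly larger than $f(C\cap\operatorname{dom}f)$, so your phrase ``carries the new $C$ onto the old $f(C)$'' needs the same implicit convention on images of rational maps that the paper itself adopts; it does not affect the validity of the argument relative to the paper.
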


This already suffices to prove the following special form of Chevalley's Theorem, which in turn easily implies \Cref{thm:Chevalley_general} by a generalized version of Rabinowitsch trick, as shown in \Cref{proof_Chevalley_general}.
\begin{theorem}[Chevalley] \label{thm:Chevalley}
  Let $f: X \to Y$ be a rational map of affine varieties and $C$ a Zariski-closed subset of $X$.
  Then the image $f(C)$ is constructible.
\end{theorem}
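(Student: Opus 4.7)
The plan is to use Noetherian induction on the Zariski closure $\overline{f(C)} \subseteq Y$, with \Cref{thm:lca_of_image} supplying the inductive step. If $f(C) = \emptyset$ the statement is trivial, so I assume $\overline{f(C)}$ is a nonempty closed subset of $Y$ and I assume inductively that for every Zariski-closed $C' \subseteq X$ whose image satisfies $\overline{f(C')} \subsetneq \overline{f(C)}$, the set $f(C')$ is constructible. This induction is well-founded because $Y$ is Noetherian.

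I would then apply \Cref{thm:lca_of_image} to produce a closed subset $D$ with
\[
  \overline{f(C)} \setminus f(C) \subseteq D \subsetneq \overline{f(C)},
\]
and split the image along $D$ as
\[
  f(C) = \bigl(f(C) \setminus D\bigr) \cup \bigl(f(C) \cap D\bigr).
\]
The first piece equals $\overline{f(C)} \setminus D$: the inclusion $\overline{f(C)} \setminus D \subseteq f(C)$ is immediate from $\overline{f(C)} \setminus f(C) \subseteq D$, and the reverse inclusion is obvious. Hence the first piece is the intersection of the closed set $\overline{f(C)}$ with the open set $Y \setminus D$, so it is locally closed and therefore constructible.

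For the second piece I would observe that $f(C) \cap D = f(C')$ with $C' := f^{-1}(D) \cap C$, which is a Zariski-closed subset of $X$. Since $\overline{f(C')} \subseteq D \subsetneq \overline{f(C)}$, the induction hypothesis applies and yields that $f(C')$ is constructible. As constructible sets are closed under finite unions, $f(C)$ is constructible.

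The delicate point is the handling of the rational (rather than regular) map $f$, because then $f^{-1}(D) \cap C$ is not a priori closed in $X$. I would deal with this in the standard way by replacing $f$ with the projection $\pi_Y\colon \Gamma_f \to Y$ from the closure of its graph $\Gamma_f \subseteq X \times Y$, which is a genuine morphism between affine varieties, and by replacing $C$ with $\Gamma_f \cap (C \times Y)$; under this reduction the image $f(C)$ and the preimage of $D$ behave as required, and the inductive argument above goes through verbatim. The remaining work, which I take to be purely bookkeeping, is to verify that this graph reduction preserves the hypothesis of \Cref{thm:lca_of_image} so that the inductive step is legitimate at each stage.
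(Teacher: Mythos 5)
Your proposal is correct and is essentially the paper's own proof: the same splitting $f(C) = (\overline{f(C)}\setminus D)\,\uplus\, f(f^{-1}(D)\cap C)$ furnished by the relative boundary hull of \Cref{thm:lca_of_image}, with your Noetherian induction on $\overline{f(C)}\subseteq Y$ being precisely the paper's termination argument for the iteration \eqref{eq:sum}, and your passage to the graph matching the reformulation via projections in \Cref{section_algebraic_geometric}. The only point to tidy is the final reduction: projecting $\overline{\Gamma_f}\cap(C\times Y)$ can give a set strictly larger than the set-theoretic image when $C$ meets the indeterminacy locus (the closure of the graph acquires points over that locus), so the clean closed model is obtained from $f$ restricted to $C\cap\operatorname{dom}(f)$, e.g.\ via a Rabinowitsch cover as in \Cref{exmp:Rabinowitsch}, after which your inductive argument goes through verbatim.
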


All proofs of \Cref{thm:Chevalley} basically use the same iterative algorithm and only differ in their choice of the relative boundary hull $D$ of $f(C)$ constructed in the proof of \Cref{thm:lca_of_image}.
Indeed, the choice of $D$, in particular the number of its irreducible components and their degrees have a profound impact on the computational efficiency of the resulting iteration.

The main reduction step in this iterative algorithm relies on the properness of $D$ as a subset of $\overline{f(C)}$.
More precisely, the projection formula $f( f^{-1}(D) \cap C ) = D \cap f(C)$ yields the equation
\begin{align*} \label{eq:sum} \tag{$\uplus$}
  f(C) &= (\overline{f(C)} \setminus D) \uplus (D \cap f(C))
  \\
  &= (\underbrace{\overline{f(C)} \setminus D}_{\text{locally closed}}) \uplus f( f^{-1}(D) \cap C ) \mbox{,}
  \nonumber
\end{align*}
which decomposes $f(C)$ into the (disjoint) union of the \emph{locally closed} subset $\overline{f(C)} \setminus D$ and the image under $f$ of the strictly smaller subset $f^{-1}(D) \cap C \subsetneq C$ of points of $C$ lying over the relative boundary hull $D$.
The termination of the iteration is guaranteed by the Noetherianity of $X$ or of $Y$.
This proves \Cref{thm:Chevalley} as a corollary of \Cref{thm:lca_of_image}.

In \Cref{section_algebraic_geometric} we recall how the proof of \Cref{thm:lca_of_image} reduces to the construction of a relative boundary hull of the image of a closed subset $\Gamma \subseteq \AA_Y^n$ under the base-projection $\pi:\AA_Y^n \twoheadrightarrow Y$.

The algorithmic heart of this paper is \Cref{section_proof}, where a new relative boundary hull of $\pi(\Gamma)$ is constructed in Algorithm~\ref{alg:LCA}.
The idea of our construction is to replace $\Gamma \subseteq \AA_Y^n$ by a closed subset\footnote{See \Cref{rmrk:dualidea} for a dual point of view.} $\Gamma_0 \subseteq \Gamma$, that has the same projection closure $\overline{\pi(\Gamma_0)}=\overline{\pi(\Gamma)}$ as $\Gamma$, but where the restriction $\pi_{| \Gamma_0}$ has generically zero-dimensional fibers.
The generic zero-dimensionality of the fibers gives rise to the following argument:
Let $\widehat{\Gamma}_0$ denote the closure of $\Gamma_0 \subseteq \AA_Y^n$ in $\PP_Y^n$ and $H \coloneqq \PP_Y^n \setminus \AA_Y^n$ the hyperplane at infinity.
Then the closed image $D$ of the points $\Gamma_0^\infty \coloneqq \widehat{\Gamma}_0\cap H$ at infinity under the closed projection $\widehat{\pi}: \PP_Y^n \twoheadrightarrow Y$ is a relative boundary hull of $\pi(\Gamma)$.

As mentioned above, constructing a relative boundary hull $D$ is a central step of all previous proofs of Chevalley's theorem known to us.
To simplify the notation one can assume without loss of generality that $\pi_{|\Gamma}$ is dominant, i.e., that $\overline{\pi(\Gamma)}=Y$.
Most of these approaches construct an open set $U=Y\setminus D \subseteq Y$ with a property guaranteeing the inclusion $U \subseteq \pi(\Gamma)$.
However, these properties tend to be far too strong,  guaranteeing much more than the inhabitedness of the fibers over $U$.
This usually results in an unnecessarily large relative boundary hull of high degree and many irreducible components, slowing down the recursion significantly.
One such property is \emph{generic freeness}, where $\OO_{\pi_{|\Gamma}^{-1}(U)}$ is required to be a non-zero free $\OO_U$-module.
The idea of using local freeness was for example exploited in
\begin{itemize}
	\item \cite[Alg.~10.3]{kemper2010course} by an approach using Gröbner bases,
	\item \cite{thomasalg_jsc} by an approach based on resultant, or
	\item Chevalley's original proof \cite[Thm.~3]{chevalley1955schemas}.
\end{itemize}
Similarly, an even more elaborate approach are (canonical) Gröbner covers \cite{MW10}, where the preimage $\pi^{-1}(U)$ can be described by a single Gröbner basis with leading ideal independent of specializations to fibers $\pi^{-1}(u)$ for $u\in U$.
Alternatively, the \emph{Noether normalization} lemma suggests choosing $U$ such that $\pi_{|\pi^{-1}(U)}$ is the composition of a finite surjective morphism $\pi^{-1}(U) \twoheadrightarrow \AA_U^m$ with the projection $\AA_U^m \twoheadrightarrow U$, cf., e.g., \cite[Thm.~1.8.4]{EGA4}.
For a more detailed comparison of these and more approaches we refer the reader to \Cref{section_further_approaches}, where we also argue for the efficiency of our algorithm.

The recent approach \cite{ComputingImages} is similar to ours in that it specifically targets small relative boundary hulls in a computationally efficient algorithm.
It is tailor-made for rational morphisms between projective varieties, whereas our approach is tailor-made for the affine case.
For a detailed comparison see \Cref{subsection_Leipzig}.
The only non-intrinsic step in this and in our approach lies in the reduction $\Gamma \leadsto \Gamma_0$, i.e., the reduction to the case of (generically) zero-dimensional fibers.

Note that all algorithmic approaches to Chevalley's Theorem have to start by computing the closure of the image.
This relies on elimination and usually remains an expensive first step.
Whereas our algorithm theoretically needs primary decompositions, we manage to postpone (or even prevent, cf.\ \Cref{section_no_primary}) primary decompositions; any primary decomposition is computed after passing from $\Gamma$ to $\Gamma_0$ (cf.\ \Cref{subsection_algebraic_dimension}).

An important special case of constructible images are orbits of algebraic group actions $\alpha: Y \times G \to Y$.
The orbit $yG$ of $y \in Y$ is the image of the action map $\alpha_y: G \to Y, g \mapsto yg$.
Chevalley's Theorem implies that each such orbit is even locally closed (\Cref{prop:Borel}).
Group orbits play an important role in the representation theory of semisimple algebraic groups (cf.~\cite{Jantzen04} for nilpotent orbits).
Another prominent example is quantum information theory, where group orbits classify entangled states of multiple qubit systems.
In Section~\ref{sec:orbits} we show how the special context of algebraic group orbits allows for various improvements in our algorithm (cf.~\Cref{coro_intersect_not_iterate} and \Cref{prop_closure_pi_Gamma_0}).
These improvements speed up computations considerably.

As mentioned above, the Gröbner basis algorithm remains the bottleneck when treating bigger examples.
However, we expect that the limits of the Gröbner basis machinery can be pushed much further for $G$-equivariant problems (like the computation of algebraic group orbits) once the $G$-equivariance can be exploited in the algorithm \emph{and} in the underlying data structures.
We will address this in future work.

The structure of the paper is as follows.
\Cref{section_constructible} lays the topological framework for constructible sets, 
which is used in \Cref{section_algebraic_geometric} to rephrase and prove Chevalley's \Cref{thm:Chevalley_general} in terms of projections.
Our relative boundary hull is constructed in \Cref{section_proof} and compared to other relative boundary hulls in \Cref{section_further_approaches}.
In \Cref{section_examples} we reinterpret (generalizations of) Rabinowitsch's trick in our setting (cf.\ \Cref{exmp:Rabinowitsch}).
We demonstrate the algorithmic efficiency of our approach on the uniform matrix product states in \Cref{exmp:uMPS}, significantly improving on the state of the art in \cite{uMPS} by providing an explicit description of $\operatorname{uMPS}(2,2,5)$ and recomputing $\operatorname{uMPS}(2,2,4)$ in seconds without using any additional representation-theoretic arguments.
We 
present a formal Gröbner basis proof for the dimension reduction in \Cref{section_exist_planes} and show how to completely avoid the primary decomposition in \Cref{section_no_primary}.
In \Cref{sec:improve_alg_iteration} we show how to replace the linear structure of the classical proofs summarized in Algorithm~\ref{alg:iteration} by a more sophisticated branching in Algorithm~\ref{alg:iteration_graph}, in which unnecessary branches are removed as early as possible.
Such a branched algorithm can easily be parallelized at several places, a planned improvement of our current implementation.

\section{Constructible sets}\label{section_constructible}

Let $Y$ be a topological space.
A subset $A \subseteq Y$ is said to be
\begin{itemize}
	\item \textbf{locally closed} if one of the following equivalent descriptions holds:
	\begin{itemize}
		\item $A$ is the intersection of an open and a closed subset;
		\item $A$ is the difference of two closed subsets;
		\item $A$ is the difference of two open subsets.
	\end{itemize}
	\item \textbf{constructible} if it is the \emph{finite} union of locally closed sets.
\end{itemize}

We are interested in constructible sets as they describe images of rational maps by Chevalley's \Cref{thm:Chevalley}.

\begin{exmp}\label{example_image}
	Let $k$ be a field or even $k=\Z$.
	The image of the polynomial map
	\begin{align*}
		\varphi:
		\begin{cases}
		\AA_k^2 &\to \AA_k^2, \\
		(x_1,x_2) &\mapsto (b_1,b_2) \coloneqq (x_1, x_1x_2) \mbox{.}
		\end{cases}
	\end{align*}
	\begin{center}
		\begin{tikzpicture}[scale=0.5]
		
		\filldraw[gray!30] (-3.1,-3.1) rectangle (3.1,3.1);
		
		\draw[] (-3.1,-3) -- (3.1,-3);
		\draw[] (-3.1,-2) -- (3.1,-2);
		\draw[] (-3.1,-1) -- (3.1,-1);
		\draw[] (-3.1,0) -- (3.1,0);
		\draw[] (-3.1,1) -- (3.1,1);
		\draw[] (-3.1,2) -- (3.1,2);
		\draw[] (-3.1,3) -- (3.1,3);
		
		\draw[] (-3,-3.1) -- (-3,3.1);
		\draw[] (-2,-3.1) -- (-2,3.1);
		\draw[] (-1,-3.1) -- (-1,3.1);
		\draw[] (0,-3.1) -- (0,3.1);
		\draw[] (1,-3.1) -- (1,3.1);
		\draw[] (2,-3.1) -- (2,3.1);
		\draw[] (3,-3.1) -- (3,3.1);
		
		\coordinate (r) at (16,0);
		
		\draw[->] ($0.25*(r)$) -- node[above] {$(x_1,x_2) \mapsto (x_1, x_1x_2)$} ($0.75*(r)$);
		
		\filldraw[gray!30] ($(-3.1,-3.1)+(r)$) rectangle ($(-0.1,3.1)+(r)$);
		\filldraw[gray!30] ($(0.1,-3.1)+(r)$) rectangle ($(3.1,3.1)+(r)$);
		
		\draw[] ($(-3.1,-3)+(r)$) -- ($(3.1,3)+(r)$);
		\draw[] ($(-3.1,-2)+(r)$) -- ($(3.1,2)+(r)$);
		\draw[] ($(-3.1,-1)+(r)$) -- ($(3.1,1)+(r)$);
		\draw[] ($(-3.1,0)+(r)$) -- ($(3.1,0)+(r)$);
		\draw[] ($(-3.1,1)+(r)$) -- ($(3.1,-1)+(r)$);
		\draw[] ($(-3.1,2)+(r)$) -- ($(3.1,-2)+(r)$);
		\draw[] ($(-3.1,3)+(r)$) -- ($(3.1,-3)+(r)$);
		
		\draw[] ($(-3,-3.1)+(r)$) -- ($(-3,3.1)+(r)$);
		\draw[] ($(-2,-3.1)+(r)$) -- ($(-2,3.1)+(r)$);
		\draw[] ($(-1,-3.1)+(r)$) -- ($(-1,3.1)+(r)$);
		\filldraw[gray] ($(0,0)+(r)$) circle (3pt);
		\draw[] ($(1,-3.1)+(r)$) -- ($(1,3.1)+(r)$);
		\draw[] ($(2,-3.1)+(r)$) -- ($(2,3.1)+(r)$);
		\draw[] ($(3,-3.1)+(r)$) -- ($(3,3.1)+(r)$);
		\end{tikzpicture}
	\end{center}
	is the union of $\AA^2 \setminus \{ b_1 = 0\}$ and the origin $\{(0,0)\}$ (cf.\ \Cref{example_plane_line_point}).
	The image is clearly neither closed, nor open, not even locally closed in the Zariski topology.
	However, it is constructible.
\end{exmp}	

For a constructible set $A$, we use the following two approximations to the boundary $\partial A$ to inductively approach the intricate details of $\partial A$, leading to efficient algorithms.

\begin{defn}[Relative boundary]
	The \textbf{relative boundary} $\underline{\partial}$ of $A \subseteq Y$ is defined by
	\begin{align*}
		\underline{\partial} A \coloneqq \overline{\overline{A} \setminus A} = \overline{\overline{A} \cap (Y \setminus A)} \subseteq \overline{A} \cap \overline{Y \setminus A} \eqqcolon \partial A.
	\end{align*}
	If $A$ is a nonempty set we call a closed set $D$ a \textbf{relative boundary hull} of $A\subseteq Y$ if
	\begin{align*}
		\overline{A} \setminus A \subseteq D \subsetneq \overline{A} \mbox{.}
	\end{align*}
	In other words, $D$ contains $\underline{\partial} A$ but does not contain $A$.
	We further define the empty set to be the relative boundary hull of itself.
\end{defn}

The notion of a relative boundary hull is introduced in \cite{ComputingImages} under the name ``frame''.

\begin{defn}[Locally closed part]
	The \textbf{locally closed part} $\operatorname{LCP}(A)$ of an arbitrary subset $A \subseteq Y$ is defined as
	\begin{align*}
		\operatorname{LCP}(A) \coloneqq \overline{A} \setminus \underline{\partial} A.
	\end{align*}
	If $D$ is a relative boundary hull of $A$ then we call $\overline{A} \setminus D$ a \textbf{locally closed approximation} of $A$, the locally closed part being the largest such approximation.
\end{defn}

In \Cref{example_image} the minimal relative boundary hull is the relative boundary $\{ b_1 = 0\}$.
Relative boundary hulls do not always exist, for example for  $A\coloneqq\Q\subset\R\eqqcolon Y$ equipped with the natural topology the equality $\underline{\partial}A=\overline{A}$ holds.
However, relative boundary hull, or equivalently locally closed approximations, exist for images of rational maps in the Zariski topology by \Cref{thm:lca_of_image}, which will be proved in \Cref{section_proof}.

\begin{defn}[Canonical form]\label{definition_canonical_form}
	If $A$ is a constructible set in the topological space $Y$, then the locally closed part $\operatorname{LCP}(A)$ is the largest locally closed subset of $A$.
	If moreover $Y$ is Noetherian, then the \textbf{canonical form} or \textbf{canonical decomposition} of a constructible set is the finite \emph{disjoint} union
	\begin{align*}
		A = \operatorname{LCP}(A) \uplus \operatorname{LCP}( A \setminus \operatorname{LCP}(A) ) \uplus \cdots\mbox{.}
	\end{align*}
\end{defn}

In \Cref{example_image} the locally closed part is $\AA^2 \setminus \{ b_1 = 0\}$, and the canonical form is
\[
  \AA^2 \setminus \{ b_1 = 0\}\uplus\{(0,0)\} \mbox{.}
\]

\begin{rmrk} \label{rmrk:canonical_decomp}
The canonical form is in an obvious sense the ``most exhaustive'' 
decomposition of a constructible set into locally closed subsets.
It is by definition an \emph{intrinsic} decomposition of a constructible set and as such of general interest independent of the context of this paper.
We will therefore defer its algorithmic treatment to a later paper, where we also show its finiteness.
We have already implemented all operations of the Boolean algebra of constructible objects in a locale including the computation of the canonical decomposition in the $\mathsf{GAP}$ package $\mathtt{Locales}$ \cite{Locales}.
The package $\mathtt{Locales}$ uses the philosophy of category constructors \cite{PosCCT} and relies on the \textsc{Cap} project which makes categories accessible to the computer \cite{GPSSyntax,GP_Rundbrief}.
\end{rmrk}

\begin{rmrk}
Let $A=\biguplus_{i=1}^n A_i\setminus B_i$ be the canonical decomposition of the constructible set $A$.
The tuple $(A_1, B_1, \ldots, A_n, B_n)$ of closed subsets is intrinsically associated to $A$.
Hence, any invariant of closed sets induces a $2n$-tuple of invariants when applied to the above tuple.
The dimension, the degree, and the Euler characteristic are such invariants.
An even finer invariant which incorporates all these three invariants is the Chern-Schwartz-MacPherson class  of an affine or projective variety \cite{PleskenCounting,EulerCharacteristicAlgebraicSets,Jos15,BaechlerPlesken,baechlerphd}.
\end{rmrk}

All known image algorithms can a priori only compute relative boundary hulls instead of relative boundaries for the yet unknown image, at least when the morphism has non-zero dimensional fibers.
This means that they can a priori only approximate the canonical decomposition by successive locally closed approximations (LCA, cf.\  Algorithm~\ref{alg:LCA})
\begin{align*}
	A = \operatorname{LCA}(A) \uplus \operatorname{LCA}( A \setminus \operatorname{LCA}(A) ) \uplus \cdots\mbox{.}
\end{align*} 
The result is thus a non-exhaustive decomposition of the constructible image\footnote{However, once the image is written in this way, the canonical decomposition can then be computed a posteriori (see \Cref{rmrk:canonical_decomp}).}.
More precisely:

The decomposition in Formula \eqref{eq:sum} approximates the canonical form by replacing the locally closed part $\operatorname{LCP}(f(C))$ by a locally closed approximation $\operatorname{LCA}(f(C))\coloneqq\overline{f(C)} \setminus D$ of the image $f(C)$, as described above.

The canonical form can be achieved in a single step in the case of algebraic group orbits, see \Cref{prop:Borel} and \Cref{coro_intersect_not_iterate}.

\section{Notational preliminaries}\label{section_algebraic_geometric}

In this section we fix the notation for the remaining paper, both geometrically and algebraically.
The remaining paper mostly uses the language of algebraic geometry.
Additionally, we sometimes use the algebraic language both to specify slight restrictions on the general geometric setting to allow for algorithms and to describe some algorithmic improvements.

To compute the image of a map $f: X \to Y$ of sets it is sufficient to compute the image of the \emph{graph} $\Gamma \coloneqq \Gamma_f \coloneqq \{ (f(x),x) \mid x \in X \} \subseteq Y \times X$ under the projection map $\pi: Y \times X \twoheadrightarrow Y$.
Equally general, one can consider the projection of a subset $\Gamma \subseteq Y \times X$ which is not necessarily the graph of a map.

In our affine setup we consider Zariski-closed subsets $X \subseteq \AA_k^n$ and $Y \subseteq \AA_k^m$ over a commutative coefficients ring $k$.
We write $\Spec B$ for either $\AA_k^m$ or $Y$.
In the first case $B$ is the polynomial algebra $k[b_1, \ldots, b_m]$ and in the second the affine $k$-algebra $k[b_1, \ldots, b_m] / \mathcal{I}(Y)$, where $\mathcal{I}(Y)$ denotes the vanishing ideal of $Y$.
Hence, both $\Gamma$ and $Y \times X$ are closed subsets of the relative affine space $\AA_B^n = \Spec B \times_k \AA_k^n = \Spec R$ for $R:=B[x_1,\ldots,x_n]$.
The above projection $\pi$ is then the restriction of the base-projection $\AA^n_B \twoheadrightarrow \Spec B$ to $Y \times X$.
If $X$ is reduced resp.\  irreducible, then so is $\Gamma_f$.
And if $\Gamma$ is reduced resp.\  irreducible, then so is $\overline{\pi(\Gamma)}$.

As manifested in our use of the $\Spec$-notation our topological spaces are the prime spectra of affine $k$-algebras.
As customary, this is one of the ways to avoid the assumption that $k$ is an algebraically closed field, which is far too restrictive for our intended applications.
However, when our algorithm reduces to the case that $k$ is a finite field we will occasionally pass to a finite extension of $k$ in order to guarantee the existence of certain hyperplanes.
Still, the image $\pi(\Gamma)$ will be a constructible subset of the topological space $\Spec B$, where $B$ is the original $k$-algebra.

Note that non-reduced structures play no role in our setup and we may assume all ideals to be radical, i.e., all affine varieties to be reduced.

Our goal is an algorithm, hence we assume that the rings $k$, $B$, and $R$ allow for Gröbner basis algorithms.
One general framework to allow for Gröbner bases is sketched in \Cref{appendix_computable_ring}, which allows for a wide range of rings $k$ like affine algebras over a constructive field or $\Z$.
We also refer to \Cref{appendix_computable_ring} for a discussion about rings over which our algorithm is possible, even though these rings do not allow Gröbner bases.
We treat Gröbner bases as black boxes, in particular we are agnostic to term orders, except for the occasional need of elimination orders, and do not comment on possible optimization of Gröbner bases for input of certain types.
For computational efficiency we also assume the existence of a primary decomposition algorithm, even though this is not strictly necessary (cf.\ \Cref{section_no_primary}).

If 
\begin{align*}
	f=\frac{p}{q}:\begin{cases}
		X &\to Y\\
		(x_1,\ldots,x_n) &\mapsto (b_i)\coloneqq (f_i(x_1,\ldots,x_n)) = \left(\frac{p_i(x_1,\ldots,x_n)}{q_i(x_1,\ldots,x_n)}\right)
	\end{cases}
\end{align*}
is a rational map\footnote{For rational maps we assume $k$ to be a field.} (or a polynomial map for $q_i\equiv1$), then its graph $\Gamma_f$ is given by the ideal
\begin{align*}
	I\coloneqq\langle q_i(x_1,\ldots,x_n)b_i-p_i(x_1,\ldots,x_n)|i=1,\ldots,m\rangle\unlhd R.
\end{align*}

Encoding maps in projections we can reformulate \cref{thm:lca_of_image} and \Cref{thm:Chevalley} as follows:

\begin{theorem} \label{thm:lca_of_projection}
	$\pi(\Gamma)$ admits a relative boundary hull, or equivalently, a locally closed approximation.
\end{theorem}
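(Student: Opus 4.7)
The plan is to build a relative boundary hull $D$ by the projective-closure recipe sketched just before the statement. First, replace $\Gamma$ by a closed subset $\Gamma_0 \subseteq \Gamma$ with $\overline{\pi(\Gamma_0)} = \overline{\pi(\Gamma)}$ whose restriction $\pi_{|\Gamma_0}$ has generically zero-dimensional fibers over each irreducible component of $\overline{\pi(\Gamma)}$. Conceptually, one decomposes $\Gamma$ into irreducible components and intersects each with a sufficiently generic family of linear hyperplanes in the fiber coordinates $x_1,\ldots,x_n$, as many hyperplanes as the generic fiber dimension of that component; over a small base field $k$ one first passes to a finite extension so that such hyperplanes exist. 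Genericity preserves both the image closure of each component and the original dimension of that image closure, while forcing the fibers down to finite sets.

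Next, form the projective closure $\widehat{\Gamma}_0 \subseteq \PP^n_Y$, set $\Gamma_0^\infty \coloneqq \widehat{\Gamma}_0 \cap H$ with $H \coloneqq \PP^n_Y \setminus \AA^n_Y$, and define $D \coloneqq \widehat{\pi}(\Gamma_0^\infty)$. Since the projection $\widehat{\pi}\colon \PP^n_Y \twoheadrightarrow Y$ from projective space is closed, $D$ is closed in $Y$, and moreover $\widehat{\pi}(\widehat{\Gamma}_0) = \overline{\pi(\Gamma_0)} = \overline{\pi(\Gamma)}$. For the inclusion $\overline{\pi(\Gamma)} \setminus \pi(\Gamma) \subseteq D$: any $y$ in that difference admits a preimage $p \in \widehat{\Gamma}_0$; the assumption $y \notin \pi(\Gamma) \supseteq \pi(\Gamma_0)$ forces $\pi^{-1}(y) \cap \Gamma_0 = \emptyset$, so $p$ must lie in $H$, hence $p \in \Gamma_0^\infty$ and $y \in D$.

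For the properness $D \subsetneq \overline{\pi(\Gamma)}$, argue componentwise. Fix an irreducible component $C$ of $\overline{\pi(\Gamma)}$. By construction, some irreducible component $W$ of $\Gamma_0$ dominates $C$ with generically zero-dimensional fibers, hence $\dim W = \dim C$. Its projective closure $\widehat{W}$ is irreducible and not contained in the hyperplane $H$ (the affine part $W$ is dense in $\widehat{W}$), so Krull's principal ideal theorem gives $\dim(\widehat{W} \cap H) \leq \dim C - 1$, and therefore $\widehat{\pi}(\widehat{W} \cap H)$ is a closed subset of $C$ of strictly smaller dimension than $C$. Every other irreducible component of $\Gamma_0$ projects either into a different component of $\overline{\pi(\Gamma)}$ or into a proper closed subset of $C$, so its contribution to $D \cap C$ is again contained in a proper closed subset of $C$. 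Thus $D \cap C$ is a finite union of proper closed subsets of the irreducible space $C$, whence $D \cap C \subsetneq C$; as this holds for every component, $D \subsetneq \overline{\pi(\Gamma)}$.

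The main obstacle is the reduction $\Gamma \leadsto \Gamma_0$. Making it rigorous requires a Noether-normalization-flavoured argument showing that sufficiently generic hyperplane cuts preserve each component of $\overline{\pi(\Gamma)}$ while forcing generic fibers to be zero-dimensional. Making it effective---particularly over a finite coefficient ring $k$---adds the further task of algorithmically producing such hyperplanes after a controlled base extension; this is presumably the content of the Gröbner-basis analysis of \Cref{section_exist_planes}.
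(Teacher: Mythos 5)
Your proposal is correct and follows essentially the same route as the paper's own proof in \Cref{section_proof}: reduce to generically zero-dimensional fibers via generic hyperplane sections (componentwise, after irreducible decomposition), take the projective closure, and use closedness of $\widehat{\pi}$ to show that $\widehat{\pi}(\Gamma_0^\infty)$ contains $\overline{\pi(\Gamma)}\setminus\pi(\Gamma)$ yet is proper. Your explicit componentwise dimension count for properness fleshes out what the paper leaves implicit, and your deferral of the rigorous hyperplane-existence argument matches the paper, which proves it in \Cref{section_exist_planes} via comprehensive Gröbner bases.
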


\begin{theorem}[Chevalley] \label{thm:Chevalley_projection}
	$\pi(\Gamma)$ is a constructible set.
\end{theorem}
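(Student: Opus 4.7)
The plan is to iterate the locally closed approximation guaranteed by \Cref{thm:lca_of_projection} and to let Noetherian induction on the closed subsets of $\Spec B$ terminate the process. Given the closed subset $\Gamma\subseteq\AA_B^n$, the theorem will follow by showing that $\pi(\Gamma)$ admits a finite disjoint decomposition into locally closed pieces.

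First I would apply \Cref{thm:lca_of_projection} to obtain a closed subset $D\subseteq\Spec B$ that is a relative boundary hull of $\pi(\Gamma)$, so $\overline{\pi(\Gamma)}\setminus\pi(\Gamma)\subseteq D\subsetneq\overline{\pi(\Gamma)}$. Setting $U\coloneqq\overline{\pi(\Gamma)}\setminus D$, this inclusion is equivalent to $U\subseteq\pi(\Gamma)$, and $U$ is locally closed as the difference of two closed subsets of $\Spec B$. The set-theoretic projection formula $\pi(\pi^{-1}(D)\cap\Gamma)=D\cap\pi(\Gamma)$ combined with the partition $\overline{\pi(\Gamma)}=U\uplus D$ intersected with $\pi(\Gamma)$ then yields
\[
  \pi(\Gamma)\;=\;U\;\uplus\;\pi(\pi^{-1}(D)\cap\Gamma)\mbox{,}
\]
which is exactly the decomposition \eqref{eq:sum}. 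The first summand is locally closed, and the second is the projection of the closed subset $\Gamma'\coloneqq\pi^{-1}(D)\cap\Gamma\subseteq\AA_B^n$, so the procedure can be recursively applied to $\Gamma'$ in place of $\Gamma$.

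The iteration terminates because $\overline{\pi(\Gamma')}\subseteq D\subsetneq\overline{\pi(\Gamma)}$, so the closures of the successive images form a strictly descending chain of closed subsets of the Noetherian space $\Spec B$. After finitely many steps this chain reaches the empty set, and concatenating the locally closed pieces collected along the way exhibits $\pi(\Gamma)$ as a finite disjoint union of locally closed subsets of $\Spec B$, hence as a constructible set.

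The only genuinely non-formal ingredient of this argument is the existence of the relative boundary hull $D$, i.e.\ \Cref{thm:lca_of_projection} itself; the Chevalley deduction sketched above is essentially a Noetherian-induction wrapper around it. Accordingly I expect the main obstacle not to lie in this proof but in the construction of $D$, which is the content of \Cref{section_proof} and rests on the dimension reduction $\Gamma\leadsto\Gamma_0$ together with the analysis of the points at infinity $\Gamma_0^\infty=\widehat\Gamma_0\cap H$ under the closed projection $\widehat\pi:\PP_Y^n\twoheadrightarrow Y$.
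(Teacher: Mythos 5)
Your proposal is correct and follows essentially the same route as the paper: it is exactly the iteration of Formula \eqref{eq:sum} from the Introduction (formalized there as Algorithm~\ref{alg:iteration}), using the relative boundary hull from \Cref{thm:lca_of_projection} and terminating by Noetherianity via the strictly descending chain $\overline{\pi(\Gamma')}\subseteq D\subsetneq\overline{\pi(\Gamma)}$ of closed subsets of $\Spec B$. Your closing observation that all the substance lies in constructing $D$ matches the paper's own view of where the work is.
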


The proof of \Cref{thm:Chevalley_projection} is again a reformulation of the proof of \Cref{thm:Chevalley} given in the Introduction.
We present it in form of the following algorithm, which we express geometrically as Algorithm~\ref{alg:iteration} and, with corresponding lines, algebraically as Algorithm~\ref{alg:iteration_algebraic}.
Note that the used sub-algorithm \textbf{LocallyClosedApproximationOfProjection} depends on the approach used to compute the relative boundary hull:
\begin{itemize}
  \item Our new approach is expressed geometrically as Algorithm~\ref{alg:LCA} and algebraically as Algorithm~\ref{alg:LCA_algebraic}, where the respective algorithms have matching line numbers to indicated the correspondence between geometry and algebra.
  \item Algorithm~\ref{alg:Kemper} \cite[Alg.~10.3]{kemper2010course} uses Gröbner bases to construct a relative boundary hull $D$ where $\Gamma$ has free fibers on the complement $U = \overline{\pi(\Gamma)} \setminus D$.
\end{itemize}
\begin{center}
\begin{algorithm}[H]
        \SetKwIF{If}{ElseIf}{Else}{if}{then}{elif}{else}{}%
        \DontPrintSemicolon
        \SetKwProg{ConstructibleProjection}{ConstructibleProjection}{}{}
        \LinesNotNumbered
        \KwIn{
        A closed subset $\Gamma \subseteq \AA_B^n \xtwoheadrightarrow{\pi} \Spec B$
        }
        \KwOut{
        $\pi(\Gamma) = C$ as a finite \emph{disjoint} union of locally closed subsets of $\Spec B$
        }
        \ConstructibleProjection(){($\Gamma$)}{
                \nl $C \coloneqq \emptyset$\;
                \nl \While{$\Gamma \neq \emptyset$}{
                \nl $A, D \coloneqq \textbf{LocallyClosedApproximationOfProjection}(\Gamma)$ \tcp*{$A \coloneqq \overline{\pi(\Gamma)}$}
                \nl $C \coloneqq C \,\uplus (A \setminus D)$\;
                \nl $\Gamma \coloneqq \Gamma \cap \pi^{-1}(D)$\;\label{alg:iteration:5}
                }
                \nl \Return $C$\;
        }
\caption{Outer induction computing a constructible projection (geometric)}
\label{alg:iteration}
\end{algorithm}
\end{center}

\begin{center}
	\begin{algorithm}[H]
		\SetKwIF{If}{ElseIf}{Else}{if}{then}{elif}{else}{}%
		\DontPrintSemicolon
		\SetKwProg{ConstructibleProjection}{ConstructibleProjection}{}{}
		\LinesNotNumbered
		\KwIn{
		$I \unlhd B[x_1,\ldots,x_n]$ with $\Gamma = V(I) \subseteq \AA_B^n \xtwoheadrightarrow{\pi} \Spec B$
		}
		\KwOut{
		A finite list $C \coloneqq \left((I_1, J_1), (I_2, J_2), \ldots \right)$ of pairs of ideals in $B$ such that $\pi(\Gamma) = \bigcup \left(V(I_i) \setminus V(J_i)\right)$
		}
		\ConstructibleProjection(){($I$)}{
			\nl $C \coloneqq ()$\;
			\nl \While{$1 \notin I$}{
				\nl $I_A, I_D \coloneqq \textbf{LocallyClosedApproximationOfProjection}(I)$\;
				\nl $\mathtt{Add}(C,(I_A, I_D))$\;
				\nl $I \coloneqq I + \langle I_D \rangle_{B[x_1,\ldots,x_n]}$\;
			}
			\nl \Return $C$\;
		}
		\caption{Outer induction computing a constructible projection (algebraic)}
		\label{alg:iteration_algebraic}
	\end{algorithm}
\end{center}

\begin{rmrk}
  All ideals appearing in the algebraic versions of the algorithms (Algorithms~\ref{alg:iteration_algebraic}, \ref{alg:LCA_algebraic}, and \ref{alg:Kemper}) are given by a finite set of generators.
  This is by specification for the input ideals and by construction for all ideals computed within the algorithms.
  It is well-known how to use Gröbner bases and syzygies in order to translate operations like ideal membership test, ideal sum, and ideal quotient into operations involving the corresponding finite sets of generators.
\end{rmrk}

\begin{rmrk} \label{rmrk:decompose_rbh}
  In practice it is extremely useful to exploit known decompositions of the relative boundary hull $D$, and occasionally even to compute a complete decomposition in irreducibles components.
  To this end, the simple linear structure of iteratively constructing $C$ in Algorithm~\ref{alg:iteration} will be replaced by the directed graph structure in Algorithm~\ref{alg:iteration_graph} which we will develop in \Cref{sec:improve_alg_iteration}.
\end{rmrk}

The more general version of Chevalley's \Cref{thm:Chevalley_general} can now be easily and constructively derived from \Cref{thm:Chevalley_projection}:

\subsection{Proof of \Cref{thm:Chevalley_general}}\label{proof_Chevalley_general}

If $\Gamma$ is constructible as the finite union of locally closed subsets $\Gamma_i$, then $\pi(\Gamma) = \bigcup_i \pi(\Gamma_i)$.
Using a generalized version of Rabinowitsch's trick, each locally closed $\Gamma_i \subseteq Z = Y \times X$ can replaced by a so-called \emph{Rabinowitsch cover} which is a \emph{closed} subset in $\Gamma_i^\mathrm{rab} \subseteq Z \times \AA^1 \xrightarrow{\pi'} Z$ with $\pi'\left( \Gamma_i^\mathrm{rab} \right) = \Gamma_i$ (cf.\ \Cref{exmp:Rabinowitsch:exception} in \Cref{exmp:Rabinowitsch}).
The image $\pi(\Gamma_i)$ can now be computed as the image of $\Gamma_i^\mathrm{rab}$ under the composed projection $\pi \circ \pi': Y \times X \times \AA^1 \to Y$.
\qed

\subsection{Computability of rings and Gröbner bases}\label{appendix_computable_ring}

In this paper, we will explicitly or implicitly assume that the three rings $k$, $B \coloneqq k[b_1, \ldots, b_m] / \mathcal{I}(Y)$, and $R \coloneqq B[x_1, \ldots, x_n]$ allow for Gröbner bases.
This will be necessary to compute elimination ideals in Algorithm~\ref{alg:LCA_algebraic} or in various places to decide ideal membership, compute ideal quotients and saturations.

Gröbner bases exist over many rings.
One common construction assumes that $k$ is a ring with \textbf{effective coset representatives} \cite[\S 4.3]{AL}, i.e., if for every ideal $J\unlhd k$ in the commutative coefficients ring $k$ we can determine a set $T$ of coset representatives of $k/J$, such that for every $a \in k$ we can compute a unique $t \in T$ with $a+J=t+J$.
Many rings have this property, e.g., constructive fields and $\Z$.
Furthermore, if $k$ has effective coset representatives, then affine rings over $k$ have effective coset representatives as well.
Hence, if $k$ has effective coset representatives, then there exist Gröbner bases in $B \coloneqq k[b_1, \ldots, b_m] / \mathcal{I}(Y)$ and therefore also in $R \coloneqq B[x_1, \ldots, x_n]$.
In \Cref{section_exist_planes} we also require the decomposition of $k$ into a finite product of domains $k=k_1 \times \cdots \times k_r$ to be constructive.

Theoretically, we could even apply our algorithm over rings without Gröbner bases, as it suffices to assume computability for the rings $k$, $B$, and $R$.
We call a (unital) commutative ring $R$ \textbf{computable} \cite{BL} if there exists an algorithm to solve a linear system over $R$, i.e., to find an (affine) generating set of all $X$ with $b=XA$ for given matrices $A$ and $b$ over $R$.
Of course, any ring with Gröbner bases is computable, but also their residue class rings and certain localizations thereof \cite{BL, BL_ACM, PosLinSys}.
Computability obviously allows to decide ideal membership (a particular solution of a linear system) and to compute ideal quotients and saturations (solutions of homogeneous systems), but also to compute elimination ideals \cite{BL_Elimination}.

According to our experience a primary decomposition algorithm is beneficial for computational efficiency, but otherwise not necessary (cf.\ \Cref{section_no_primary}).

\section{Existence of locally closed approximations and relative boundary hulls}\label{section_proof}

In this section, we present our alternative proof of \Cref{thm:lca_of_projection}, i.e., the existence of locally closed approximations of images of projections or, equivalently, of relative boundary hulls.
We first present a sketch of a purely geometric algorithm for our construction and then explain the algorithm in detail in \Cref{subsection_algebraic_dimension,subsection_relative_boundary_hull}.

\textbf{Idea 1: Take the projective closure of each fiber.}
To understand the origin of the superfluous points $\overline{\pi(\Gamma)} \setminus \pi(\Gamma)$, we ``add the points at infinity'', i.e., we replace $\Gamma \subseteq \AA^n_B \subsetneq \PP^n_B$ ($n>0$) by its Zariski closure $\widehat{\Gamma}$ in $\PP_B^n$ and replace $\pi:\AA^n_B \twoheadrightarrow \Spec B$ by the extended projection $\widehat{\pi}: \PP_B^n \twoheadrightarrow \Spec B$.
We call $\widehat{\Gamma} \subseteq \PP_B^n$ the \textbf{projective closure} of $\Gamma$.
On the one hand $\widehat{\pi}$ is a closed morphism by the \emph{main theorem of elimination theory}, hence the image of the closed subset $\widehat{\Gamma}$ is closed.
One the other hand $\widehat{\pi}(\widehat{\Gamma}) = \overline{\widehat{\pi}(\Gamma)} = \overline{\pi(\Gamma)}$ due to the continuity of $\widehat{\pi}$ and the fact that $\widehat{\Gamma}$ is the closure of $\Gamma$ as a subset of $\PP_B^n$.
It immediately follows that
\begin{equation} \label{eq:incl} \tag{*}
  \underbrace{\overline{\pi(\Gamma)} \setminus \pi(\Gamma)}_{\text{extra points}} \subseteq \underbrace{\widehat{\pi}(\overbrace{\widehat{\Gamma} \setminus \Gamma}^{\eqqcolon \Gamma^\infty})}_{\text{closed}} \subseteq \overline{\pi(\Gamma)} \mbox{,}
\end{equation}
where $\Gamma^\infty \coloneqq \widehat{\Gamma} \setminus \Gamma = \widehat{\Gamma} \cap H$ is the closed set of points of $\widehat{\Gamma}$ at infinity and $H \coloneqq \PP^n_B \setminus \AA^n_B$ is the hyperplane at infinity.

\begin{exmp}\label{example_hyperbola_geometric}
	Consider the hyperbola $\Gamma=\Gamma_0\coloneqq\V(bx-1)$ in $\AA_B^1$ over $B=\Q[b]$.
	\begin{center}
		\begin{tikzpicture}[scale=0.3]
		\draw plot [
		blue,
		samples=100,
		domain=-5:-0.2
		] (\x,{1/\x});
		\draw plot [
		samples=100,
		domain=0.2:5
		] (\x,{1/\x});
		\draw plot [
		samples=100,
		domain=-5:-0.1
		] (\x,{0});
		\draw[-stealth'] (0.1,0) -- (5.4,0) node[below]{\footnotesize$b$};
		\draw (0,0) circle (3pt);
		\end{tikzpicture}
	\end{center}
	The image under the projection $\pi:\Spec(\AA_B^1) \twoheadrightarrow \Spec(B)$ is $\Spec(B)\setminus\{0\}$.
	When approaching the missing point $0\in \overline{\pi(\Gamma)} \setminus \pi(\Gamma)$ in the base, the hyperbola goes to ``infinity'' in the fiber.
\end{exmp}

\textbf{Idea 2: Approximate the image in its closure by computing a relative boundary hull\footnote{The construction of this relative boundary hull is closely connected to our results in \cite{BL_Elimination}, which show that saturations and eliminations are equivalent in a certain sense.}.}
Our goal is to determine $\pi(\Gamma)$ as the difference of the right and the left hand side in \Cref{eq:incl}.
In the optimal case where the first inclusion is an equality, the image is the locally closed subset $\pi(\Gamma) = \overline{\pi(\Gamma)} \setminus \widehat{\pi}(\Gamma^\infty)$.
This includes $\widehat{\pi}(\Gamma^\infty) = \emptyset$ as special instance where the image is closed.
In case the first inclusion in \eqref{eq:incl} is strict then an approximation of the superfluous points $\overline{\pi(\Gamma)} \setminus \pi(\Gamma)$ would require the second inclusion to be strict as well.
In other words, in all cases we need $\widehat{\pi}(\Gamma^\infty)$ to be a relative boundary hull of $\pi(\Gamma)$.

It turns out that Idea 2 is not enough to compute a relative boundary hull, since $\widehat{\pi}(\Gamma^\infty)$ might be all of $\overline{\pi(\Gamma)}$:

\begin{exmp}\label{example_hyperbola2_geometric}
	Reconsider the hyperbola from \Cref{example_hyperbola_geometric} in one fiber-dimension higher, i.e., $\Gamma = V( \langle b x_1 - 1 \rangle ) \subseteq \AA^2_{k[b]}$.
	In this case every nonempty fiber of $\Gamma$ along the projection $\pi: (b, x_1, x_2) \mapsto b$ is $1$-dimensional and the projective closure $\widehat{\Gamma}$ contains for each nonempty fiber a new point at infinity (cf.\ \Cref{example_hyperbola2} for a detailed computation).
	It follows that the second inclusion in \eqref{eq:incl} is an equality and $\widehat{\pi}(\Gamma^\infty)$ is not a relative boundary hull for $\pi(\Gamma)$.
\end{exmp}

\textbf{Idea 3: Make the fibers zero-dimensional.}
To find a suitable relative boundary hull in \Cref{example_hyperbola2_geometric} we need to recover the setup of the original hyperbola, in which the dimensions of the nonempty fibers were (generically) $0$-dimensional.
This can be achieved by replacing $\Gamma$ with the intersection $\Gamma_0 \coloneqq \Gamma \cap L$, where $L \subseteq \AA_B^{n}$ is an affine subspace of appropriate dimension such that the restriction of $\pi$ to $\Gamma_0$ has generically $0$-dimensional fibers but still $\overline{\pi(\Gamma_0)} = \overline{\pi(\Gamma)}$, set-theoretically\footnote{The equality is not a scheme-theoretic equality.}.
Again denoting by $\Gamma_0^\infty \coloneqq \widehat{\Gamma}_0 \cap H$, the above guarantees the strict inclusion
\[
  \overline{\pi(\Gamma)} \setminus \pi(\Gamma) \subseteq \widehat{\pi}(\Gamma_0^\infty) \subsetneq \overline{\pi(\Gamma)}
\]
and $\widehat{\pi}(\Gamma_0^\infty)$ is now a relative boundary hull of the constructible set $\pi(\Gamma)$.
In other words
\[
  \overline{\pi(\Gamma_0)} \setminus \widehat{\pi}(\Gamma_0^\infty)
\]
is a locally closed approximation of $\pi(\Gamma)$.
It can be shown that one can choose $L$ to be constant over the base $\Spec B$.

\textbf{Idea 4: Make $\Gamma$ irreducible.}
The affine subspace $L$ as described above does not necessarily exist if $\Gamma$ has two components of different dimensions.
In that case, we decompose $\Gamma=\Gamma_1\cup\ldots\cup\Gamma_c$ into its irreducible components and intersect each component $\Gamma_i$ independently with an $L_i$.
For computational efficiency, we try to prevent computing a primary decomposition, if possible, by using heuristics (cf.\ \Cref{subsection_algebraic_dimension}).
A complete avoidance of primary decompositions is possible (cf.\ \Cref{section_no_primary}), but not algorithmically prudent.

\subsection{Achieving zero-dimensional fibers}\label{subsection_algebraic_dimension}

In this subsection, we explicitly describe an algorithm to replace the closed set $\Gamma\subseteq \AA_B^n$ by a suitable closed subset having locally zero-dimensional fibers under the projection $\pi:\AA_B^n \twoheadrightarrow \Spec B$, cf.\ Algorithm~\ref{algorithm_zero_dimensional_fibers_heuristic}.
More specifically, we compute
\begin{enumerate}[(i)]
	\item a closed subset $\Gamma_0 \subseteq \Gamma$, such that \label{condition_0_0}
	\item $\Gamma_0$ has locally $0$-dimensional $\pi$-fibers, i.e., over an open set of a component of $\overline{\pi(\Gamma)}$, but still \label{condition_0_1}
	\item $\overline{\pi(\Gamma_0)} = \overline{\pi(\Gamma)}$, i.e., without altering the projection closure. \label{condition_0_2}
\end{enumerate}

The construction of $\Gamma_0$ works as follows.
Assume $\Gamma\subseteq \AA_B^n$ irreducible.
Denote by $d=\dim(\Gamma)-\dim(\overline{\pi(\Gamma)})$, the generic dimension of the fibers of $\pi_{|\Gamma}$.
For a generic affine subspace $L\subseteq\AA_B^n$ of dimension $n-d$ define $\Gamma_0\coloneqq\Gamma\cap L$.
For a generic $p\in\pi(\Gamma)$ it holds that $\dim(\pi_{|\Gamma_0}^{-1}(\{p\})))=0$, since the codimensions of $L$ and $\pi_{|\Gamma}^{-1}(\{p\})$ sum up to $n$.
A formal proof that such an $L$ can be chosen generically is given in \Cref{section_exist_planes}.
The proof is based on Gröbner bases.

In practice, we not do intersect $\Gamma$ with a high codimension $n-d$ subspace $L$, but rather we intersect $\Gamma$ iteratively with $n-d$ hyperplanes.
The choice of these hyperplanes is a Las Vegas algorithm, as we can check whether the choice of the hyperplane is suitable w.r.t.\ assumption \ref{condition_0_2}, i.e., not reducing the image.
For this Las Vegas algorithm, any choice of hyperplane on a Zariski open and dense set is suitable (cf.\ \Cref{section_exist_planes}).

\begin{rmrk}\label{rmrk_efficiency}
	Furthermore, the above construction assumed $\Gamma$ irreducible, i.e., if $\Gamma$ is reducible we should compute its irreducible components and treat those independently.
	However, Algorithm~\ref{algorithm_zero_dimensional_fibers_heuristic} uses two improvements for computational efficiency:
	\begin{itemize}
	  \item it applies heuristics before (and often instead of) the primary decomposition, and
	  \item we do not compute the primary decomposition\footnote{This speed-up conforms to theoretical considerations: degree bounds for Gröbner bases are double exponential in its codimension \cite{mayrritscher}, hence reducing this codimension should drastically reduce the computational complexity.} of $\Gamma$, but of its intersection with hyperplanes (cf.\ $\Gamma_0'$ in line~\ref{algorithm_zero_dimensional_fibers_heuristic_decomposition} of Algorithm~\ref{algorithm_zero_dimensional_fibers_heuristic}).
	\end{itemize}
\end{rmrk}

Only a generic hyperplane $E$ is suitable for an intersection with $\Gamma$ such that the three assumptions above can be achieved.
If the intersection $\Gamma\cap E$ is not of smaller dimension than $\Gamma$, then we need to try a new hyperplane $E$.
If the intersection leads to a smaller image closure, i.e., $\overline{\pi(\Gamma\cap E)}\subsetneq\overline{\pi(\Gamma)}$, the hyperplane $E$ might lead to a decomposition of $\Gamma$.
The first kind of such a decomposition is that the decreasing image leads to a split in the base, i.e., $\overline{\pi(\Gamma\cap E)}$ is the union of components of $\overline{\pi(\Gamma)}$, cf.\ \Cref{exmp_split_in_base_space}.
A second kind of such a decomposition\footnote{For reasons of computationally efficiency, we do not use this second decomposition, but it underlies the complete avoidance of primary decompositions in \Cref{section_no_primary}.} is that the preimage of the image closure leads to a split of $\Gamma$, i.e., $\pi^{-1}(\overline{\pi(\Gamma\cap E)})\cap\Gamma$ is the union of components of $\Gamma$, cf.\ \Cref{exmp_split_in_total_space}.
Finally, if $E$ does not lead to such a decomposition, we discard it as unsuitable, cf.\ \Cref{exmp_no_split}.

\begin{exmp}\label{exmp_split_in_base_space}
	Consider $\Gamma=\{b_1x=0,b_1b_2=0\}\subseteq \AA_{k[b_1,b_2]}^1$ of dimension 2.
	Taking a hyperplane $E=\{x=1\}$ leads to $\Gamma\cap E=\{b_1=0,x=1\}$ of dimension 1.
	However, $\gamma\coloneqq\pi(\Gamma)=\{b_1b_2=0\}$, whereas $\gamma_1\coloneqq\pi(\Gamma\cap E)=\{b_1=0\}$, hence we decrease the image contrary to assumption \ref{condition_0_2}.
	Luckily, $\gamma_2\coloneqq\overline{\gamma\setminus\gamma_1}$ leads to a decomposition of the image, which induces the decomposition $\Gamma=\Gamma_1\cup\Gamma_2$ for $\Gamma_i=\pi^{-1}\left(\gamma_i \right)\cap\Gamma$.
\end{exmp}

\begin{exmp}\label{exmp_split_in_total_space}
	Consider $\Gamma=\{bx=0\}\subseteq \AA_{k[b]}^1$ of dimension 1.
	Taking a random hyperplane $E=\{x=a\}$ for $0\not=a\in k$ leads to $\Gamma\cap E=\{x=a,b=0\}$ of dimension 0.
	However, $\pi(\Gamma)=\Spec k[b]$, whereas $\pi(\Gamma\cap E)=\{b=0\}\subseteq\Spec k[b]$, hence we decrease the image contrary to assumption \ref{condition_0_2}.
	Luckily, $\Gamma_1\coloneqq\pi^{-1}\left(\pi(\Gamma\cap E)\right)\cap\Gamma=\{b=0\}\subseteq\AA_{k[b]}^1$ is a component\footnote{This example shows that we would need a full decomposition into irreducible factors, an equidimensional decomposition does not suffice.} of $\Gamma=\Gamma_1\cup\Gamma_2$ for $\Gamma_2=\overline{\Gamma\setminus\Gamma_1}=\{x=0\}$.
\end{exmp}

\begin{exmp}\label{exmp_no_split}
	Consider $\Gamma=\{x_1-b=0\}\subseteq \AA_{k[b]}^2$ of dimension 2.
	Taking a random hyperplane $E=\{x_1=a\}$ for $a\in k$ leads to $\Gamma\cap E=\{x_1=b=a\}$ of dimension 1.
	However, $\pi(\Gamma)=\Spec k[b]$, whereas $\pi(\Gamma\cap E)=\{b=a\}\subseteq\Spec k[b]$, hence we decrease the image contrary to assumption \ref{condition_0_2}.
	Here, we just took an unlucky hyperplane, but the hyperplane $E=\{x_2=0\}$ works.
\end{exmp}

We formalize the above approach in Algorithm~\ref{algorithm_zero_dimensional_fibers_heuristic} under the assumption that the coefficients ring $k$ is an infinite domain and that the composition $\Gamma \to \Spec B \to \Spec k$ is dominant.
The general case will be reduced to this one in \Cref{section_exist_planes}.

\begin{algorithm}[H]
	\SetKwIF{If}{ElseIf}{Else}{if}{then}{elif}{else}{}%
	\DontPrintSemicolon
	\SetKwProg{ZeroDimensionalFibers}{ZeroDimensionalFibers}{}{}
	\LinesNotNumbered
	\KwIn{
		A closed subset $\Gamma \subseteq \AA_B^n \xtwoheadrightarrow{\pi} \Spec B$.
		We assume $k$ to be an infinite domain and that the composition $\Gamma \to \Spec B \to \Spec k$ is dominant.
	}
	\KwOut{
		A closed affine subset $\Gamma_0'\subseteq\Gamma$ and a (possibly empty) list of additional closed affine subsets $[\Gamma_1',\ldots,\Gamma_e']$ defining a closed affine subset $\Gamma_0 \coloneqq \Gamma_0'\cup \Gamma_1'\cup\ldots\cup \Gamma_e' \subseteq \Gamma$, such that \ref{condition_0_1} and \ref{condition_0_2} are satisfied; more specifically, the fibers in \ref{condition_0_1} are locally zero-dimensional over the subset $\Gamma_0'$ of $\Gamma_0$.
	}
	\ZeroDimensionalFibers(){($\Gamma$)}{
		\nl  $\Gamma_0' \coloneqq \Gamma$\tcp*{candidate $\Gamma_0$, dimension to be decreased below}
		\nl  $s \coloneqq 1$\tcp*{step counter}
		\nl  $\ell \coloneqq [\,]$\tcp*{list of additional components}
		\tcc{decrease the dimension in the fiber, till it is zero:}
		\nl  \While{$\dim(\Gamma_0')-\dim(\pi(\Gamma_0'))>0$}{
			\nl  $E \coloneqq \operatorname{RandomHyperplane}(\AA^n_k)\times_{\Spec k}\Spec B$\tcp*{cf.\ \Cref{rmrk_random_hyperplane}} \label{algorithm_zero_dimensional_fibers_heuristic_random_hyperplane}
			\nl  $\Gamma_0''\coloneqq \Gamma_0'\cap E$\tcp*{intersect with hyperplane}
			\nl  \If(\tcp*[f]{intersection decreases dimension\ldots}){$\dim(\Gamma_0'')<\dim(\Gamma_0')$}{
				\nl  \If(\tcp*[f]{\ldots without reducing the image}) {$\overline{\pi(\Gamma_0')}\subseteq \overline{\pi(\Gamma_0'')}$}{
					\nl $\Gamma_0' \coloneqq \Gamma_0''$\tcp*{found smaller $\Gamma_0'$}
				}
			\nl  \ElseIf(\tcp*[f]{split w.r.t.\ reduced image, avoid early}){$s>n$}{ \label{algorithm_zero_dimensional_fibers_heuristic:10}
				\tcc{try splitting $\Gamma_0'$ by splitting the base space:}
				\nl  $\Delta\coloneqq \overline{\overline{\pi(\Gamma_0')}\setminus\overline{\pi(\Gamma_0'')}}$\tcp*{``complement'' of the reduced image}
				\nl  \If(\tcp*[f]{if the image reduces nontrivially}){$\overline{\pi(\Gamma_0')}\not\subseteq\Delta$}{
					\nl  $\Gamma_0' \coloneqq \pi^{-1}\left(\overline{\pi\left(\Gamma_0''\right)}\right)\cap\Gamma$\tcp*{continue with one part}
					\nl  $\ell\coloneqq \operatorname{Add}(\ell,\pi^{-1}(\Delta)\cap\Gamma)$\tcp*{store the other half}
					}
					\ElseIf(\tcp*[f]{very expensive, avoid early}){$s>4n$}{
						\tcc{find components:}
						\nl $P \coloneqq \operatorname{Decomposition}(\Gamma_0')$\tcp*{in irreducibles}\label{algorithm_zero_dimensional_fibers_heuristic_decomposition}
						\nl $\Gamma_0' \coloneqq P_1$\tcp*{continue with first component}
						\nl $\ell \coloneqq \ell\cup P\setminus\{\Gamma_0'\}$\tcp*{store remaining components}\label{algorithm_zero_dimensional_fibers_heuristic:17}
					}
				}
			}
			\nl  $s\coloneqq s + 1$\tcp*{increase number of steps}
		}
		\nl  \Return{$[\Gamma_0',\ell]$}
	}
	\caption{ZeroDimensionalFibers \label{algorithm_zero_dimensional_fibers_heuristic}}
\end{algorithm}

Finding an affine subset with small coefficients is of vital importance for computational efficiency: ``complicated'' hyperplanes not only impede the current step, but might also lead to ``complicated'' relative boundary hulls which massively hamper subsequent steps.

\begin{rmrk}\label{rmrk_random_hyperplane}
	An efficient implementation can choose the hyperplanes in line~\ref{algorithm_zero_dimensional_fibers_heuristic_random_hyperplane} by the following heuristic approach.
	First of all, take the affine hyperplane $E$ to be constant over the base $\Spec B$.
	The first $n$ hyperplanes we test are $\{x_i=0\}$, $1\le i\le n$.
	Then, try several hyperplanes of the form $\{x_i-a\}$ for ``small'' elements $a\in k$.
	Only if this fails, one could entertain hyperplanes of the form $\{x_i-a_1x_j-a_0\}$ for ``small'' elements $a_0,a_1\in k$.
	Note that for finite fields $k$, it might be necessary to take coefficients $a_i$ from finite field extensions.
	We refer the reader to \Cref{section_exist_planes} for a proof (using Gröbner bases) of the generic existence of such hyperplanes.
\end{rmrk}

We employ the (potentially partial) decomposition $\Gamma_0 \coloneqq \Gamma_0'\cup \Gamma_1'\cup\ldots\cup \Gamma_e' \subseteq \Gamma$ created by Algorithm~\ref{algorithm_zero_dimensional_fibers_heuristic} to continue with each component $\Gamma_i'$ independently, cf.\ \Cref{remark_additional_components} for details.

\subsection{Computing the relative boundary hull of the projection}\label{subsection_relative_boundary_hull}

As described in \Cref{subsection_algebraic_dimension}, we begin by replacing $\Gamma$ by 
\begin{enumerate}[(i)]
	\item a closed subset $\Gamma_0 \subseteq \Gamma$, such that
	\item $\Gamma_0$ has locally $0$-dimensional $\pi$-fibers, but still
	\item $\overline{\pi(\Gamma_0)} = \overline{\pi(\Gamma)}$.
\end{enumerate}

The next geometric step is to take the projective closure $\widehat{\Gamma}_0 \subseteq \PP_B^n$ of $\Gamma_0 \subseteq \AA_B^n$.
Now define $\Gamma_0^\infty \coloneqq \widehat{\Gamma}_0\cap H$, the points of $\Gamma_0$ at infinity, where $H \coloneqq \PP_B^n \setminus \AA_B^n$ is the hyperplane at infinity.
The image $\widehat{\pi}(\Gamma_0^\infty)$ of $\Gamma_0^\infty$ under the extended projection $\widehat{\pi}: \PP_B^n \twoheadrightarrow \Spec B$ is the desired relative boundary hull.

\begin{algorithm}[H]
        \SetKwIF{If}{ElseIf}{Else}{if}{then}{elif}{else}{}%
        \DontPrintSemicolon
        \SetKwProg{LocallyClosedApproximationOfProjection}{LocallyClosedApproximationOfProjection}{}{}
        \LinesNotNumbered
        \KwIn{
        A closed subset $\Gamma \subseteq \AA_B^n \xtwoheadrightarrow{\pi} \Spec B$}
        \KwOut{
        The closure $\overline{\pi(\Gamma)}$ of the projection together with a relative boundary hull $D$.
        }
        \LocallyClosedApproximationOfProjection(){($\Gamma$)}{
\setcounter{AlgoLine}{-1}
                \nl $\Gamma_0 \coloneqq \textbf{ZeroDimensionalFibers}(\Gamma)$\;
                \nl Compute $\overline{\pi(\Gamma_0)}$\;
                \nl Compute $\widehat{\pi}(\Gamma_0^\infty)$\;
                \nl \Return{$\overline{\pi(\Gamma_0)}, \widehat{\pi}(\Gamma_0^\infty)$}\;
        }
\caption{Locally closed approximation of projection (geometric)} \label{alg:LCA}
\end{algorithm}

We implemented the above geometric algorithm in the $\mathsf{GAP}$ package $\mathtt{ZariskiFrames}$ \cite{ZariskiFrames}.
It gets automatically translated upon execution into an algebraic algorithm (Algorithm~\ref{alg:LCA_algebraic}) by using the standard dictionary between geometry and algebra, including the following ``optimization'':

In order to compute $\Gamma_0^\infty \coloneqq \widehat{\Gamma}_0\cap H$ we would first need to compute the homogenization $I_0^h$ of the vanishing ideal $I_0$ of $\Gamma_0$ w.r.t.\ a new indeterminate $x_0$ and then intersect the vanishing locus of $I_0^h$ with the hyperplane $\{x_0=0\}$.
We can do the composition of these two steps in an optimized way: just take the monomials of maximal degree (in $x_1, \ldots, x_n$) from the polynomials in $I_0$.
Formally, we denote
\begin{align*}
	f^{\mathrm{maxdeg}} = \sum_{i\in \Z_{\ge0}^m, |i|=d}q_ix^i
\end{align*}
for $f=\sum_{i\in \Z_{\ge0}^m}q_ix^i\in R$, $q_i\in B$, and $d=\max_{q_i\not=0}|i|$ and compute
\begin{align*}
	I_0^{\mathrm{maxdeg}}
	\coloneqq
	\langle
	f^{\mathrm{maxdeg}} \mid f \in I_0
	\rangle
	=
	\langle
	f^{\mathrm{maxdeg}} \mid f \in G_0
	\rangle
	=
	\mcI(\widehat{\Gamma}_0\cap H)
	\mbox{,}
\end{align*}
where $G_0$ is a Gröbner basis of $I_0$ w.r.t.\ a block elimination ordering with $x_i\gg B$ for all $i$.
It follows that
\[
  I_0^{\mathrm{maxdeg}} = \left(I_0^h\right)_{|x_0 = 0} \mbox{.}
\]

We cannot compute the projection of $\Gamma_0^\infty$ down to $\Spec B$ by substituting all $x_i'$s by zero in $I_0^{\mathrm{maxdeg}}$, since $\big(I_0^{\mathrm{maxdeg}}\big)_{|x_1 = \cdots = x_n = 0} = \left(I_0^h\right)_{|x_0 = x_1 = \cdots = x_n = 0} = I_0 \cap B = I\cap B$, where the second and last equality are \cite[Cor.~2.9]{BL_Elimination}.
We rectify this by saturating\footnote{A single ideal quotient might not be sufficient, as $\V\left(bx_1^2-1\right)$ in $\AA_B^1$ over $B=\Q[b]$ shows.} $I_0^{\mathrm{maxdeg}}$ w.r.t.\ the ideal $\langle x_1, \ldots, x_n \rangle$, which corresponds to the irrelevant locus of the hyperplane at infinity.
The vanishing locus of the ideal
\[
  I_\rbh^\infty \coloneqq \left(I^{\mathrm{maxdeg}} : \langle x_1, \ldots, x_n \rangle^\infty \right)_{|x_1 = \cdots = x_n = 0}
\]
is now a relative boundary hull of the image of $\pi_{|\Gamma_0}$.

\begin{algorithm}[H]
	\SetKwIF{If}{ElseIf}{Else}{if}{then}{elif}{else}{}%
	\DontPrintSemicolon
	\SetKwProg{LocallyClosedApproximationOfProjection}{LocallyClosedApproximationOfProjection}{}{}
	\LinesNotNumbered
	\KwIn{
		An ideal $I$ defining an irreducible closed subset $\Gamma \coloneqq V(I) \subseteq \AA^n_B = \Spec R$
	}
	\KwOut{
		An ideal $I_B \unlhd B$ defining the closure of the projection $\overline{\pi(\Gamma)} \coloneqq V(I_B) \subseteq \Spec B$ and an ideal $I_\rbh^\infty \unlhd B$ defining a relative boundary hull $D \coloneqq V(I_\rbh^\infty)$ of $\pi(\Gamma)$.
	}
	\LocallyClosedApproximationOfProjection(){($I$)}{
\setcounter{AlgoLine}{-1}
		\nl  $I_0 \coloneqq \textbf{ZeroDimensionalFibers}(I)$\; \label{alg:LCA_algebraic:0}
		\tcc{The following Gröbner basis $G_0$ gets computed in the algebraic version of Algorithm~\ref{algorithm_zero_dimensional_fibers_heuristic} (\textbf{ZeroDimensionalFibers}).}
		\nl$G_0 \coloneqq\textbf{GröbnerBasis}_{x_i\gg B}(I_0)$\; \label{alg:LCA_algebraic:1}
		$I_B \coloneqq \langle G_0 \cap B \rangle = I_0 \cap B = I \cap B$\;
		\nl $I_0^\mathrm{maxdeg} \coloneqq \langle G_0 ^{\mathrm{maxdeg}}\rangle \coloneqq \left\langle f^{\mathrm{maxdeg}} \mid f \in G_0 \right\rangle$\;
		$I_\rbh^\infty \coloneqq \left( I_0^\mathrm{maxdeg} : \langle x_1, \ldots, x_n \rangle^\infty \right)_{|x_1 = \cdots = x_n = 0}$\; \label{alg:LCA_algebraic:2}
		\nl \Return{$I_B, I_\rbh^\infty$}\;
	}
	\caption{Locally closed approximation of projection (algebraic) \label{alg:LCA_algebraic}}
\end{algorithm}
\noindent
The line numbering in Algorithm~\ref{alg:LCA_algebraic} corresponds to that in Algorithm~\ref{alg:LCA}.

\begin{exmp}\label{example_hyperbola}
	Consider again the hyperbola $\Gamma=\Gamma_0\coloneqq\V(bx-1)$ in $\AA_B^1$ over $B=\Q[b]$ from \Cref{example_hyperbola_geometric}.
	The monomials of maximal degree are $I_0^{\mathrm{maxdeg}}=\langle bx\rangle$.
	This ideal has two components $\langle x\rangle$ and $\langle b\rangle$.
	The projection of the first component $\langle x\rangle$ corresponds to $\overline{\pi(\Gamma)}$ and is removed by the saturation w.r.t.\ $x$.
	The projection $\langle b\rangle$ of the second component $\langle b\rangle$ defines a relative boundary hull, even the relative boundary, of the image.
\end{exmp}

\begin{exmp}\label{example_hyperbola2}
	Consider again the hypersurface $\Gamma\coloneqq\V(bx_1-1)$ in the \emph{two}-dimensional affine space $\AA_B^2$ with coordinate ring $B[x_1,x_2]$ over $B=\Q[b]$ from \Cref{example_hyperbola2_geometric}.
	The variety of the ideal $I^{\mathrm{maxdeg}}=\langle bx_1\rangle$ has all of $\Spec B$ as image under the projection, even after saturation w.r.t.\ $\langle x_1,x_2 \rangle$.
	Intersecting $\Gamma$ with a generic affine subspace $x_2=ax_1$ with $a\in k$ yields $\Gamma_0=\V(bx_1-1,ax_1-x_2)$, which is up to isomorphism the situation of \Cref{example_hyperbola}.
\end{exmp}

\begin{exmp}
	Consider the hypersurface $\Gamma\coloneqq\V(b\cdot (x^2+1)-x)$ in the one-dimensional affine space $\AA_B^1$ with coordinate ring $B[x]$ over $B=\Q[b]$.
	The relative boundary hull of $\pi(\Gamma)$ is given by the variety of the ideal $\langle bx^2\rangle:\langle x \rangle^\infty=\langle b\rangle$.
	Computing further, $\pi\left(\pi^{-1}\left(\V(\langle b\rangle)\right)\cap\Gamma\right)=\V(\langle b\rangle)$.
	Hence, $\pi(\Gamma)=\left(\Spec B\setminus \V(\langle b\rangle)\right)\uplus\V(\langle b\rangle)=\Spec B$.
	So, even though the first relative boundary hull is non-trivial, $\pi_{|\Gamma}$ is surjective.
	
	\begin{center}
		\begin{tikzpicture}[scale=0.3]
		\draw plot [
		samples=100,
		domain=-5:5
		] ({5*\x/(1+\x*\x)},\x);
		\draw[-stealth'] (-3.3,0) -- (3.3,0) node[below]{\footnotesize$b$};
		\draw[-stealth'] (0,-5.3) -- (0,5.3) node[left]{\footnotesize$x$};
		\end{tikzpicture}
	\end{center}
\end{exmp}

\begin{rmrk}\label{remark_additional_components}
	The ideal $I_0$ returned by \textbf{ZeroDimensionalFibers} can be returned as several components.
	The additional components are returned in the list $\ell$ in Algorithm~\ref{algorithm_zero_dimensional_fibers_heuristic}.
	In theory, all these components should be intersected to continue.
	In practice, we just continue with the main component $\Gamma_0'$, for which the condition on zero-dimensional fibers was ensured, and treat the remaining components $\Gamma_i'$ in separate cases.
	This has a major advantage in computational efficiency, as we can work with smaller components, at the cost of losing disjointness.
	Disjointness can be restored by additional computations as in \Cref{rmrk:canonical_decomp}.
\end{rmrk}

\begin{rmrk} \label{rmrk:dualidea}
  Let $f: X \to Y$ be a rational map between affine varieties with positive dimensional fibers.
  An, in some sense dual, idea to decrease the (generic) fiber dimension of $f$ is to find a factorization of $f$
  \begin{center}
    \begin{tikzpicture}
      \coordinate (r) at (4,0);
      \coordinate (d) at (0,-1);
      
      \node (S) {$X$};
      \node (T) at ($(r)$) {$Y$};
      \node (X) at ($0.5*(r)+(d)$) {$Z$};
      
      \draw[-stealth'] (S) -- node[above]{$f$} (T);
      \draw[-doublestealth] (S) -- node[below left]{$\varepsilon$} (X);
      \draw[-stealth'] (X) -- node [below right]{$\widetilde{f}$} (T);
    \end{tikzpicture}
  \end{center}
  where $\varepsilon$ is surjective.
  It follows that $f$ and $\widetilde{f}$ have the same image and: the higher the generic fiber dimension of $\varepsilon$, the lower that of $\widetilde{f}$.
  And once $\widetilde{f}$ has even $0$-dimensional fibers (at least generically) Line~\ref{alg:LCA_algebraic:0} of Algorithm~\ref{alg:LCA_algebraic} becomes redundant (cf.~\Cref{exmp:uMPS}).
  Achieving an epi-mono factorization would be optimal (cf.~\Cref{sec:epimono}).
\end{rmrk}

\section{Further approaches to relative boundary hulls}\label{section_further_approaches}

All constructive proofs of Chevalley's Theorem known to us use locally closed approximations via relative boundary hulls.
We compare our approach to some proofs, many of which have been made algorithmic, and argue for greater efficiency of our algorithm.

\subsection{Comparing the computational efficiency of approaches to Chevalley's theorem}

All approaches for the image of a polynomial map make use of Algorithm~\ref{alg:iteration}.
In particular, these algorithms only differ in their respective subalgorithms which compute the relative boundary hull.
To compare the efficiency of these algorithms, we consider the number of iterations in Algorithm~\ref{alg:iteration} and the computation time in each iteration.

The number of iterations in Algorithm~\ref{alg:iteration} is influenced by the codimension of the relative boundary hull $D$ of $\pi(\Gamma)$.
Generically, the input $\Gamma:=\Gamma \cap \pi^{-1}(D)$ (line~\ref{alg:iteration:5} of subsequent steps in Algorithm~\ref{alg:iteration}) has the same codimension as $D$ has in $\overline{\pi(\Gamma)}$.
Since Algorithm~\ref{alg:iteration} terminates as soon as $\Gamma$ is empty, a larger codimension of $D$ is favorable.
If $\overline{\pi(\Gamma)}$ is irreducible, $D$ is at least of, but rarely bigger than, codimension one.
A relative boundary hull $D$ with multiple components usually results in a reducible subsequent $\overline{\pi(\Gamma)}$.
In particular, a relative boundary hull with many components often leads to additional iterations in Algorithm~\ref{alg:iteration}.

In each iteration, the computation time of the relative boundary hull is strongly dominated by computing the closure of the image (e.g.\ by computing the Gröbner basis of line \ref{alg:LCA_algebraic:1} in Algorithm~\ref{alg:LCA_algebraic}).
This is true for our and all other algorithmically relevant approaches to the image of a polynomial map.
The computation time of this closure in subsequent iterations strongly depends on the degree of relative boundary hulls, as e.g.\ higher degrees make Gröbner bases significantly slower.
To further decrease computation time in later iterations, our algorithm retains sparsity in its input, e.g.\ by choosing hyperplanes in a sparse way (\Cref{rmrk_random_hyperplane}), postponing computations after the intersection with (sparse) hyperplanes (\cref{rmrk_efficiency}), and by decompositions.

Decompositions of the sets $\Gamma$ (or $\Gamma_0$) increase the number of iterations, which slightly increases the computation time, but they strongly decrease the size of the considered $\Gamma$ in the iteration.
The slight increase in the number of iterations can --to a certain extent-- be controlled by \Cref{sec:improve_alg_iteration}.
We always use a decomposition if we know one.
As primary decompositions are expensive to compute, we employ heuristics (cf.\ \cref{rmrk_efficiency}, \Cref{section_no_primary}, and lines \ref{algorithm_zero_dimensional_fibers_heuristic:10} to \ref{algorithm_zero_dimensional_fibers_heuristic:17} in Algorithm~\ref{algorithm_zero_dimensional_fibers_heuristic}).

Summing up, the efficiency of a proof of Chevalley's theorem can mostly be read off the degree of its relative boundary hulls.

Many approaches construct a relative boundary hull $D\subseteq Y$ of the image of a (w.l.o.g.) dominant morphism $f:X\to Y$ by imposing additional strong properties on the open complement $U=Y\setminus D= \overline{f(X)} \setminus D$, which as a by-product ensures $U\subseteq f(X)$.
This deviates the focus from constructing a small relative boundary hull $D$.
Instead of ensuring such properties for $U$, our algorithm focuses on constructing points potentially outside of the image, by finding points coming from infinity.
Furthermore, our approach is intrinsic in the case of zero-dimensional fibers, whereas previous approaches depend on a choice $\alpha$ (e.g., coordinate system, term order, \ldots), so that the relative boundary hull $D=D(\alpha)$ could be replaced by the smaller $\bigcap_\alpha D(\alpha)$.
And even if one succeeds in intersecting over different choices, the additional properties imposed on $U$ force the degree of the relative boundary hull $D$ to be larger than the one produced by our approach (for an explicit comparison see \Cref{rmrk:comparison} and the examples following it).
This in turn induces more costly subsequent computations and more iterations in Algorithm~\ref{alg:iteration}.

The following remark describes an extreme case of ensuring stronger properties.

\begin{rmrk}\label{remark_second_induction}
	Whereas the outer induction of Chevalley's proof in Algorithm~\ref{alg:iteration} seems necessary to compute the image of a rational map, some constructions of a relative boundary hull rely on an \emph{additional} induction on the dimension in order to ensure the generic zero-dimensionality of the fibers of $\Gamma$ and its projections in each step.
	To this end the base-projection $\pi: \AA_B^n \twoheadrightarrow \Spec B$ is factored into $n$ successive projections $\AA_B^n \twoheadrightarrow \AA_B^{n-1} \twoheadrightarrow \cdots \twoheadrightarrow \AA_B^1 \twoheadrightarrow \AA_B^0 = \Spec B$.
	Such an induction on the dimension is for example necessary for all resultant based approaches that need to interpret $\AA^i_B$ as $\AA_{\AA_B^{i-1}}^1$.
	This adds major computational drawbacks.
	Already the first projection $\AA_B^n \twoheadrightarrow \AA_B^{n-1}$ of a closed $\Gamma \subseteq \AA_B^n$ is in general a constructible subset of $\AA_B^{n-1}$, introducing relative boundary hull so ``big'' that even using decompositions often fails due to a combinatorial explosion of the number of cases.
\end{rmrk}

\subsection{Generic freeness approaches}

Generic freeness is at the heart of two constructions of relative boundary hulls: via Gröbner bases and via resultants.
A special case of the generic freeness lemma can be stated as follows:

\begin{theorem}[Generic freeness lemma for affine rings]
	Let $R=B[x_1,\ldots,x_n]$ for a commutative Noetherian domain $B$.
	Then, for any finitely presented $R$-module $M$ there exists an $s\in B$ such that the localization $M_s$ is a free $B_s$-module.
\end{theorem}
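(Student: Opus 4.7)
The plan is to induct on the number $n$ of polynomial indeterminates. Observe first that since $B$ is Noetherian, so is $R = B[x_1, \ldots, x_n]$ by Hilbert's basis theorem, so finitely presented and finitely generated $R$-modules coincide throughout the argument.

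For the base case $n = 0$, $M$ is a finitely generated module over the Noetherian domain $B$, so it admits a prime filtration $0 = M_0 \subset M_1 \subset \cdots \subset M_r = M$ with $M_i/M_{i-1} \cong B/\mathfrak{p}_i$ for primes $\mathfrak{p}_i \unlhd B$. For each $i$ with $\mathfrak{p}_i \neq 0$ I pick a nonzero $s_i \in \mathfrak{p}_i$ and let $s$ be the product of these $s_i$ (setting $s = 1$ if all $\mathfrak{p}_i$ vanish). After localizing at $s$, every subquotient $(B/\mathfrak{p}_i)_s$ is either $B_s$ or zero, hence free over $B_s$, so the localized short exact sequences $0 \to (M_{i-1})_s \to (M_i)_s \to (M_i/M_{i-1})_s \to 0$ all split and an easy induction on $i$ shows that $M_s$ is $B_s$-free.

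For the inductive step I would write $R = R'[x_n]$ with $R' = B[x_1, \ldots, x_{n-1}]$, fix generators $m_1, \ldots, m_r$ of $M$ over $R$, and consider the filtration $M_k \coloneqq \sum_{j=0}^k x_n^j \cdot \bigl(R' m_1 + \cdots + R' m_r\bigr)$. Each $M_k$ is a finitely generated $R'$-submodule, $M = \bigcup_k M_k$, and multiplication by $x_n$ induces $R'$-linear surjections $Q_k \coloneqq M_k / M_{k-1} \twoheadrightarrow Q_{k+1}$ for $k \geq 1$. The kernels of the iterated maps $Q_1 \twoheadrightarrow Q_k$ form an ascending chain of submodules of the finitely generated $R'$-module $Q_1$; by Noetherianity of $R'$ this chain stabilizes at some $k_0$, so $Q_k \cong Q_{k_0}$ for all $k \geq k_0$. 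Applying the inductive hypothesis to the finitely many (up to isomorphism) finitely generated $R'$-modules $Q_1, \ldots, Q_{k_0}$ yields a single $0 \neq s \in B$ such that every $(Q_k)_s$ is free over $B_s$. Propagating freeness through the split localized sequences $0 \to (M_{k-1})_s \to (M_k)_s \to (Q_k)_s \to 0$ produces coherent $B_s$-bases of the $(M_k)_s$, and $M_s = \varinjlim_k (M_k)_s$ is then the union of an ascending chain of free $B_s$-modules with compatible bases, hence free.

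The technical crux is the stabilization of the surjective system $Q_1 \twoheadrightarrow Q_2 \twoheadrightarrow \cdots$ into eventual isomorphisms; without it one would face infinitely many genuinely distinct subquotients, and no single $s \in B$ could trivialize them all at once. Everything else—the prime filtration in the base case, the splitting of short exact sequences whose right term is free, and the colimit of a chain of free modules with coherent bases—is formal once that finiteness is secured.
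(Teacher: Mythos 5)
Your argument is correct in substance, but it is not the paper's route: the paper states the generic freeness lemma without a classical proof and instead treats it as something that \emph{follows} from its algorithmic constructions, namely the Gröbner-basis construction of Algorithm~\ref{alg:Kemper} (after Kemper), where over the complement of the vanishing locus of the product of the leading coefficients $\operatorname{LC}_B(g)$ of a reduced elimination Gröbner basis the standard monomials furnish a free basis, and, more laboriously, the resultant-based recursion. What you give is the classical dévissage proof of Grothendieck's lemma: induction on $n$, a prime filtration over the Noetherian domain $B$ in the base case, and the filtration $M_k=\sum_{j\le k}x_n^j(R'm_1+\cdots+R'm_r)$ whose successive quotients $Q_k$ become eventually isomorphic because the kernels of $Q_1\twoheadrightarrow Q_k$ stabilize in the Noetherian module $Q_1$; that stabilization is indeed the crux, and your handling of it is right. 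One small repair: the propagation of freeness through $0\to (M_{k-1})_s\to (M_k)_s\to (Q_k)_s\to 0$ needs a base of the induction on $k$, so you must also arrange $(M_0)_s$ to be free, i.e.\ include $M_0$ (equivalently $Q_0=M_0$, which also surjects onto $Q_1$ via $x_n$) among the finitely many $R'$-modules to which you apply the inductive hypothesis before taking the product defining $s$; with that, the union-of-compatible-bases argument is fine, and since $B$ is a domain your $s$ is nonzero, as it should be for the statement to have content. The trade-off between the two approaches: your proof is more elementary and works for an arbitrary Noetherian domain $B$ with no computability hypotheses and for arbitrary finitely presented modules, but it is non-effective (prime filtrations and the choice of $s$ are not algorithmic); the paper's Gröbner-theoretic argument presupposes that $B$ admits Gröbner bases but produces an explicit $s$ as a product of leading coefficients, which is precisely what the paper needs in order to compare the resulting relative boundary hull $V(s)$ with the smaller one produced by Algorithm~\ref{alg:LCA_algebraic}.
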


Consider $\Gamma\subseteq \AA^n_B$.
As mentioned above, we may assume without loss of generality that the projection $\pi:\AA_B^n \twoheadrightarrow \Spec B$ is dominant when restricted to $\Gamma$.
Then, the module $M=R/\mcI(\Gamma)$ is free over $\Spec(B)\setminus\V(s)$, for $s\in B$ as in the generic freeness lemma.
As $M$ is additionally non-zero due to the dominance, $\Spec(B)\setminus \V(s)$ is part of the image of $\Gamma$ under $\pi$.
Hence, $\V(s)$ is a relative boundary hull.

Local freeness of positive rank is a rather strong property, which is only needed to guarantee non-zero fibers.
This leads to computational inefficiencies due to the polynomial $s$ often being of a very high degree, as not only the points not in the image are described by $s$, but also the (usually much bigger) locus of non-freeness.

\subsubsection{A Gröbner basis approach to generic freeness}

Generic freeness is the idea of \cite[Alg.~10.3]{kemper2010course}, where Gröbner basis structures over $R=B[x_1,\ldots,x_n]$ are used to construct a ``Gröbner relative boundary hull'', see Algorithm~\ref{alg:Kemper} for details.
This approach computes the
reduced 
Gröbner basis $G:=\text{GröbnerBasis}_{x_i\gg B}(I)$ w.r.t.\ an elimination ordering $x_i\gg B$.
Now, a relative boundary hull is given by $\V(s)$ for
\begin{align*}
	s=\prod_{g\in (G\setminus B)}\operatorname{LC}_B(g)\mbox{,}
\end{align*}
where $\operatorname{LC}_B(g)$ denotes the leading coefficient as a polynomial in $B$ of the polynomial $g$ when considered as a polynomial in the $x_i$'s w.r.t.\ the chosen monomial order $x_i\gg B$ in the Gröbner basis calculation.
This approach yields a new algorithmic proof of generic freeness for rings with Gröbner bases.
Note that this approach even needs to make two choices: the choice of coordinates and the choice of term orders.
A special case of this approach for $B$ a principal ideal domain (specifically $B=\Z$) is given in \cite[Thm.~3.5]{BV03}.

\begin{algorithm}[H]
	\SetKwIF{If}{ElseIf}{Else}{if}{then}{elif}{else}{}%
	\DontPrintSemicolon
	\SetKwProg{LocallyClosedApproximationOfProjection}{LocallyClosedApproximationOfProjection}{}{}
	\LinesNotNumbered
	\KwIn{
		An ideal $I$ defining an irreducible closed subset $\Gamma \coloneqq V(I) \subseteq \AA^n_B = \Spec R$ where $R \coloneqq B[x_1, \ldots, x_n]$
	}
	\KwOut{
		An ideal $I_B \unlhd B$ defining the closure of the projection $\overline{\pi(\Gamma)} \coloneqq V(I_B) \subseteq \Spec B$ and an ideal $I_\rbh \unlhd B$ defining a relative boundary hull $D \coloneqq V(I_\rbh)$ of $\pi(\Gamma)$.
	}
	\LocallyClosedApproximationOfProjection(){($I$)}{
		\nl $G \coloneqq\textbf{GröbnerBasis}_{x_i\gg B}(I)$ \\
		$I_B \coloneqq \langle G \cap B \rangle = I \cap B$\;
		\nl $I_\rbh \coloneqq \left\langle \prod_{g\in (G\setminus B)} \operatorname{LC}_B(g) \right\rangle + I_B = \left(\prod_{g\in (G\setminus B)} \left\langle  \operatorname{LC}_B(g) \right\rangle\right) + I_B$\; \label{alg:Kemper:2}
		\nl \Return{$I_B, I_\rbh$}\;
	}
	\caption{Locally closed approximation of projection via freeness (algebraic)\label{alg:Kemper}}
\end{algorithm}
\noindent
The line numbering in Algorithm~\ref{alg:Kemper} corresponds to that in Algorithm~\ref{alg:LCA_algebraic}.

This relative boundary hull has three obvious advantages:
\begin{itemize}
  \item its computation does not require generically zero-dimensional fibers as in Line~\ref{alg:LCA_algebraic:0} of Algorithm~\ref{alg:LCA_algebraic};
  \item it is easy to compute since we need the elimination Gröbner basis anyway for the computation of the ideal $I_B$ with vanishing locus $\overline{\pi(\Gamma)}$;
  \item the factorization $\prod_{g\in (G\setminus B)}\operatorname{LC}_B(g)$ in Line~\ref{alg:Kemper:2} induces an obvious decomposition of the relative boundary hull (see below).
\end{itemize}

However, and apart from the non-intrinsic choices necessary for this construction, the resulting relative boundary hull is of equal dimension as    the one proposed in Algorithm~\ref{alg:LCA_algebraic} of this paper but tends to be of much higher degree (and hence much larger).
This can be seen as follows:

\begin{rmrk} \label{rmrk:comparison}
Algorithm~\ref{alg:Kemper} becomes directly comparable to our new Algorithm~\ref{alg:LCA_algebraic} when the ambient fiber dimension $n=1$, i.e., when $R=B[t]$.
In that case Line~\ref{alg:LCA_algebraic:0} of Algorithm~\ref{alg:LCA_algebraic} becomes obsolete and Line~\ref{alg:LCA_algebraic:2} of Algorithm~\ref{alg:LCA_algebraic} simplifies to
\[
  I_\rbh^\infty \coloneqq I_B + \left\langle \operatorname{LC}_B(g) \mid g \in (G \setminus B) \right\rangle = I_B + \sum_{g \in (G \setminus B)} \left\langle \operatorname{LC}_B(g) \right\rangle \mbox{.}
\]
In particular, as $I_\rbh^\infty=I_B+\sum_{g \in (G \setminus B)} \left\langle \operatorname{LC}_B(g) \right\rangle \supseteq I_B+\prod_{g\in (G\setminus B)} \left\langle  \operatorname{LC}_B(g) \right\rangle=I_\rbh$, we get the inclusion
\[
  D^\infty \coloneqq V(I_\rbh^\infty) \subseteq V(I_\rbh) \eqqcolon D
\]
for the two relative boundary hulls $D^\infty$ and $D$.
Furthermore, the degree of $I_\rbh$ increases rapidly with the number of Gröbner basis elements in $G \setminus B$.
\end{rmrk}

\begin{rmrk} \label{rmrk:improveKemper}
To improve Algorithm~\ref{alg:Kemper} one can use more computations to replace $I_\rbh$ with its radical $\sqrt{I_\rbh}$ at Line~\ref{alg:Kemper:2} in the obvious way:
Denote by $M$ the set of (normalized) irreducible factors of all leading coefficients $\operatorname{LC}_B(g)$ for $g \in (G \setminus B)$. Then $\prod_{c \in M} c$ is the square-free part of $\prod_{g\in (G\setminus B)} \operatorname{LC}_B(g)$.
Further denote by $N$ be the set of minimal associated primes of all ideals $\langle c, I_B \rangle$.
Then $\sqrt{I_\rbh} = \bigcap_{\p \in N} \p$.
Still, this improvement does not decrease the degree of $D = V(I_\rbh)$ as a variety (i.e., reduced scheme).
\end{rmrk}

The next examples will use the notation of \Cref{rmrk:improveKemper} and show that the relative boundary hull $D$ produced by Algorithm~\ref{alg:Kemper} can be a highly reducible variety of a very high degree in comparison with $D^\infty$.
We verify the strict inclusion $D^\infty \subsetneq D$.

\begin{exmp}\label{exmp:rationalcurves}
  Consider the rational curves
  \[
    \phi_m: \AA^1_\Q \to \AA^m_\Q, t \mapsto (b_i \coloneqq t^i(t^{2i}+1) \mid i=1, \ldots, m) \mbox{.}
  \]
  and the vanishing ideal $I_m = \langle b_i - t^i(t^{2i}+1) \mid i = 1, \ldots, m \rangle$ of the graph of $\phi_m$.
  It is obvious that the image of $\phi_m$ is closed.
  Let $M_m$ the set of irreducible factors of leading coefficients of $I_m$ in the notation of \Cref{rmrk:improveKemper}.
  Explicit computations show that $|M_m| = m+2$ for $m=2,\ldots,10$ and $( \deg c \mid c \in M_m ) = (2,1,\ldots, 1)$.
  One can verify for $m=1,\ldots,10$ that already $\langle M_m \rangle = \langle c \mid c \in M_m \rangle \subseteq I_\rbh^\infty$ is the unit ideal, hence $D^\infty = V(I_\rbh^\infty) = \emptyset$, and the iteration in Algorithm~\ref{alg:iteration} stops with an empty relative boundary hull, verifying that $\img \phi_m$ is closed.
  In contrast, the range of degrees of the radical ideals of $\langle c, I_B \rangle$ for $c \in M_m$ and the sum of all degrees can be read of the following table:
  \[
  \begin{array}{r|cccc}
    m & 2 & 3 & 4 & 5 \\
    \hline
    \min_{c \in M_m} \deg \sqrt{\langle c, I_B \rangle} & 2 & 3 & 6 & 7 \\
    \max_{c \in M_m} \deg \sqrt{\langle c, I_B \rangle} & 5 & 15 & 15 & 27 \\
    \sum_{c \in M_m} \deg \sqrt{\langle c, I_B \rangle} & 13 & 35 & 55 & 87 \end{array}
  \]
  Starting from $m=6$ at least one of the ideals $\langle c, I_B \rangle$ gets so complicated, that computing its radical with \textsc{Singular} does not terminate in less than an hour.
  This is the reason why the above table stops at $m=5$, since we are not even able to compute all degrees in a sensible time.
\end{exmp}

\begin{exmp} \label{exmp:uMPS224_with_Kemper}
Considering $\operatorname{uMPS}(2,2,4)$ from \Cref{exmp:uMPS} the cardinality of $M$ is $156$ and the sum of degrees of all $c \in M$ is $632$.
The degrees of the radical ideals $\sqrt{\langle c, I_B \rangle}$ for $c \in M$ range between $6$ and $60$ with $3792$ being the sum of all degrees.
\textsc{Singular} failed to compute the minimal associated primes of some of these $156$ ideals.
In stark contrast, the ideal $I_\rbh^\infty$ is a principal prime ideal generated in degree $6$ and one can computationally verify the (very) strict inclusion $D^\infty = V(I_\rbh^\infty) \subsetneq V(I_\rbh) = D$.

Whereas Algorithm~\ref{alg:iteration_graph} (the refined version of Algorithm~\ref{alg:iteration}) in conjunction with Algorithm~\ref{alg:LCA_algebraic} terminates within seconds for $\operatorname{uMPS}(2,2,4)$ (cf.~\Cref{exmp:uMPS}), already the computation of the first relative boundary hull using Algorithm~\ref{alg:Kemper} fails to terminate in less than a day due to the above mentioned degree explosion.
\end{exmp}

Luckily, the smaller example $\operatorname{uMPS}(2,2,3)$ terminates also in conjunction with Algorithm~\ref{alg:Kemper} after a few seconds, but uses much more intermediate steps as shown in the notebook \cite{uMPS223}.
A more impressive jump in the number of intermediate steps when using Algorithm~\ref{alg:Kemper} is demonstrated in the notebook \cite{CFN_reduced}.

\subsubsection{Resultant based methods to generic freeness}

Resultant based methods are conceptually simple, which makes them a prime candidate for a theoretically motivated proof of Chevalley's Theorem, cf.\  \cite[Thm.\ 3.16, Lem.~3.17]{Harris2013algebraic} or \cite[Thm.~3]{chevalley1955schemas} together with \cite[unnumbered lemma]{CartanVarieties} or \cite[\href{https://stacks.math.columbia.edu/tag/00FE}{Theorem 00FE}]{stacks-project} or \cite[Exerc.~II.3.19]{har}.
The case distinctions necessary to compute an image are usually already implemented \cite{MazaMEGA2000,epsilon,regularchains}, in one case even by the second author \cite{thomasalg_jsc,hp_thomasdecomp}.

Resultant based methods replace the projection $\pi:\AA^n_B \twoheadrightarrow \Spec B$ from an $n$-dimensional relative affine space $\AA^n_B$ to a composition of $n$ chosen successive projections $\pi_i:\AA^i_B \twoheadrightarrow \AA^{i-1}_B$, $n\ge i\ge 1$ of a one-dimensional affine spaces.
Each of these $n$ projections $\pi_i$ is described by a univariate polynomial $p_i$ (considered as equation or inequation), which can be computed via resultant methods.
If $p_n$ is an equation, its leading coefficients and its discriminant then yields (a superset of) the non-free locus in $\AA^{n-1}_B$, whereas $\Gamma$ is free over the remaining (generic) subset of $\overline{\pi(\Gamma)}$.
Of course, we need to recursively consider $\AA^{n-1}_B$, which leads to recursively more and more case distinctions.

While Gröbner bases and resultants have comparable degree bounds in the case of two bivariate polynomials \cite{bistritz2010bounds,villard2018computing,lazard1983grobner,lazard1985ideal,buchberger1983note}, resultants have a significantly worse complexity for larger systems.
One reason for this is that resultant based methods emulate the lexicographical term order of Gröbner bases when using the iterated projections, whereas our algorithm is content with any block elimination order.
Furthermore, resultant based methods show behavior close to the worst case bounds, whereas Gröbner bases are often able to exploit sparsity in the inputs.
As a consequence, resultant based methods compute the closure of the image of a morphism significantly slower than Gröbner bases.
Since computing this closure is the bottleneck of our algorithm, resultant based methods often fail to even compute this preliminary step for computing the image of a morphism.
For example, the computations of \Cref{exmp:rationalcurves} for $m=4$ do not terminate after a CPU-hour with
\begin{itemize}[leftmargin=*]
  \item $\mathtt{Triangularize}$ (\textsf{Maple} package $\mathtt{RegularChains}$ \cite{ctd}),
  \item $\mathtt{AlgebraicThomasDecomposition}$ (\textsf{Maple} package $\mathtt{AlgebraicThomas}$ \cite{thomasalg_jsc}),
  \item $\mathtt{PolynomialMapImage}$ (\textsf{Maple} package $\mathtt{RegularChains}$ \cite{ctd}),
  \item $\mathtt{Comprehensive}$ (\textsf{Maple} package $\mathtt{AlgebraicThomas}$ \cite{thomasalg_jsc}).
\end{itemize}
In contrast, Algorithm~\ref{alg:iteration} in conjunction with Algorithm~\ref{alg:LCA} terminates in less than $10$ seconds for $m=15$.

The recursive approach of resultants yields an algorithmic, albeit technically complicated, proof of generic freeness for affine $k$-algebras and suffers from all the drawbacks induced by the successive projections as discussed in \Cref{remark_second_induction}, in addition to choosing an ordered coordinate system.
In particular, this approach tends to induce huge relative boundary hulls.

\subsection{Images via Gröbner covers}

Consider $k$ a \emph{field}.
A Gröbner cover is a stratification of $\Spec B$ into locally closed sets $C_i$, such that $\pi^{-1}(C_i)\cap\Gamma$ can be described by a single Gröbner basis with leading ideal independent of specialization of the $b_j$'s to elements of the algebraic closure $\overline{k}$ \cite{MW10}.
In particular, the coordinate ring of $\pi^{-1}(C_i)\cap\Gamma$ is free over the coordinate ring of $C_i$ for all $i$, hence we have a much stronger condition than non-empty fibers.
The construction of a Gröbner cover is algorithmic, i.e., Gröbner covers are another constructive approach to generic freeness. 
Gröbner covers gives us the image of a projection on a silver platter: the image $\pi(\Gamma)$ is the union of all locally closed sets $C_i$, where the corresponding Gröbner basis is not $\{1\}$.
The computational cost of Gröbner covers is high, as it demands the existence of a uniform Gröbner basis, a much stronger property than only non-zero fibers or generic freeness.
Furthermore, coordinates and term orders need to be chosen.

\subsection{Less effective approaches}

There are additional proofs of Chevalley's Theorem, which are in principle constructive, but have not been suggested as algorithmic.

Some proofs build on the Noether normalization to ensure the existence of preimages outside of a hypersurface of codimension 1 (cf.\ \cite[Thm.~1.8.4, Lem.~1.8.5.1]{EGA4}, \cite[V.\textsection3.1.Cor.~3]{Bou72}, \cite[Lem.~10.18]{goertzwedhorn}).
This does probably not yield an efficient algorithm, as the Noether normalization in itself is usually rather expensive, leads to a computationally costly base change that removes the sparsity in the input, the additional property of a Noether normalization is rather specific, and hence the resulting hypersurfaces is usually big.
The Noether normalization is in some sense dual to our approach: the dimension of the fibers is brought down to zero, not by intersecting with hyperplanes, but by enlarging the base.

An old version of Vakil's lecture notes \cite[8.4.2]{vakilrisingseaold} also constructs the relative boundary hull via the projection of points added at infinity.
However, similar to resultant based methods, the general setting $\pi:\AA_B^n \twoheadrightarrow \AA_B^0 = \Spec B$ is reduced to $n$ successive projections $\pi_i:\AA^i_B \twoheadrightarrow \AA^{i-1}_B$, $n\ge i\ge 1$ and the projective closure is only taken in this reduced case.
However, this approach suffers from the properties described in \Cref{remark_second_induction}.
We expect this succession of projections to unnecessarily enlarge the relative boundary hull.

\subsection{The approach of Harris, Michalek, and Sertöz}\label{subsection_Leipzig}

Our approach is closely related to \cite{ComputingImages}: both make the map more well-behaved (closed resp.\ everywhere defined) by extending their domains (by infinity resp.\  an exceptional divisor of a blowup) and get a relative boundary hull as the image of the extension of the domain.
An additional similarity is that both approaches need generically zero-dimensional fibers of the map.
This necessitates the only sources of choices: the affine subspaces with which we intersect.
The main differences lies in the respective settings: our paper works in an affine setting, whereas \cite{ComputingImages} works in a projective setting.

These two settings are special cases of one another, where the affine setting is reduced to the projective setting (cf.\ \cite[\S2.1]{ComputingImages}\footnote{Beware: the formulas (when interpreted verbatim) only cover the case of \emph{homogeneous polynomial} maps.}) and the projective setting can be reduced to the affine setting via
\begin{itemize}
	\item the stratification $\PP^n=\AA^n\uplus\ldots\uplus\AA^0$,
	\item extending a map
	\begin{align*}
	  f:
	  \begin{cases}
	    \Spec k[x_1,\ldots,x_n] &\to \Spec k[b_1,\ldots,b_m] \mbox{,} \\
	    (x_1,\ldots, x_n) & \mapsto (b_i) \coloneqq (f_i(x_1,\ldots,x_n))
	  \end{cases}
	\end{align*}
	to
	\begin{align*}
	  \widetilde{f}:
	  \begin{cases}
	    \Spec k[x_1,\ldots,x_n,s,t]/\langle st-1\rangle &\to \Spec k[b_1,\ldots,b_m] \mbox{,}\\
	    (x_1, \ldots, x_n, s,t ) & \mapsto (b_i) \coloneqq (s\cdot f_i(x_1,\ldots,x_n))
	  \end{cases}
	\end{align*}
	to include the operation of the one-dimensional torus\footnote{The important special case is $k^*$ for a field $k$.} over $k$, and 
	\item removing the irrelevant locus with vanishing ideal $\langle b_1,\ldots,b_m\rangle$.
\end{itemize}
In our experience, each of these two algorithms works best when applied in their intended setting, as pressing them into the other framework leads to unnecessary inefficiencies.

\begin{exmp} \label{example_plane_line_point}
	Consider the rational map
	\begin{align*}
		f:\AA^2\to\AA^2:(x_1,x_2)\mapsto (b_1,b_2) \coloneqq (x_1,x_1x_2)
	\end{align*}
	with all of $\AA^2$ as closure of the image.
	The fibers are already generically zero-dimensional.
	
	The approach from this paper considers the graph $\{b_1-x_1,b_2-x_1x_2\}\subseteq\AA^2\times\AA^2$.
	We get a relative boundary hull via
	\begin{align*}
		\left(\langle b_1-x_1,b_2-x_1x_2\rangle^{\mathrm{maxdeg}}:\langle x_1,x_2\rangle^\infty\right)_{|x_1=x_2=0}
		&=\left(\langle b_1x_2,x_1\rangle:\langle x_1,x_2\rangle^\infty\right)_{|x_1=x_2=0}\\
		&=\langle x_1,b_1\rangle_{|x_1=x_2=0}\\
		&=\langle b_1\rangle.
	\end{align*}
	
	The approach from \cite{ComputingImages} considers $f$ extended, but retaining the same image, to 
	\begin{align*}
		f':\PP^2\to\PP^2:(x_0:x_1:x_2)\mapsto (x_0^4:x_0^3x_1:x_0^2x_1x_2)\mbox{.}
	\end{align*}
	The blowup $\operatorname{BL}_{\varphi'}\PP^2$ is defined as subset of $\PP^2\times\PP^2$ by the $\Z^2$-homogeneous ideal\footnote{In the computation of the blowup one can avoid eliminating the auxiliary variable $t$ by computing a row-syzygy matrix $s = \left(\begin{smallmatrix} 0 & -x_2 & x_0 \\ -x_1 & x_0 & 0 \end{smallmatrix}\right)$ of $\left(\begin{smallmatrix} x_0^4 \\ x_0^3x_1 \\ x_0^2x_1x_2 \end{smallmatrix}\right)$ and then multiply $s \cdot \left(\begin{smallmatrix} b_0 \\ b_1 \\ b_2 \end{smallmatrix}\right) = \left(\begin{smallmatrix} b_2x_0-b_1x_2,b_1x_0-b_0x_1 \end{smallmatrix}\right)$.}
	\begin{align*}
	\langle b_0-tx_0^4,b_1-tx_0^3x_1,b_2-tx_0^2x_1x_2\rangle\cap k[x_0,x_1,x_2,b_0,b_1,b_2]=\langle b_2x_0-b_1x_2,b_1x_0-b_0x_1\rangle
	\end{align*}
	in $k[x_0,x_1,x_2,b_0,b_1,b_2]$ with degrees $(1,0)$ for the $x_i$'s and $(0,1)$ for the $b_i$'s.
	The construction of the exceptional divisor $\mathcal{E}$ of this blowup takes the homogeneous ideal
	\begin{align*}
	\delta
	&:=\left(\left(\langle 0\rangle+\langle b_2x_0-b_1x_2,b_1x_0-b_0x_1\rangle\right):\langle x_0^4,x_0^3x_1,x_0^2x_1x_2\rangle^\infty\right)+\langle x_0^4,x_0^3x_1,x_0^2x_1x_2\rangle \\
	&=\langle b_2x_0-b_1x_2, b_1x_0-b_0x_1, x_0^4, x_0^3x_1, x_0^2x_1x_2\rangle
	\end{align*}
	and removes irrelevant components
	\begin{align*}
	\varepsilon&:=\left(\delta:\langle x_0,x_1,x_2\rangle^\infty\right):\langle b_0,b_1,b_2\rangle^\infty\\
	&=\langle b_2x_0-b_1x_2,b_1x_0-b_0x_1,x_0^2x_1x_2,x_0^3x_1,b_0x_0^2x_1,x_0^4,b_0^3x_1^2,b_0^3b_1^2x_1,b_0^3b_1^4\rangle
	\end{align*}
	The image of $\mathcal{E}$ in $\PP^1$ under the map induced by $\phi'$ is then given by the ideal
	\begin{align*}
	\varepsilon \cap k[b_0,b_1,b_2] = \langle b_0^3b_1^4\rangle.
	\end{align*}
	This is a relative boundary hull.
	This is basically the same relative boundary hull as $\langle b_1\rangle$ in our approach, just with additional factor $b_0$ to remove points at infinity and powers introduced by the projective modeling.
\end{exmp}

The approach from \cite{ComputingImages} needs polynomial maps, whereas our approach can flexibly switch between maps (the general case) and projections (which do not duplicate the indeterminates of the domain).

\section{Examples}\label{section_examples}

\begin{exmp}[Rabinowitsch trick revisited] \label{exmp:Rabinowitsch}
  Let $B$ be an affine algebra, $J \unlhd B$, and $p \in B$.
  Consider the locally closed set $\Delta \coloneqq \V(J) \setminus \V(p) = \V(J) \setminus \V( \langle J, p \rangle )\subseteq \Spec B$.
  Rabinowitsch described this locally closed set $\Delta$ as the 
  image of the closed set
  \[
    \Gamma \coloneqq \Delta^\mathrm{rab} \coloneqq \V\left( \langle J, t p - 1 \rangle_{B[t]}\right) \subseteq \AA_B^1 = \Spec B[t]
  \]
  under the natural projection $(\pi: \Spec B[t] \twoheadrightarrow \Spec B) = \Spec( B \hookrightarrow B[t] )$.
  We call $\Gamma \coloneqq \Delta^\mathrm{rab}$ the \textbf{Rabinowitsch cover} of $\Delta$.
  
  Let $G$ be a Gröbner basis of $J$ (w.r.t.\  some global monomial order on $B$).
  Then $G_0 \coloneqq G \cup \{t p - 1 \}$ is a Gröbner basis of $I_0 \coloneqq I \coloneqq \langle J, t p - 1 \rangle_{B[t]}$ w.r.t.\  a block elimination order $t\gg B$ on $B[t]$.
  Indeed, since the fibers of $\pi_{|\Gamma}$ are singletons or empty, Algorithm~\ref{alg:LCA_algebraic} verifies that the closure of the projection is given by the ideal $I_B \coloneqq \langle G_0 \cap B \rangle = \langle G \rangle = J$ and the relative boundary hull by the ideal
  \[
    I_\rbh \coloneqq \left( \langle G_0^\mathrm{maxdeg} \rangle : t^\infty \right)_{|t = 0} \\
    = \underbrace{\left( \langle G \rangle : t^\infty \right)_{|t = 0}}_{= \langle G \rangle = J} + \underbrace{\left( \langle tp \rangle : t^\infty \right)_{|t = 0}}_{= \langle p \rangle} = \langle J, p \rangle \mbox{}.
  \]
  Finally, Algorithm~\ref{alg:iteration} states that this is already the entire projection since the ideal $I + \langle I_\rbh \rangle_{B[t]} = \langle J, t p - 1, p \rangle = \langle 1 \rangle$ is the unit ideal.
  
  The same argument applies to the iterated Rabinowitsch trick where the locally closed $\Delta \coloneqq \V(J) \setminus \V(p_1 \cdots p_n) = \V(J) \setminus \V(p_1) \setminus \ldots \setminus \V(p_n)$ is isomorphic to the closed Rabinowitsch cover
  \[
    \Gamma \coloneqq \V(\langle J, t_1 p_1 - 1, \ldots, t_n p_n - 1 \rangle) \subseteq \AA_B^n = \Spec B[t_1, \ldots, t_n]
  \]
  under the natural projection $\pi \coloneqq \Spec( B \hookrightarrow B[t_1, \ldots, t_n])$.
  
  It is well-known that Rabinowitsch trick may also be generalized to general locally closed subsets $\Delta \coloneqq V(J) \setminus V(\langle q_1, \ldots, q_r \rangle)$.
  This set is obviously the projection of the closed Rabinowitsch cover\footnote{This was important in the proof of \Cref{thm:Chevalley_general} in \Cref{proof_Chevalley_general}.}
  \begin{align}\label{exmp:Rabinowitsch:exception}
    \Gamma \coloneqq \Delta^\mathrm{rab} \coloneqq V(\langle J, (t q_1 - 1) \cdots (t q_r - 1) \rangle) = \bigcup_{i=1}^r V(\langle J, t q_i - 1 \rangle) \subseteq \AA_B^1
  \end{align}
  under the projection $\pi \coloneqq \Spec(B \hookrightarrow B[t])$.
  Although the projection $\pi_{|\Gamma}$ will not be injective in general, it will still have $0$-dimensional fibers over its image.
\end{exmp}

\begin{exmp}[Uniform matrix product states]\label{exmp:uMPS}
  For a field $k$ and $D,d,N \in \Z_{>0}$ consider the map
  \[
  T_{D,d,N}:
  \begin{cases} (k^{D \times D})^d &\to \operatorname{Cyc}^N(k^d) \\
  (M_0,\ldots,M_{d-1}) &\mapsto \sum_{0 \leq i_1, \ldots, i_N \leq d-1} \operatorname{tr}(M_{i_1} \cdots M_{i_N})\, e_{i_1} \otimes \cdots \otimes e_{i_N} \mbox{,}
  \end{cases}
  \]
  where $\operatorname{Cyc}^N(k^d) \leq (k^d)^{\otimes N}$ is the subspace of cyclically symmetric tensors.
  A cyclically symmetric tensor which lies in the image $\operatorname{uMPS}(D,d,N)$ of $T_{D,d,N}$ is called a \textbf{uniform $D$-matrix product state}.
  
  Following \Cref{rmrk:dualidea} it is desirable to find a factorization of $T_{D,d,N}$
  \begin{center}
    \begin{tikzpicture}
      \coordinate (r) at (4,0);
      \coordinate (d) at (0,-1);
      
      \node (S) {$(k^{D \times D})^d$};
      \node (T) at ($(r)$) {$\operatorname{Cyc}^N(k^d)$};
      \node (X) at ($0.5*(r)+(d)$) {$Z_{D,d}$};
      
      \draw[-stealth'] (S) -- node[above]{$T_{D,d,N}$} (T);
      \draw[-doublestealth] (S) -- node[below left]{$\pi_{D,d}$} (X);
      \draw[-stealth'] (X) -- node [below right]{$\widetilde{T}_{D,d,N}$} (T);
    \end{tikzpicture}
  \end{center}
  where $\pi_{D,d}$ is surjective with high dimensional fibers.
    
  A candidate for such a space $Z_{D,d}$ is the spectrum of the so-called trace algebra $C_{D,d}$ which is generated by traces of products $\operatorname{tr}(M_{i_1},\ldots,M_{i_\ell})$ and where $M_0,\ldots,M_{d-1}$ are general $d \times d$ matrices over $k$.
  The trace algebra $C_{D,d}$ is finitely generated by invariant-theoretic arguments.
  Sibirskii has among other things showed in \cite{Sibirskii1968} that $C_{2,2}$ is \emph{freely} generated by the five traces $s_i \coloneqq \operatorname{tr}(M_i)$, $s_{ij} \coloneqq \operatorname{tr}(M_i M_j)$ for $i,j \in \{0,1\}$.
  This means that $Z_{2,2} \cong k^5$ and $T_{2,2,N}: (k^{2 \times 2})^2 \to \operatorname{Cyc}^N(k^2)$ can be replaced by $\widetilde{T}_{2,2,N}: k^5 \to \operatorname{Cyc}^N(k^2)$ with generic fiber dimension $0$ for $N \geq 4$.
  For more details and background information see \cite{uMPS}.
  
  \[
    T_{2,2,4}:
    \begin{cases}
      \Q^5 & \to \Q^6 \cong \operatorname{Cyc}^4(\Q^2) \\
      \left(
      \begin{smallmatrix}
        s_0 \\ s_1 \\ s_2 \\ s_3 \\ s_4
      \end{smallmatrix}
      \right)
      =
      \left(
      \begin{smallmatrix}
        s_0 \\ s_1 \\ s_{00} \\ s_{01} \\ s_{11}
      \end{smallmatrix}
      \right)
      & \mapsto
      \left(
      \begin{smallmatrix}
        x_{0000} \\
        x_{0001} \\
        x_{0011} \\
        x_{0111} \\
        x_{1111} \\
        x_{0101}
      \end{smallmatrix}
      \right)
      =
      \left(
      \begin{smallmatrix}
        -\frac{1}{2} s_0^4+s_0^2 s_2+\frac{1}{2} s_2^2 \\
        -\frac{1}{2} s_0^3 s_1+\frac{1}{2} s_0 s_1 s_2+\frac{1}{2} s_0^2 s_3
        +\frac{1}{2} s_2 s_3 \\
        -\frac{1}{2} s_0^2 s_1^2+s_0 s_1 s_3+\frac{1}{2} s_2 s_4 \\
        -\frac{1}{2} s_0 s_1^3+\frac{1}{2} s_1^2 s_3+\frac{1}{2} s_0 s_1 s_4
        +\frac{1}{2} s_3 s_4 \\
        -\frac{1}{2} s_1^4+s_1^2 s_4+\frac{1}{2} s_4^2 \\
        -\frac{1}{2} s_0^2 s_1^2+\frac{1}{2} s_1^2 s_2+\frac{1}{2} s_0^2 s_4
        +s_3^2-\frac{1}{2} s_2 s_4
      \end{smallmatrix}
      \right) \mbox{.}
    \end{cases}
  \]
  We have implemented our Algorithm in the package $\mathtt{ZariskiFrames}$ \cite{ZariskiFrames}, which relies on the package $\mathtt{Locales}$ mentioned in \Cref{rmrk:canonical_decomp}.
  The image $\operatorname{uMPS}(2,2,4)$ of $T_{2,2,4}$ is computed by the command $\mathtt{ConstructibleImage}$ (see the notebook \cite{uMPS224}).
  The result is the (not locally closed but) constructible set
  \[
    \operatorname{uMPS}(2,2,4) = \left( V(f) \setminus V(I_1) \setminus V(I_2) \right) \cup V(J) \mbox{,}
  \]
  where
  {\tiny
  \begin{align*}
    f \coloneqq & 2 x_{0011}^6
    -12 x_{0001} x_{0011}^4 x_{0111}
    +16 x_{0001}^2 x_{0011}^2 x_{0111}^2
    +4 x_{0000} x_{0011}^3 x_{0111}^2
    -8 x_{0000} x_{0001} x_{0011} x_{0111}^3
    \\ &
    +x_{0000}^2 x_{0111}^4
    +4 x_{0001}^2 x_{0011}^3 x_{1111}
    -x_{0000} x_{0011}^4 x_{1111}
    -8 x_{0001}^3 x_{0011} x_{0111} x_{1111}
    +2 x_{0000} x_{0001}^2 x_{0111}^2 x_{1111}
    \\ &
    +x_{0001}^4 x_{1111}^2
    +8 x_{0001} x_{0011}^3 x_{0111} x_{0101}
    -16 x_{0001}^2 x_{0011} x_{0111}^2 x_{0101}
    -4 x_{0000} x_{0011}^2 x_{0111}^2 x_{0101}
    \\ &
    +4 x_{0000} x_{0001} x_{0111}^3 x_{0101}
    -4 x_{0001}^2 x_{0011}^2 x_{1111} x_{0101}
    +4 x_{0001}^3 x_{0111} x_{1111} x_{0101}
    \\ &
    +8 x_{0000} x_{0001} x_{0011} x_{0111} x_{1111} x_{0101}
    -2 x_{0000}^2 x_{0111}^2 x_{1111} x_{0101}
    -2 x_{0000} x_{0001}^2 x_{1111}^2 x_{0101}
    -x_{0011}^4 x_{0101}^2
    \\ &
    +4 x_{0001}^2 x_{0111}^2 x_{0101}^2
    +4 x_{0000} x_{0011} x_{0111}^2 x_{0101}^2
    +4 x_{0001}^2 x_{0011} x_{1111} x_{0101}^2
    -2 x_{0000} x_{0011}^2 x_{1111} x_{0101}^2
    \\ &
    -4 x_{0000} x_{0001} x_{0111} x_{1111} x_{0101}^2
    +x_{0000}^2 x_{1111}^2 x_{0101}^2
    -2 x_{0000} x_{0111}^2 x_{0101}^3
    -2 x_{0001}^2 x_{1111} x_{0101}^3
    +x_{0000} x_{1111} x_{0101}^4 \mbox{,}
    \\
    I_1 \coloneqq &
    \langle x_{0011}
    -x_{0101},
    4 x_{0001} x_{0111}
    -x_{0000} x_{1111}
    -3 x_{0101}^2,
    \quad
    2 x_{0000} x_{0111}^2
    +2 x_{0001}^2 x_{1111}
    -3 x_{0000} x_{1111} x_{0101}
    -x_{0101}^3,
    \\ &
    4 x_{0001}^3 x_{1111}
    +x_{0000}^2 x_{0111} x_{1111}
    -6 x_{0000} x_{0001} x_{1111} x_{0101}
    +3 x_{0000} x_{0111} x_{0101}^2
    -2 x_{0001} x_{0101}^3 \rangle \mbox{,}
    \\
    I_2 \coloneqq &
    \langle 2 x_{0011}^2 x_{1111}
    -4 x_{0001} x_{0111} x_{1111}
    -x_{0000} x_{1111}^2
    -8 x_{0111}^2 x_{0101}
    +8 x_{0011} x_{1111} x_{0101}
    +3 x_{1111} x_{0101}^2,
    \\ &
    2 x_{0111}^3
    -2 x_{0011} x_{0111} x_{1111}
    +x_{0001} x_{1111}^2
    -x_{0111} x_{1111} x_{0101},
    \\ &
    4 x_{0011} x_{0111}^2
    -6 x_{0001} x_{0111} x_{1111}
    -x_{0000} x_{1111}^2
    -14 x_{0111}^2 x_{0101}
    +12 x_{0011} x_{1111} x_{0101}
    +5 x_{1111} x_{0101}^2,
    \\ &
    2 x_{0001} x_{0111}^2
    -2 x_{0001} x_{0011} x_{1111}
    +x_{0000} x_{0111} x_{1111}
    -x_{0001} x_{1111} x_{0101},
    \quad
    x_{0000} x_{0111}^2
    -x_{0001}^2 x_{1111},
    \\ &
    2 x_{0011}^2 x_{0111}
    -4 x_{0001} x_{0011} x_{1111}
    +x_{0000} x_{0111} x_{1111}
    +2 x_{0001} x_{1111} x_{0101}
    -x_{0111} x_{0101}^2,
    \\ &
    16 x_{0001} x_{0011} x_{0111}
    -16 x_{0001}^2 x_{1111}
    -8 x_{0000} x_{0011} x_{1111}
    +2 x_{0011}^2 x_{0101}
    -12 x_{0001} x_{0111} x_{0101}
    +19 x_{0000} x_{1111} x_{0101}
    -x_{0101}^3,
    \\ &
    2 x_{0001}^2 x_{0111}
    -2 x_{0000} x_{0011} x_{0111}
    +x_{0000} x_{0001} x_{1111}
    -x_{0000} x_{0111} x_{0101},
    \\ &
    8 x_{0011}^3
    -16 x_{0001}^2 x_{1111}
    -12 x_{0000} x_{0011} x_{1111}
    +6 x_{0011}^2 x_{0101}
    -4 x_{0001} x_{0111} x_{0101}
    +25 x_{0000} x_{1111} x_{0101}
    -4 x_{0011} x_{0101}^2
    -3 x_{0101}^3,
    \\ &
    2 x_{0001} x_{0011}^2
    -4 x_{0000} x_{0011} x_{0111}
    +x_{0000} x_{0001} x_{1111}
    +2 x_{0000} x_{0111} x_{0101}
    -x_{0001} x_{0101}^2,
    \\ &
    2 x_{0000} x_{0011}^2
    -4 x_{0000} x_{0001} x_{0111}
    -x_{0000}^2 x_{1111}
    -8 x_{0001}^2 x_{0101}
    +8 x_{0000} x_{0011} x_{0101}
    +3 x_{0000} x_{0101}^2,
    \\ &
    4 x_{0001}^2 x_{0011}
    -6 x_{0000} x_{0001} x_{0111}
    -x_{0000}^2 x_{1111}
    -14 x_{0001}^2 x_{0101}
    +12 x_{0000} x_{0011} x_{0101}
    +5 x_{0000} x_{0101}^2,
    \\ &
    2 x_{0001}^3
    -2 x_{0000} x_{0001} x_{0011}
    +x_{0000}^2 x_{0111}
    -x_{0000} x_{0001} x_{0101} \rangle \mbox{,}
    \\
    J \coloneqq &
    \langle x_{0011} -x_{0101},
    x_{0001} x_{1111} -x_{0111} x_{0101},
    x_{0000} x_{1111} -x_{0101}^2,
    x_{0111}^2 -x_{1111} x_{0101},
    \\ &
    x_{0001} x_{0111} -x_{0101}^2,
    x_{0000} x_{0111} -x_{0001} x_{0101},
    x_{0001}^2 -x_{0000} x_{0101} \rangle \mbox{.}
  \end{align*}
  }%
  Our implementation $\mathtt{ConstructibleImage}$ finished in less than $15$ seconds (using \textsc{Singular}'s Gröbner engine in the background \cite{Singular412} on an Intel Xeon E5-2687W v4).
  We stopped
  \begin{itemize}
    \item $\mathtt{totalImage}$ (\textsc{Macaulay2} package $\mathtt{TotalImage}$ \cite{ComputingImages})\footnote{The image was computed in \cite[Appendix B]{uMPS} with the help of an ad hoc representation theoretic argument.}
    \item $\mathtt{grobcov}$ (\textsc{Singular} package $\mathtt{grobcov.lib}$ \cite{MW10})
    \item $\mathtt{PolynomialMapImage}$ (\textsf{Maple} package $\mathtt{RegularChains}$ \cite{ctd})
    \item $\mathtt{Comprehensive}$ (\textsf{Maple} package $\mathtt{AlgebraicThomas}$ \cite{thomasalg_jsc})
  \end{itemize}
  after 24 hours.
  
  The image $\operatorname{uMPS}(2,2,5)$ of $T_{2,2,5}$ is computed by the command $\mathtt{ConstructibleImage}$ in around $6$ minutes (using \textsc{Singular}'s Gröbner engine in the background \cite{Singular412}).
  The result is the (not locally closed but) constructible set
  \[
    \operatorname{uMPS}(2,2,5) = \left( V(\widetilde{I}) \setminus V(\widetilde{I}') \right) \cup V(\widetilde{J}) \mbox{,}
  \]
  where the output is too big to reproduce here (see the notebook \cite{uMPS225}).
\end{exmp}

\section{Algebraic group actions} \label{sec:orbits}

The approach presented in this paper is well-suited to compute orbits of affine algebraic group actions $\alpha: Y\times G\to Y$, where $G$ is an affine algebraic group $G$ and $Y$ an affine variety.
The computation of the $G$-orbit of an element $y\in Y$ is a special case of our setting as the orbit $y G$ is nothing but the image of the \textbf{orbit morphism}
\begin{align*}
	\alpha_y:G\to Y:g\mapsto y g.
\end{align*}

\begin{prop} \label{prop:Grbh}
  A $G$-invariant relative boundary hull of an orbit is the relative boundary of the orbit.
\end{prop}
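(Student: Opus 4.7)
The plan is to establish the two inclusions $\underline{\partial}(yG) \subseteq D$ and $D \subseteq \underline{\partial}(yG)$ for any $G$-invariant relative boundary hull $D$ of the orbit $yG$. The first inclusion will come directly from the definition of a relative boundary hull together with the local closedness of orbits, while the second will exploit $G$-invariance of $D$ in an essential way.

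First I would invoke \Cref{prop:Borel}, which states that the orbit $yG$ is locally closed in $Y$. This means that $\overline{yG} \setminus yG$ is already closed, so the relative boundary satisfies $\underline{\partial}(yG) = \overline{\overline{yG} \setminus yG} = \overline{yG} \setminus yG$. By the definition of a relative boundary hull we have $\overline{yG} \setminus yG \subseteq D$, which gives the first inclusion $\underline{\partial}(yG) \subseteq D$ at once.

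For the reverse inclusion I would use the $G$-invariance of $D$. The orbit closure $\overline{yG}$ is itself $G$-invariant, since the action of each $g \in G$ on $Y$ is a homeomorphism (indeed an isomorphism of varieties) and therefore preserves closures of $G$-stable sets. Now suppose for contradiction that $D$ meets $yG$, say $yg \in D$ for some $g \in G$. Then $G$-invariance of $D$ forces $yG = (yg)G \subseteq D$, and since $D$ is closed this yields $\overline{yG} \subseteq D$, contradicting the strict inclusion $D \subsetneq \overline{yG}$ demanded by the definition of a relative boundary hull. Hence $D \cap yG = \emptyset$, i.e.\ $D \subseteq \overline{yG} \setminus yG = \underline{\partial}(yG)$.

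Combining the two inclusions yields $D = \underline{\partial}(yG)$, as desired. The main subtlety is really just the appeal to the local closedness of orbits (\Cref{prop:Borel}); without it one would only obtain $\underline{\partial}(yG) \subseteq D$ up to taking closures, and the clean equality $\underline{\partial}(yG) = \overline{yG} \setminus yG$ would fail. Everything else is a short dichotomy argument driven purely by $G$-invariance and properness of the relative boundary hull.
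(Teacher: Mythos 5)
Your second inclusion is precisely the paper's entire proof: by $G$-invariance, if $D$ met the orbit it would contain all of $yG$, hence (being closed) all of $\overline{yG}$, contradicting $D \subsetneq \overline{yG}$. The problem lies in your first step. In this paper \Cref{prop:Borel} (local closedness of orbits) is not available when \Cref{prop:Grbh} is proved: it is derived as a \emph{corollary} of \Cref{prop:Grbh} together with \Cref{thm:lca_of_image}, so invoking it here is circular within the paper's development (unless you lean on the external reference to Borel's book, which would defeat the paper's stated purpose of reproving that result).

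Fortunately the appeal to local closedness is also unnecessary, so the gap is easy to close. For $\underline{\partial}(yG) \subseteq D$ you only need that $D$ is closed (which is part of the definition of a relative boundary hull): from $\overline{yG}\setminus yG \subseteq D$ you get $\underline{\partial}(yG) = \overline{\overline{yG}\setminus yG} \subseteq \overline{D} = D$ directly, with no equality $\underline{\partial}(yG)=\overline{yG}\setminus yG$ required. For the reverse inclusion, your dichotomy gives $D \cap yG = \emptyset$, and since $D \subseteq \overline{yG}$ by definition, $D \subseteq \overline{yG}\setminus yG \subseteq \underline{\partial}(yG)$, again without local closedness. Your closing remark that the argument would ``fail up to taking closures'' without \Cref{prop:Borel} is therefore mistaken; taking closures is free because $D$ is closed. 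Note also that, once corrected, your two inclusions actually show $D = \overline{yG}\setminus yG$ is closed, which is exactly how the paper then deduces \Cref{prop:Borel} from \Cref{prop:Grbh} -- the logical order is the reverse of the one you assumed.
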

\begin{proof}
  If a $G$-invariant relative boundary hull contains a point of the orbit then it must contain the entire orbit, contradicting the definition of a relative boundary hull.
\end{proof}

Using the existence of relative boundary hulls from \cref{thm:lca_of_image} and \Cref{prop:Grbh} one can easily reprove the following proposition for $Y$ and $G$ both affine over $\Spec \Z$.
\begin{coro}[{\cite[Proposition in Section I.1.8]{BorelLAG}}] \label{prop:Borel}
  The orbit of an affine algebraic group action is locally closed.
\end{coro}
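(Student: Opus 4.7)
The plan is to combine the existence of a relative boundary hull from \Cref{thm:lca_of_image} with a $G$-invariance (symmetrization) trick, after which the argument already used in the proof of \Cref{prop:Grbh} forces the complement of the orbit in its closure to be closed, and therefore the orbit itself to be locally closed.

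Concretely, fix $y\in Y$ and consider the orbit morphism $\alpha_y\colon G\to Y$, $g\mapsto y g$, which is a morphism of affine varieties. By \Cref{thm:lca_of_image} applied to $\alpha_y$ with $C=G$, the orbit $yG$ admits a relative boundary hull, i.e., a closed subset $D\subseteq Y$ satisfying
\begin{equation*}
\overline{yG}\setminus yG \subseteq D \subsetneq \overline{yG}.
\end{equation*}
Both $yG$ and $\overline{yG}$ are $G$-invariant, hence so is $\overline{yG}\setminus yG$. I would then replace $D$ by its $G$-symmetrization
\begin{equation*}
D' \coloneqq \bigcap_{g\in G} g D,
\end{equation*}
which is closed as an intersection of closed sets, $G$-invariant by construction, and satisfies $D'\subseteq D\subsetneq\overline{yG}$. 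Moreover $D'$ still contains $\overline{yG}\setminus yG$, because each translate $gD$ does by $G$-invariance of $\overline{yG}\setminus yG$. Thus $D'$ is a $G$-invariant relative boundary hull of $yG$.

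Since $D'\subsetneq\overline{yG}$ and $D'$ is $G$-invariant, $D'$ cannot contain any point of the orbit $yG$: otherwise it would contain the whole orbit and hence its closure $\overline{yG}$, a contradiction. Combined with $D'\subseteq\overline{yG}$, this gives $D'\subseteq\overline{yG}\setminus yG$, and together with the reverse inclusion above one obtains $D' = \overline{yG}\setminus yG$. In particular $\overline{yG}\setminus yG$ is closed, so $yG = \overline{yG}\setminus D'$ is the intersection of the closed set $\overline{yG}$ with the open set $Y\setminus D'$, and therefore locally closed. The only point that needs a brief check is that the symmetrized set $D'$ is still a relative boundary hull; both defining conditions follow at once from $D'\subseteq D$ (for the strict containment in $\overline{yG}$) and from the $G$-invariance of $\overline{yG}\setminus yG$ (for the containment of the relative boundary points), so there is no real obstacle.
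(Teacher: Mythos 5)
Your proposal is correct and follows essentially the same route as the paper: take a relative boundary hull $D$ of $yG$ from \Cref{thm:lca_of_image}, symmetrize it by intersecting its $G$-translates to obtain a $G$-invariant relative boundary hull, and invoke (or, in your case, re-derive) \Cref{prop:Grbh} to conclude that this intersection equals $\overline{yG}\setminus yG$, so the orbit is locally closed. The only cosmetic remark is that, since the paper's action is a right action $g\mapsto yg$, the translates should be written $Dg$ rather than $gD$.
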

\begin{proof}
  The orbit $y G = \alpha_y(G)$ is the image of a polynomial map and hence admits a relative boundary hull $D$.
  It follows that $D g$ is also a relative boundary hull for all $g \in G$.
  Hence, $\bigcap_{g \in G} D g$ is a $G$-invariant relative boundary hull of $y G$ and the relative boundary of $y G$ by \Cref{prop:Grbh}.
  It follows that $y G = \overline{y G} \setminus \bigcap_{g \in G} D g$ is locally closed (written in canonical form).
\end{proof}

The proof shows how to avoid the iteration in Algorithm~\ref{alg:iteration}:

\begin{coro} \label{coro_intersect_not_iterate}
  Let $D$ be a relative boundary hull of $yG$.
  Then there exists finitely many group elements $g_1,\ldots, g_\ell$ such that $yG = \overline{yG} \setminus (D \cap D g_1 \cap \ldots \cap D g_\ell)$.
  In particular, the iteration in Algorithm~\ref{alg:LCA} can be avoided, i.e., we only need to apply Algorithm~\ref{alg:LCA} once.
\end{coro}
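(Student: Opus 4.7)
My plan is to deduce this from the proof of \Cref{prop:Borel} together with the Noetherianity of $\overline{yG}$.

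First, recall that for every $g \in G$ right-translation by $g$ is an automorphism of $Y$ that stabilizes the orbit $yG$ set-theoretically. Consequently, if $D$ is a relative boundary hull of $yG$, then so is $Dg$ for each $g \in G$, since translation preserves both $\overline{yG}$ and the non-containment of any point of $yG$. The proof of \Cref{prop:Borel} then shows that the intersection $\bigcap_{g \in G} D g$ is a $G$-invariant relative boundary hull of $yG$, which by \Cref{prop:Grbh} coincides with the relative boundary $\overline{yG} \setminus yG$ (the latter being closed by the local closedness established in \Cref{prop:Borel}).

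Next I would invoke Noetherianity of the ambient space $\Spec B$, hence of the closed subset $\overline{yG}$. The collection of closed subsets $\{ D \cap D g_1 \cap \cdots \cap D g_\ell \mid \ell \geq 0,\ g_i \in G \}$ of $\overline{yG}$ is closed under further finite intersection and partially ordered by reverse inclusion; by the descending chain condition it admits a minimal element $D \cap D g_1 \cap \cdots \cap D g_\ell$. Any additional translate $D g$ satisfies $D g \cap (D \cap D g_1 \cap \cdots \cap D g_\ell) = D \cap D g_1 \cap \cdots \cap D g_\ell$ by minimality, so this finite intersection equals $\bigcap_{g \in G} D g = \overline{yG} \setminus yG$. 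Complementing inside $\overline{yG}$ yields the claimed formula
\[
  yG = \overline{yG} \setminus \bigl( D \cap D g_1 \cap \cdots \cap D g_\ell \bigr) \mbox{.}
\]

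For the algorithmic consequence, observe that a single call to \textbf{LocallyClosedApproximationOfProjection} applied to the graph of $\alpha_y$ already returns a relative boundary hull $D$ together with $\overline{yG}$; intersecting $D$ with finitely many translates $D g_i$ then produces the exact (not merely approximating) boundary, so no outer iteration as in Algorithm~\ref{alg:iteration} is required. The conceptual content of the proof is straightforward once $G$-invariance and Noetherianity are in hand; the genuine obstacle, which I do not address here, is a \emph{constructive} selection of witnesses $g_1,\ldots,g_\ell$ — naively testing group elements at random is feasible but the efficient determination of a small sufficient set is precisely the improvement referred to in \Cref{prop_closure_pi_Gamma_0} and will hinge on exploiting the specific structure of $G$ and of $D$.
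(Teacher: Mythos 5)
Your proof is correct and follows essentially the same route as the paper: Noetherianity reduces the intersection $\bigcap_{g \in G} D g$ from the proof of \Cref{prop:Borel} to a finite one, which by \Cref{prop:Grbh} is the relative boundary, and complementing inside $\overline{yG}$ gives the formula. The only thing the paper adds is the concrete stopping criterion you leave open — enlarge the finite intersection by random translates until $\alpha_y^{-1}(D \cap D g_1 \cap \ldots \cap D g_\ell) = \emptyset$ — which makes the choice of witnesses $g_1,\ldots,g_\ell$ effectively verifiable.
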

\begin{proof}
The Noetherianity of $Y$ shows that the intersection defining the relative boundary $\bigcap_{g \in G} D g$ in the previous proof is finite and can be computed by choosing random elements $g_1, \ldots, g_\ell$ until $\alpha_y^{-1}( D \cap D g_1 \cap \ldots \cap D g_\ell)$ is empty.
\end{proof}

We consider an instructive example.

\begin{exmp}[Nilpotent orbit of type $A_1$]\label{example_jordan}
	Consider the special linear group $G=\operatorname{SL}_2$ acting via conjugation on the affine space of $2 \times 2$-matrices $Y = \operatorname{Mat}^{2 \times 2} \cong \AA^4$.
	We are interested in the orbit of $y=\begin{bmatrix} 0 & 1 \\ 0 & 0 \end{bmatrix}$, i.e.\ in the set all matrices similar to this Jordan block (independent of the characteristic).
	Hence, consider the orbit morphism
	\begin{align*}
	\alpha_y: G\to Y: 
	\begin{bmatrix} g_{1,1} & g_{1,2} \\ g_{2,1} & g_{2,2} \end{bmatrix}
	& \mapsto 
	\begin{bmatrix} g_{2,2} & -g_{1,2} \\ -g_{2,1} & g_{1,1} \end{bmatrix}
	\begin{bmatrix} 0 & 1 \\ 0 & 0 \end{bmatrix}
	\begin{bmatrix} g_{1,1} & g_{1,2} \\ g_{2,1} & g_{2,2} \end{bmatrix} = 
	\begin{bmatrix} g_{2,1}g_{2,2} & g_{2,2}^2 \\ -g_{2,1}^2 & -g_{2,1}g_{2,2} \end{bmatrix}.
	\end{align*}
	The graph of the orbit morphism $\alpha_y$ is given by 
	\begin{align*}
	I=\left\langle 
	\begin{matrix}
		& g_{2,1}g_{2,2}-b_{1,1}, 	& g_{2,2}^2-b_{1,2}, \\
		& -g_{2,1}^2-b_{2,1}, 		& -g_{2,1}g_{2,2}-b_{2,2}, \\
		& \multicolumn{2}{c}{g_{1,1}g_{2,2}-g_{1,2}g_{2,1}-1} 
	\end{matrix}
	\right\rangle \unlhd B[g_{1,1},g_{1,2},g_{2,1},g_{2,2}]
	\end{align*}
	for\footnote{$G$ and $Y$ are both defined over $\Z$.} $B=\Z[b_{1,1},b_{1,2},b_{2,1},b_{2,2}]$.
	This yields the closure of the orbit 
	\begin{align*}
		I\cap B=\langle b_{1,1}+b_{2,2}, b_{1,1}b_{2,2}-b_{1,2}b_{2,1}\rangle,
	\end{align*}
	i.e., the matrices with zero trace and zero determinant.
	
	We make the dimension of a generic fiber zero-dimensional by intersecting with $\{g_{1,1}=0\}$.
	This does not change the image closure; $\langle I,g_{1,1}\rangle\cap B=I\cap B$.
	The corresponding relative boundary hull $D_1=\{b_{1,1}=b_{2,1}=b_{2,2}=0\}$ is too big, as $y$ is contained in the orbit $yG$, but also in $D_1$.
	The action of $\begin{bmatrix} 0 & 1 \\ -1 & 0 \end{bmatrix}\in G$ yields a second relative boundary hull $D_2=D_1g=\{b_{1,1}=b_{1,2}=b_{2,2}=0\}$.
	The intersection
	\begin{align*}
		D=D_1\cap D_2=\{b_{1,1}=b_{1,2}=b_{2,1}=b_{2,2}=0\}
	\end{align*}
	is the minimal relative boundary hull, i.e., the relative boundary, and the orbit is locally closed:
	\begin{align*}
		y G = \{ b_{1,1}+b_{2,2} = b_{1,1}b_{2,2}-b_{1,2}b_{2,1} = 0 \}\setminus \{b_{1,1}=b_{1,2}=b_{2,1}=b_{2,2}=0\} \mbox{.}
	\end{align*}
\end{exmp}

For injective orbit morphisms, i.e., for principal orbits we can say more.

\begin{prop}\label{prop:inj_orb_mor}
  If the orbit morphism is injective, then Algorithm~\ref{algorithm_zero_dimensional_fibers_heuristic} which passes from $\Gamma \leadsto \Gamma_0$ can be skipped in Algorithm~\ref{alg:LCA} and the latter yields the relative boundary of the orbit.
  In particular, Algorithm~\ref{alg:iteration} terminates after a single call of Algorithm~\ref{alg:LCA}.
\end{prop}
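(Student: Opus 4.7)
The plan is to deduce both claims from \Cref{prop:Grbh} together with the structure of Algorithm~\ref{alg:LCA}. First I would note that the injectivity of $\alpha_y$ forces every fiber of $\pi_{|\Gamma}: \Gamma \to Y$ to be either a singleton (over $yG$) or empty, hence $0$-dimensional throughout. The three requirements placed on the output of Algorithm~\ref{algorithm_zero_dimensional_fibers_heuristic} in \Cref{subsection_algebraic_dimension} are therefore satisfied trivially by the choice $\Gamma_0 \coloneqq \Gamma$, so the preprocessing step may be skipped and Algorithm~\ref{alg:LCA} proceeds directly to return $\overline{yG}$ together with the relative boundary hull $D \coloneqq \widehat{\pi}(\Gamma^\infty)$.

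The crux of the argument is to promote $D$ to the full relative boundary; by \Cref{prop:Grbh} it suffices to exhibit $D$ as a $G$-invariant subset of $Y$. For this I would introduce the diagonal $G$-action on $Y \times G$ defined by $(z, g) \cdot h \coloneqq (zh, gh)$; this preserves $\Gamma$ and makes $\pi$ equivariant. Fixing a closed Cayley-style embedding $G \hookrightarrow \operatorname{GL}_N \hookrightarrow \AA^{N^2+1}$ via $g \mapsto (g, \det(g)^{-1})$, right multiplication by any $h \in G$ becomes a linear automorphism of the ambient affine space, so it extends to a projective automorphism of $\PP^{N^2+1}_Y$ that preserves the hyperplane $H$ at infinity. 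The induced action preserves the projective closure $\widehat{\Gamma}$ (the closure of an invariant set), hence also $\Gamma^\infty = \widehat{\Gamma} \cap H$; and since $\widehat{\pi}$ is equivariant, the image $D = \widehat{\pi}(\Gamma^\infty)$ is $G$-invariant in $Y$. Applying \Cref{prop:Grbh} yields $D = \overline{yG} \setminus yG$.

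The second claim then follows at once: from $D \cap yG = \emptyset$ we obtain $\pi^{-1}(D) \cap \Gamma = \emptyset$, so the outer loop of Algorithm~\ref{alg:iteration} receives empty input on its second pass and terminates after the initial call to Algorithm~\ref{alg:LCA}. I expect the main obstacle to be the lifting step in the middle paragraph: for an arbitrary closed embedding $G \hookrightarrow \AA^n$, right translation on $G$ need not extend to a morphism of $\PP^n_Y$, and hence $\widehat{\Gamma}$ need not be $G$-stable. Fixing the Cayley-type embedding at the outset, under which right multiplication is ambient-linear, resolves this and makes the $G$-equivariance of $\widehat{\Gamma}$ and $\Gamma^\infty$ transparent.
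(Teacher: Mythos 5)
Your proof follows the same overall strategy as the paper: injectivity makes every fiber of $\pi_{|\Gamma}$ a singleton or empty, so the reduction $\Gamma \leadsto \Gamma_0$ is vacuous, and the heart of the matter is the $G$-invariance of the hull $D = \widehat{\pi}(\Gamma^\infty)$, after which \Cref{prop:Grbh} and the emptiness of $\Gamma \cap \pi^{-1}(D)$ give both conclusions exactly as you argue. The difference lies in how invariance is obtained. The paper's (terse) argument is that, once Algorithm~\ref{algorithm_zero_dimensional_fibers_heuristic} is skipped, Algorithm~\ref{alg:LCA} involves no choices and is intrinsic, so a $G$-invariant input yields a $G$-equivariant output; what makes this precise is that $\widehat{\pi}(\Gamma^\infty)$ can be identified with the locus of points of $\Spec B$ over which $\pi_{|\Gamma}$ fails to be proper, a set depending only on the morphism $\pi_{|\Gamma}$ and not on the affine embedding, so equivariance of $\Gamma$ under the diagonal action already forces $D h = D$. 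You instead make equivariance explicit by fixing a Cayley-type embedding $G \hookrightarrow \operatorname{GL}_N \hookrightarrow \AA^{N^2+1}$, under which right translation is ambient-linear and hence extends to $\PP^{N^2+1}_Y$ preserving the hyperplane at infinity, so that $\widehat{\Gamma}$, $\Gamma^\infty$ and $D$ are visibly stable; this is correct, and your observation that $\widehat{\Gamma}$ need not be $G$-stable for an arbitrary embedding is a legitimate subtlety the paper glosses over. The trade-off is one of scope: as written, your argument proves the proposition only when the algorithm is run in such an embedding (a harmless preprocessing step, since every affine algebraic group admits one), whereas the embedding-independence of $D$ gives the statement for an arbitrary input presentation; note in particular that the obstacle you flag concerns only the auxiliary projective objects $\widehat{\Gamma}$ and $\Gamma^\infty$, while the final output $D$ remains $G$-invariant even for embeddings where they are not stable.
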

\begin{proof}
  Since the orbit morphism is injective, the dimension of any non-empty fiber is zero and Algorithm~\ref{algorithm_zero_dimensional_fibers_heuristic} is not needed, i.e., $\Gamma_0 = \Gamma$.
  Now, Algorithm~\ref{alg:LCA} is devoid of any choices and works intrinsically with $G$-invariant inputs.
  Thereby also its output, the relative boundary hull, is $G$-equivariant and therefore is the relative boundary of the orbit by \Cref{prop:Grbh}.
\end{proof}

\subsection{Another heuristic for computing a generic affine subspace}\label{subsect:generic_affine}

The algorithmic approach of this paper can be improved for orbits of irreducible groups: Reduce the dimension of the fiber to zero by intersecting with a complement of the embedded tangent space of the stabilizer at the identity element (=complement of the Lie algebra of the stabilizer).

Recall that orbits of groups are homogeneous and smooth, and the non-empty fibers under
\begin{align*}
	\alpha_y:G\to Y:g\mapsto yg
\end{align*}
for $y\in Y$ are isomorphic \cite[Proposition in Section I.1.8]{BorelLAG}.
Hence, we compute the unique fiber dimension locally via linear algebra at $(y,1_G)\in\Gamma\subseteq Y \times G$, a point in the graph $\Gamma$ of $\alpha_y$.
Then, again via linear algebra, we compute in the tangent space $T_{(y,1_G)}\Gamma$ at $(y,1_G)$ a complement $\widetilde{L}$ to the tangent space $T_{(y,1_G)}F \leq T_{(y,1_G)} \Gamma$ of the fiber at $F \coloneqq \pi^{-1}(y)$.
The tangent space $T_{(y,1_G)}F$ along the fiber is computed by considering only derivatives in the direction of the group $G$.
For the hyperplane $L:=(y,1_G)+\widetilde{L}$, the set $\Gamma_0\coloneqq\Gamma\cap L$ satisfies conditions \ref{condition_0_0}, \ref{condition_0_1}, and \ref{condition_0_2} from \Cref{subsection_algebraic_dimension}.
Interpreting $L\subseteq G$ yields almost a system of representatives of $G_y\backslash G$ (some classes outside an open dense set are met more or less than once).
We formalize this in an algebraic language in Algorithm~\ref{algo:gs}.

\begin{algorithm}[H] \label{algo:gs}
	\SetKwIF{If}{ElseIf}{Else}{if}{then}{elif}{else}{}%
	\DontPrintSemicolon
	\SetKwProg{IntersectWithGenericAffineSubspace}{IntersectWithGenericAffineSubspace}{}{}
	\LinesNotNumbered
	\KwIn{
		The graph $\Gamma \coloneqq \Gamma_{\alpha_y} \subseteq Y\times G$ of the action homomorphism $\alpha_y:G\to Y,\, g\mapsto gy$ of an irreducible algebraic group $G$ on an algebraic variety $Y$.\newline
		We consider $G$ resp.\  $Y$ given by their coordinate rings $k[x_1,\ldots,x_n]/J_G$ resp.\ $k[b_1,\ldots,b_m]/J_Y$, the graph $\Gamma$ by its vanishing ideal 
		\begin{align*}
		I=\langle f_1,\ldots,f_\ell\rangle\unlhd k[b_1,\ldots,b_m,x_1,\ldots,x_n]/(J_G+J_Y),
		\end{align*}
		and the points $1_G\in G$ resp.\ $y\in Y$ by coordinates $(\xi_1,\ldots,\xi_n)\in k^n$ resp.\ $(\beta_1,\ldots,\beta_m)\in k^m$.
	}
	\KwOut{
		a closed affine subset $\Gamma_0\subseteq\Gamma$ given by an ideal 
		\begin{align*}
			I_0\unlhd k[b_1,\ldots,b_m,x_1,\ldots,x_n]/(J_G+J_Y)
		\end{align*}
		such that \Cref{condition_0_0}, \Cref{condition_0_1}, and \Cref{condition_0_2} from \Cref{subsection_algebraic_dimension} are satisfied.
	}
	\IntersectWithGenericAffineSubspace(){($f_1,\ldots,f_\ell,\xi_1,\ldots,\xi_n,\beta_1,\ldots,\beta_m$)}{
		\nl  $J^I_{x;\xi,\beta} \coloneqq \left(\begin{matrix} \frac{\partial (f_a)_{|b_j = \beta_j}}{\partial x_i} \end{matrix}\right)_{|x_i = \xi_i} \in k^{\ell \times n}$ \tcp*{\nameft{Jacobi}an in fiber direction at $1_G$}
		\nl  $\gamma \coloneqq \mathtt{REF}\left(J^I_{x;\xi,\beta}\right)$
			\tcp*{compute the row echelon form}
		\nl  Define $N \subseteq \{ 1, \ldots, n \}$ as the set of column positions of $\gamma$ without pivots\;
		\nl  $E \coloneqq \langle x_i-\xi_i \mid i \in N \rangle$ \tcp*{define the subspace $L \subseteq \AA_B^n$}
		\nl  $I_0 \coloneqq I + E$ \tcp*{intersect $\Gamma \cap L = \Gamma_0=V(I_0)$}\label{algo:gs:I0}
		\nl  \Return{$I_0$}\;
	}
	\caption{IntersectWithGenericAffineSubspace}
\end{algorithm}

Summing up:
\begin{prop} \label{prop_closure_pi_Gamma_0}
  When applying Algorithm~\ref{alg:LCA} to compute the locally closed projection of $\Gamma \coloneqq \Gamma_{\alpha_y}$ we can replace Algorithm~\ref{algorithm_zero_dimensional_fibers_heuristic} by Algorithm~\ref{algo:gs} which only needs derivatives\footnote{Computing $J^I_{x;\xi,\beta} \coloneqq \left(\begin{matrix} \frac{\partial (f_a)_{|b_j = \beta_j}}{\partial x_i} \end{matrix}\right)_{|x_i = \xi_i}$ is more efficient than computing $J^I_{x;\xi,\beta} \coloneqq \left(\begin{matrix} \frac{\partial f_a}{\partial x_i} \end{matrix}\right)_{|x_i = \xi_i,b_j = \beta_j}$.} and the Gaussian algorithm over a field. In particular, the elimination and primary decomposition needed in Algorithm~\ref{algorithm_zero_dimensional_fibers_heuristic} to compute $\overline{\pi(\Gamma)}$ can be avoided and the orbit closure $\overline{Gy}$ can be computed as the image closure of $\overline{\pi(\Gamma_0)} = \overline{\pi(\Gamma)}$ with an elimination involving the usually smaller $\Gamma_0 \subseteq \Gamma$.
\end{prop}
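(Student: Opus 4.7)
The plan is to verify that $\Gamma_0 \coloneqq V(I_0)$ produced by Algorithm~\ref{algo:gs} already satisfies the three properties \ref{condition_0_0}, \ref{condition_0_1}, \ref{condition_0_2} from \Cref{subsection_algebraic_dimension}, after which the stated algorithmic simplifications follow. Since $G$ is an irreducible algebraic group, the graph $\Gamma$ is an irreducible smooth variety with $\Gamma \cong G$ via the second projection; in particular $(y,1_G) \in \Gamma$ is a smooth point with $\dim T_{(y,1_G)}\Gamma = \dim G$. By the homogeneity of the orbit \cite[Proposition in Section I.1.8]{BorelLAG} all non-empty fibers of $\pi_{|\Gamma}$ are translates of the stabilizer $G_y$, so they all have the same dimension $d \coloneqq \dim G_y$, and the fiber $F \coloneqq \pi^{-1}(y)\cap \Gamma$ is smooth at $(y,1_G)$ with $\dim T_{(y,1_G)} F = d$.

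Next I would interpret Algorithm~\ref{algo:gs} in linear-algebra terms. The Jacobian $J^I_{x;\xi,\beta}$ describes the derivatives of the defining equations of $\Gamma$ at $(y,1_G)$ restricted to the fibre directions, so its kernel is precisely $T_{(y,1_G)} F$. Putting $J^I_{x;\xi,\beta}$ into row echelon form identifies the set $N$ of non-pivot columns; they are in bijection with free parameters of $\ker J^I_{x;\xi,\beta}$, hence $|N| = d$. The ideal $E = \langle x_i - \xi_i \mid i \in N \rangle$ therefore cuts out an affine subspace $L \subseteq \AA_B^n$ of relative codimension $d$ containing $(y,1_G)$, and by construction its tangent space at $(y,1_G)$ is the coordinate subspace spanned by the pivot directions, which meets $T_{(y,1_G)} F$ only in the origin. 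Consequently $L$ meets $F$ transversally at $(y,1_G)$.

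From the transversality I would derive the three required properties. Property \ref{condition_0_0} is trivial since $I_0 \supseteq I$. Let $\Gamma_0'$ be the irreducible component of $\Gamma_0 = \Gamma \cap L$ containing the transverse point $(y,1_G)$; by transversality at a smooth point
\[
  \dim \Gamma_0' = \dim \Gamma - \codim_{\AA_B^n} L = \dim G - d = \dim \overline{yG}.
\]
Upper semicontinuity of fiber dimension applied to $\pi_{|\Gamma_0'}$ forces the generic fiber over $\overline{\pi(\Gamma_0')}$ to be $0$-dimensional (since it is $0$-dimensional at $(y,1_G)$), which is \ref{condition_0_1}. For \ref{condition_0_2} combine $\overline{\pi(\Gamma_0')} \subseteq \overline{\pi(\Gamma)} = \overline{yG}$ with the fiber-dimension formula
\[
  \dim \overline{\pi(\Gamma_0')} = \dim \Gamma_0' - 0 = \dim \overline{yG};
\]
as $\overline{yG}$ is irreducible this forces $\overline{\pi(\Gamma_0')} = \overline{yG}$ and therefore $\overline{\pi(\Gamma_0)} = \overline{\pi(\Gamma)}$.

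Finally I would read off the algorithmic statements. Because the choice of $L$ is now intrinsic and provably correct at the single point $(y,1_G)$, none of the trial-and-error steps in Algorithm~\ref{algorithm_zero_dimensional_fibers_heuristic} are needed, in particular the primary decomposition (used there only to split $\Gamma$ when a hyperplane fails) is skipped entirely; all work of Algorithm~\ref{algo:gs} amounts to evaluating partial derivatives at a rational point and Gaussian elimination over $k$. The subsequent elimination step in Algorithm~\ref{alg:LCA} producing $\overline{\pi(\Gamma_0)} = \overline{Gy}$ still has to be performed, but now on the smaller ideal $I_0 \supsetneq I$. The main obstacle I anticipate is keeping the transversality--to--semicontinuity passage clean, especially the argument that the component $\Gamma_0'$ identified locally at $(y,1_G)$ is genuinely the one that dominates $\overline{yG}$; this is handled by the dimension count above combined with the fact that the pointwise $0$-dimensional fibre at $y$ forces a Zariski-open locus of $0$-dimensional fibres.
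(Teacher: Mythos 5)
Your proposal is correct and takes essentially the same route as the paper, whose justification for this proposition is the discussion at the start of \Cref{subsect:generic_affine}: orbits are homogeneous and smooth with all nonempty fibers of $\alpha_y$ isomorphic, so the fiber tangent space $T_{(y,1_G)}F=\ker J^I_{x;\xi,\beta}$ and a complementary coordinate subspace $L$ can be found by linear algebra at the single point $(y,1_G)$, and $\Gamma_0=\Gamma\cap L$ then satisfies \ref{condition_0_0}--\ref{condition_0_2}. You simply make explicit the transversality, dimension-count, and semicontinuity steps that the paper leaves implicit.
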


\begin{rmrk}
In principle, this approach of reducing the fiber dimension to zero works for a general $\Gamma$ by considering the tangent space along the fibers of a (smooth) point in $\Gamma$.
However, we face several difficulties not existing in the case of group orbits.
First, unlike the case of group orbits, there is no guarantee that the tangent space along the fiber at the chosen smooth point is of generic fiber dimension, hence one a priori needs to compute the generic fiber dimension, usually via elimination.
Second, in general, we usually do not have a smooth point at hand and even though constructing such an ideal is possible, it is non-trivial and necessary in each step; in case of group orbits we can simply take $(y,1_G)$ and we are done in one step.
Third, the $\xi_i$'s defining the linear subspace $E=\langle x_i-\xi_i \mid i \in N \rangle$ tend to be more complicated than in the group orbit case (there the $\xi_i$'s, representing the identity $1_G$ are usually zero or one).
This problem gets more pronounced in later steps of the algorithm.
\end{rmrk}

\begin{exmp}[Nilpotent cone in type $A_1$, cf.\ \Cref{example_jordan}, revised]\label{example_jordan2}
	Consider again the special linear group $G=SL_2$ acting via conjugation on the affine space of $2 \times 2$-matrices $Y = \operatorname{Mat}_\Z^{2 \times 2} \cong \AA_\Z^4$ and the orbit of $y=\begin{bmatrix} 0 & 1 \\ 0 & 0 \end{bmatrix}$.
	The graph of the action morphism $\alpha_y$ is given by
	\begin{align*}
		I=\left\langle 
		\begin{matrix}
		& g_{2,1}g_{2,2}-b_{1,1}, 	& g_{2,2}^2-b_{1,2}, \\
		& -g_{2,1}^2-b_{2,1}, 		& -g_{2,1}g_{2,2}-b_{2,2}, \\
		& \multicolumn{2}{c}{g_{1,1}g_{2,2}-g_{1,2}g_{2,1}-1} 
		\end{matrix}
		\right\rangle \unlhd B[g_{1,1},g_{1,2},g_{2,1},g_{2,2}]
	\end{align*}
	and we know that $(y,1_G)=(\beta,\xi)=((0,1,0,0);(1,0,0,1))\in\Gamma=V(I)$.
	The Jacobian
	\begin{align*}
		J^I_{x;\xi,\beta} \coloneqq \left(\begin{matrix} \frac{\partial (f_a)_{|b_j = \beta_j}}{\partial x_i} \end{matrix}\right)_{|x_i = \xi_i} =
		\left(\begin{matrix} 
		0 & 0 & 1 & 0 \\
		0 & 0 & 0 & 2 \\
		0 & 0 & 0 & 0 \\
		0 & 0 & -1 & 0\\
		1 & 0 & 0 & 1
		\end{matrix}\right)
	\end{align*}
	suggests $L=\{g_{1,2}=0\}$ .
	This yields a cheaper way to compute the closure of the orbit 
	\begin{align*}
		I\cap B=\langle I,g_{1,2}\rangle\cap B=\langle b_{1,1}+b_{2,2}, b_{1,1}b_{2,2}-b_{1,2}b_{2,1}\rangle
	\end{align*}
	along with a relative boundary hull.
\end{exmp}

\subsection{Epi-mono decomposition} \label{sec:epimono}

Following \Cref{rmrk:dualidea} an ideal approach for computing the orbit would be to use the epi-mono decomposition $G \twoheadrightarrow G_y\backslash G \xhookrightarrow{\iota_y} Y$ of $\alpha_y$, where $G_y$ is the stabilizer of $y \in Y$.
Note that $G_y\backslash G$ is in general quasi-projective and not affine.
Then $y G = \img \alpha_y = \img \iota_y$ and the monic $\iota_y$ has trivially zero dimensional fibers.
This approach has several advantages:
\begin{itemize}
  \item The description of $G_y\backslash G$ as a preparatory step is independent from the space $Y$ which in applications tends to be of much larger dimension than the group $G$.
  \item The fibers are singletons, so the initial step replacing $\Gamma \leadsto \Gamma_0$ is obsolete.
  This removes the arbitrariness in choosing hyperplanes in our algorithm.
  \item Since $\iota_y$ is $G$-equivariant the relative boundary hull will automatically be $G$-equi\-variant and hence will coincide with the relative boundary (and we are done without subsequent ``invariantization'').
\end{itemize}

We will pursue the algebraic compilation of this approach in future work.
The challenge will be to compute the monic $\iota_y$ given $\alpha_y$ without the explicit pre-computation of the rational invariants of the action of $G_y$ on $G$ by multiplication from the left.

\subsection{Finitely many orbits}

Examples \ref{example_jordan} and \ref{example_jordan2} considered a single orbit.
Its closure contained another single orbit (consisting of the zero matrix).
Of course, in general, a group operation can have infinitely many orbits\footnote{e.g., the action of $\operatorname{SL}_2$ via conjugation on $2 \times 2$-matrices over an infinite field.} and even the closure of an orbit can contain infinitely many orbits\footnote{e.g., the right action of $\operatorname{GL}_2$ on $2 \times 2$-matrices over an infinite field yields infinitely many column reduced echelon forms as representatives of orbits.}.
Below we mention some algorithmic benefits applicable when a $G$-space partitions into finitely many orbits.

Assume the action admits finitely many orbits and let $\{y_1,\ldots,y_\ell\}$ be a set of representatives of $G$-orbits.
Then we can compute the closure $\overline{y_i G}$, defined by an ideal $J_i$, of any orbit via elimination.
Note that \Cref{prop_closure_pi_Gamma_0} is applicable for each of these eliminations, with stronger gains in efficiency for smaller orbits.
Determining the containment of the closures of the orbits in one another is an ideal membership test: $\overline{y_iG}\supseteq \overline{y_jG}$ iff $J_i\subseteq J_j$.
Hence, the description of any such orbit $y_iG$ as a locally closed set can be given by the difference
\begin{align*}
	y_iG=\overline{y_iG}\setminus\bigcup_{\overline{y_iG}\supseteq \overline{y_jG}} \overline{y_jG}
\end{align*}
of its closure $\overline{y_iG}$ and the closures of all (maximal) orbits contained in it.
The containment of the closures induces defines the finite stratification of the $G$-space by its orbits.
The nilpotent cone of a semisimple algebraic group $G$ is such space.

Another important class of group operations on varieties with finitely many orbits\footnote{The only connected algebraic groups $G$ guaranteed to always produce finitely many orbits for any operation with a dense orbit are either a torus or a product of a torus and $\textbf{G}_a$, cf.~\cite[Theorem~2]{popov2017algebraic}.} are the torus operations on normal affine toric varieties (cf.\ \cite{CLS11}).
The combinatorial description of toric varieties via the orbit-cone-correspondence is enough to classify orbits.
It also provides a distinguished representative of each orbit.
Furthermore, for the non-maximal orbits there is an explicit description of an epi-mono decomposition of the orbit morphism (cf. \Cref{sec:epimono}).
For the convenience of the reader, we have summarized the relevant results of toric varieties in \Cref{section_primer_toric}.

\begin{exmp}
	We consider the non-smooth cone $\sigma = \operatorname{Cone}(e_1,e_2,e_1+e_3,e_2+e_3) \subset N_\R \equiv \R^3$ with standard lattice $N = \Z e_1 \oplus \Z e_2 \oplus \Z e_3$.
	The dual cone $\sigma^\vee$ is generated by the Hilbert basis $\mathscr{H} = \{ e_1,e_2,e_3,e_1+e_2-e_3 \}$, which defines the orbit morphism
{\small
	\begin{align*}
		\alpha^{\mathscr{H}}_{(1,1,1,1)}:
		\left\{
		\begin{array}{rl}
		\Spec \Z[t_1,t_2,t_3,s] / \langle t_1 t_2 t_3 s - 1 \rangle 
		&
		\to \Spec \Z[b_1,b_2,b_3,b_4], \\
		(t_1, t_2, t_3) &
		\mapsto (b_1,b_2,b_3,b_4) \coloneqq (t_1, t_2, t_3, t_1 t_2 t_3^{-1} \coloneqq t_1^2 t_2^2 s) \mbox{.}
		\end{array}
		\right.
	\end{align*}
}
	Since the orbit morphism $\alpha^\mathscr{H}_{(1,1,1,1)}$ is injective, our algorithm computes the dense torus orbit $O(\{0\})$ in one step (cf.~\Cref{prop:inj_orb_mor})
	\begin{align*}
		O(\{0\}) = \underbrace{V(b_1b_2-b_3b_4)}_{U_\sigma} \setminus( V(b_1,b_4) \cup V(b_2,b_4) \cup V(b_1,b_3) \cup V(b_2,b_3) ) \mbox{,}
	\end{align*}
	which we expect from the theory.
	Indeed, the orbit-cone-correspondence yields for each of the four rays $\rho$ of $\sigma$ the distinguished point $p_\rho$ on the corresponding orbit $O(\rho)$ and the equations of the closure of the orbit.
	\begin{center}
		\begin{tabular}{c|c|c}
			ray generator $u_\rho$ & $p_\rho = (b_1,b_2,b_3,b_4) \in \{0,1\}^4\cap O(\rho)$ & orbit closure $\overline{O(\rho)}$ \\
			\hline
			$e_1$ & $(0,1,1,0)$ & $V(b_1,b_4)$ \\
			$e_2$ & $(1,0,1,0)$ & $V(b_2,b_4)$\\
			$e_1+e_3$ & $(0,1,0,1)$ & $V(b_1,b_3)$\\
			$e_2+e_3$ & $(1,0,0,1)$ & $V(b_2,b_3)$
		\end{tabular}
	\end{center}
	The 3-dimensional dense orbit $O(\{0\})$ and the four 2-dimensional orbits are only two layers of the stratification of the toric variety $\overline{O(\{0\})}$ into orbits.
	The facets defined by two neighboring rays induce four 1-dimensional orbits and the cone $\sigma$ induces a singleton orbit.
	
	The action of the torus on the remaining orbits is given by the orbit morphisms
	{\footnotesize
	\begin{align*}
		\alpha^{\mathscr{H}}_{(a_1,a_2,a_3,a_4)}:
		\left\{
		\begin{array}{rl}
			\Spec \Z[t_1,t_2,t_3,s] / \langle t_1 t_2 t_3 s - 1 \rangle 
			&
			\to \Spec \Z[b_1,b_2,b_3,b_4], \\
			(t_1, t_2, t_3) &
			\mapsto (b_1,b_2,b_3,b_4) \coloneqq (a_1t_1, a_2t_2, a_3t_3, a_4t_1 t_2 t_3^{-1}) \mbox{.}
		\end{array}
		\right.
	\end{align*}
	}
	The kernel of these actions/maps can be determined combinatorial, e.g., the kernel of $\alpha^{\mathscr{H}}_{(0,1,1,0)}$ is generated by $t_1$.
	This yields an epi-mono decomposition of $\alpha^{\mathscr{H}}_{(0,1,1,0)}$ and the two-dimensional quotient torus acts via the monic  $\iota_{(0,1,1,0)}:(t_2, t_3)
	\mapsto (0, t_2, t_3, 0)$.
\end{exmp}

\section*{Acknowledgments}
We would like to thank Florian Heiderich for several discussions on algebraic groups and Sebastian Posur for a careful read and many improvement suggestions.
Our thanks also goes to Sebastian Gutsche and Tom Kuhmichel who contributed to the infrastructure of the software we use.
Last but not least we thank the anonymous referees for their helpful comments and suggestions that contributed to the improvement of this paper.

\appendix

\section{Gröbner basis proof for the dimension reduction}\label{section_exist_planes}

We give a proof, based on Gröbner basis theory, which constructs the affine subspace $L$ necessary to reduce the dimension of fibers (cf.\ \Cref{subsection_algebraic_dimension} and the next proposition).
We assume the coefficients ring $k$ to have effective coset representatives (cf.~\Cref{appendix_computable_ring}).
  As mentioned above, we can always assume $k$ to be \emph{reduced}.
  Furthermore, we can also always assume $k$ to be a \emph{domain}, otherwise it is a product of finitely many domains $k=k_1 \times \cdots \times k_r$ and we can split the projection into $r$ different ones $\Spec k_i \times_{\Spec k} \Gamma \to \Spec (k_i \otimes_k B)$.

\begin{prop} \label{thm:IntersectWithGenericAffineSubspace}
	Let $k$ be an infinite domain, $B = k[b_1, \ldots, b_m]$, and $\Gamma \subseteq \AA_B^n$ be a nonempty \emph{irreducible} affine subset of generic fiber dimension $d$ along $\pi_{|\Gamma}$.
	Let $I = \mathcal{I}(\Gamma) \lhd B[x_1, \ldots, x_n]$ be the vanishing ideal of $\Gamma = V(I) \subseteq \AA^n_B$.
	If $k \cap I = \langle 0 \rangle$, then for each $d' \leq d $ there exists an affine subspace $L \subseteq \AA_B^n$ of dimension $\dim_B L = n - d'$ such that $\Gamma_0 \coloneqq L \cap \Gamma$ satisfies (cf.\ \Cref{subsection_algebraic_dimension})
	\begin{itemize}
		\item the fibers of $\Gamma_0$ along the projection $\pi_{|\Gamma_0}$ are generically $(d - d')$-dimensional;
		\item $\overline{\pi(\Gamma)} = \overline{\pi(\Gamma_0)}$.
	\end{itemize}
	Moreover, $L$ can be chosen constant along the fibers, i.e., there exists an $(n-d')$-dimensional affine subspace $L' \subseteq \AA_k^n$ such that $L \coloneqq L' \times_k \Spec B \subseteq \AA_B^n = \AA_k^n \times_k \Spec B$.
	Finally, the subspace $L'$ can be generically chosen.
\end{prop}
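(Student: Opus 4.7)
Our plan is to prove the proposition by induction on $d'$, the essential case being $d' = 1$; once a single hyperplane is known to drop the generic fiber dimension by one while preserving the projection closure, iterating on the smaller $\Gamma_0$ delivers every $d' \leq d$. Assume therefore $d \geq 1$ and $d' = 1$. Since $\Gamma$ is irreducible, $I$ is prime, and so is $\p \coloneqq I \cap B$; set $R \coloneqq B/\p$ (a domain containing $k$ by the hypothesis $k \cap I = \langle 0 \rangle$) and $K \coloneqq \operatorname{Frac}(R)$, so that the generic fiber $\Gamma_\eta \coloneqq V(I) \times_{\Spec B} \Spec K \subseteq \AA^n_K$ is a $d$-dimensional closed subscheme. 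We seek an affine hyperplane $L' = V(\ell) \subseteq \AA^n_k$ given by $\ell = a_0 + a_1 x_1 + \cdots + a_n x_n$ with $a_i \in k$ whose base change $L'_K \coloneqq V(\ell) \times_k \Spec K$ meets every $d$-dimensional irreducible component of $\Gamma_\eta$ non-emptily and in codimension one. This forces $(L' \times_k \Spec B) \cap \Gamma$ to have generic fiber dimension exactly $d - 1 \geq 0$, so that $\Gamma_0$ still dominates $V(\p) = \overline{\pi(\Gamma)}$, as required.

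That a suitable hyperplane exists \emph{over $K$} is a standard Bertini-type computation: dimension counting on the incidence variety inside the hyperplane parameter space $\AA^{n+1}_K$ shows that, because $d \geq 1$, the locus of bad parameters (those yielding a hyperplane that contains a component of $\Gamma_\eta$ or misses some component) is a proper closed subset, cut out by a single nonzero polynomial $p \in K[a_0, \ldots, a_n]$. Gröbner bases make this step effective: a reduced Gröbner basis $G$ of $I$ for a block elimination order $x \gg B$ yields $\p = \langle G \cap B \rangle$, describes $\Gamma_\eta$ through $G \setminus B$ (whose $x$-leading monomials encode $d$), and, after clearing denominators, allows us to take $p \in R[a_0, \ldots, a_n]$. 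Conversely, once a candidate $\ell$ is chosen, admissibility can be verified by recomputing a Gröbner basis $G'$ of $I + \langle \ell \rangle$ in the same order and checking that $G' \cap B$ still generates $\p$ while exactly one additional $x$-variable now appears among the $x$-leading monomials of $G'$.

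The main hurdle is producing an $\ell$ with coefficients in $k$ rather than only in $K$, and this is exactly where $k$ being an \emph{infinite} domain is essential. Set $F \coloneqq \operatorname{Frac}(k) \subseteq K$ and pick an $F$-basis $(e_\alpha)$ of $K$ as an $F$-vector space; expanding the coefficients of $p$ in this basis gives $p = \sum_\alpha q_\alpha e_\alpha$ with $q_\alpha \in F[a_0, \ldots, a_n]$, and $p \neq 0$ forces at least one $q_\alpha$, call it $q$, to be nonzero. For $a \in k^{n+1}$, the $F$-linear independence of the $e_\alpha$ gives $p(a) \neq 0$ whenever $q(a) \neq 0$, and the Schwartz--Zippel lemma over the infinite field $F$ applied to any finite subset of $k$ of cardinality strictly exceeding $\deg q$ produces such an $a \in k^{n+1}$. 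The resulting $\ell$ defines the required hyperplane $L' \subseteq \AA^n_k$. The closing ``generic choice'' clause of the proposition is then immediate, since the admissible $(a_0, \ldots, a_n)$ form a Zariski-dense open subset of $\AA^{n+1}_k$.
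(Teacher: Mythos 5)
Your proposal is correct in substance, but it proves the proposition by a genuinely different route than the paper. You work geometrically over the function field $K=\operatorname{Frac}(B/(I\cap B))$ of the image: a Bertini-type incidence/dimension count shows that a generic affine hyperplane neither contains a component of the generic fiber $\Gamma_\eta$ nor misses its top-dimensional components, you descend the genericity condition from $K$ to coefficients in $k$ by expanding a nonzero obstruction polynomial in an $F$-basis of $K$ over $F=\operatorname{Frac}(k)$ and invoking Schwartz--Zippel over the infinite domain $k$, and you reach codimension $d'$ by iterating the codimension-one step. The paper instead argues in one shot and purely with Gröbner bases: after renaming, $x_1,\ldots,x_{d'}$ is an independent set modulo $I^e\lhd K[x_1,\ldots,x_n]$, a \emph{comprehensive} Gröbner basis with $x_1,\ldots,x_{d'}$ as parameters is specialized at $x_i\mapsto a_i\in k$ avoiding the vanishing of the product of leading coefficients, and normalizing the specialized basis over the localization $(B/(I\cap B))_{q'}$ exhibits an explicit dense open subset $D(q')\subseteq\overline{\pi(\Gamma)}$ over which the fibers of $\Gamma\cap L$ are nonempty of dimension $d-d'$; so the paper's $L$ is coordinate-aligned (matching the heuristic of \Cref{rmrk_random_hyperplane}), the ``generic choice'' clause is constructive, and no external algebro-geometric theorems are needed. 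Your route buys geometric transparency and more general (non-coordinate) hyperplanes, at the cost of relying on standard but nontrivial facts that the paper's argument delivers for free: (i) your iteration is applied to $\Gamma_0=\Gamma\cap L$, which is typically no longer irreducible, so strictly speaking you are not re-invoking the proposition but re-running the Bertini step componentwise on the (possibly reducible) generic fiber over the unchanged prime $\p=I\cap B$ -- this works, since your condition already quantifies over components, but it should be said; (ii) the ``miss'' locus of hyperplanes is not literally closed, only contained in a proper closed subset (e.g.\ via the projective closure and the duals of the components at infinity), which is all you need for the descent; and (iii) the conclusion as used in \Cref{subsection_algebraic_dimension} concerns fibers over a dense open subset of $\overline{\pi(\Gamma)}$, so passing from the dimension of the scheme-theoretic generic fiber to fiber dimensions over such an open set requires the standard constancy-on-a-dense-open result, which the paper's comprehensive-Gröbner-basis specialization establishes directly. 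None of these is a fatal gap, but they are the points where your sketch leans on classical machinery that the paper deliberately replaces by effective Gröbner-basis arguments.
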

\begin{proof}
	
	Define $K \coloneqq \operatorname{Frac}( B / ( I \cap B ) )$ to be the field of fractions of the domain $B/(I\cap B)$.
	Due to our assumption it is a field extension of $k$.
	Since $k$ has effective coset representatives then so does $B$.
	Hence, one can decide equality of elements in $B/(I\cap B)$ and $K$ is a constructive field, in particular, again with decidable equality of elements.
	
	Consider the extended ideal $I^e = \langle I \rangle \lhd K[x_1, \ldots, x_n]$.
	After possible renaming of the indeterminates we can assume that $x_1, \ldots, x_{d'}$ is an independent set modulo\footnote{Note that $d' = \dim I^e$.} $I^e$.
	Compute a \emph{comprehensive} Gröbner basis $\mathcal{G}$ of $I^e \lhd K[x_1,\ldots, x_{d'}][x_{d'+1}, \ldots, x_n]$ and set $q \coloneqq \prod_{p \in \mathcal{G}} \mathrm{LC}(p) \in K[x_1, \ldots, x_{d'}] \setminus \{ 0 \}$.
	Now choose a specialization $\sigma: K[x_1, \ldots, x_{d'}] \to K, x_i \mapsto a_i$ such that $a_i \in k \subseteq K$ and $\sigma(q) = q(a_1, \ldots, a_{d'}) \neq 0$.
	Since $\mathcal{G}$ is a comprehensive Gröbner basis it follows that the $\sigma$-specialization $\mathcal{G}_\sigma \subseteq K[x_{d'+1}, \ldots, x_n]$ is a Gröbner basis of the $\sigma$-specialization $J$ of $I^e$ to $K[x_{d'+1}, \ldots, x_n]$.
	Since $q(a_1, \ldots, a_{d'}) \neq 0$ the Gröbner basis $\mathcal{G}_\sigma$ has no constant polynomials, in particular, the ideal $J$ is not the unit ideal in $K[x_{d'+1}, \ldots, x_n]$ and $\dim J = d-d'$ due to the independence of $x_1,\ldots, x_{d'}$.
	Without loss of generality we can assume all leading coefficients of $\mathcal{G}_\sigma$ to be $1$.
	The product $q'$ of the denominators of the remaining coefficients is an element of $B \setminus I$.
	It follows that $\mathcal{G}_\sigma$ is a Gröbner basis with normalized leading coefficients over the localization $(B/(I \cap B))_{q'}$.
	Now we are done with $L' \coloneqq V(\langle x_1 - a_1, \ldots, x_{d'} - a_{d'}\rangle) \subseteq \AA_k^n$, since a dense subset of $\overline{\pi}(\Gamma)$ has a preimage in $\Gamma\cap L'$ under $\pi$.
\end{proof}

Otherwise, if $k \cap I \neq \langle 0 \rangle$ then we replace $k$ by $k / (k \cap I)$, $B$ by $B / \langle k \cap I \rangle_B$, and $R$ by $R / \langle k \cap I \rangle_R$.
By the discussion at the beginning of this Section we can again assume $k$ to be a domain.
If $k$ is finite, it is a finite field and we consider a finite field extension $\widetilde{k}/k$ over which the linear space constructed in the proof of \Cref{thm:IntersectWithGenericAffineSubspace} over the algebraic closure $\overline{k}$ can be realized (see \Cref{rmrk_random_hyperplane}).
Geometrically this means that we are considering the image of $\widetilde{\Gamma} \coloneqq \Spec \widetilde{k} \times_{\Spec k} \Gamma$ under the composed projection $\widetilde{\pi}: \AA^n_{\widetilde{B}} \twoheadrightarrow \Spec \widetilde{B} \twoheadrightarrow \Spec B$, where $\widetilde{B} \coloneqq \widetilde{k} \otimes_k B$.
Note that $\widetilde{\pi}(\widetilde{\Gamma}) = \pi(\Gamma)$ and that $\Spec \widetilde{B} \subset \Spec B[t]$ is itself affine over $\Spec B$ with zero dimensional fibers.
This finishes the proof of generic existence of the hyperplanes needed in Algorithm~\ref{algorithm_zero_dimensional_fibers_heuristic} and its variant (Algorithm~\ref{algorithm_zero_dimensional_fibers_heuristic_full}) below.

\section{Get rid of the primary decomposition}\label{section_no_primary}

To bring the dimension of the fibers to zero, in particular to apply \Cref{section_exist_planes} correctly, we need irreducible varieties.
\Cref{subsection_algebraic_dimension} applies the primary decomposition to ensure irreducibility, but only after trying some heuristics.
In this section, we present a stronger heuristic approach that makes the primary decompositions redundant.
Despite theoretical interest, this approach does not seem faster than computing a primary decomposition of the smaller $\Gamma_0$ (cf.\ line~\ref{algorithm_zero_dimensional_fibers_heuristic_decomposition} of Algorithm~\ref{algorithm_zero_dimensional_fibers_heuristic}; note that computing a decomposition of the entire $\Gamma$ is much more expensive).

The stronger heuristic uses a hyperplane, which reduces the dimension of the image to construct a split in total space, as in \Cref{exmp_split_in_total_space}.

\begin{algorithm}[H]
	\SetKwIF{If}{ElseIf}{Else}{if}{then}{elif}{else}{}%
	\DontPrintSemicolon
	\SetKwProg{ZeroDimensionalFibersNoDecomposition}{ZeroDimensionalFibersNoDecomposition}{}{}
	\LinesNotNumbered
	\KwIn{
		A closed subset $\Gamma \subseteq \AA_B^n \xtwoheadrightarrow{\pi} \Spec B$.
		We assume $k$ to be an infinite domain and that the composition $\Gamma \to \Spec B \to \Spec k$ is dominant.
	}
	\KwOut{
		A closed affine subset $\Gamma_0'\subseteq\Gamma$ and a (possibly empty) list of additional closed affine subsets $[\Gamma_1',\ldots,\Gamma_e']$ defining a closed affine subset $\Gamma_0 \coloneqq \Gamma_0'\cup \Gamma_1'\cup\ldots\cup \Gamma_e' \subseteq \Gamma$, such that \ref{condition_0_2} and \ref{condition_0_1} are satisfied; more specifically, the fibers in \ref{condition_0_1} are locally zero-dimensional over the subset $\Gamma_0'$ of $\Gamma_0$.
	}
	\ZeroDimensionalFibersNoDecomposition(){($I$)}{
		\nl  $\Gamma_0' \coloneqq \Gamma$\tcp*{candidate $\Gamma_0$, dimension to be decreased below}
		\nl  $s \coloneqq 1$\tcp*{step counter}
		\nl  $\ell \coloneqq []$\tcp*{list of additional components}
		\tcc{decrease the dimension in the fiber, till it is zero}
		\nl  \While{$\dim(\Gamma_0')-\dim(\pi(\Gamma_0'))>0$}{
			\nl  $E \coloneqq \operatorname{RandomHyperplane}(\AA^n_k)\times_{\Spec k}\Spec B$\tcp*{cf.\ \Cref{rmrk_random_hyperplane}}
			\nl  $\Gamma_0''\coloneqq \Gamma_0'\cap E$\tcp*{intersect with hyperplane}
			\nl  \If(\tcp*[f]{intersection decreases dimension\ldots}){$\dim(\Gamma_0'')<\dim(\Gamma_0')$}{
				\nl  \If(\tcp*[f]{\ldots without reducing the image}) {$\overline{\pi(\Gamma_0')}\subseteq \overline{\pi(\Gamma_0'')}$}{
					\nl $\Gamma_0' \coloneqq \Gamma_0''$\tcp*{found smaller $\Gamma_0'$}
				}
				\nl  \ElseIf(\tcp*[f]{split w.r.t.\ reduced image, avoid early}){$s>n$}{
					\tcc{try splitting $\Gamma_0'$ by splitting the base space}
					\nl  $\Delta\coloneqq \overline{\overline{\pi(\Gamma_0')}\setminus\overline{\pi(\Gamma_0'')}}$\tcp*{``complement'' of the reduced image}
					\nl  \If(\tcp*[f]{if the image reduces nontrivially}){$\overline{\pi(\Gamma_0')}\not\subseteq\Delta$}{
						\nl  $\Gamma_0' \coloneqq \pi^{-1}\left(\overline{\pi\left(\Gamma_0''\right)}\right)\cap\Gamma$\tcp*{continue with one part}
						\nl  $\ell\coloneqq \operatorname{Add}(\ell,\pi^{-1}(\Delta)\cap\Gamma)$\tcp*{store the other half}
					}
					\ElseIf(\tcp*[f]{very expensive, avoid early}){$s>4n$}{
						\nl	$\Gamma_1\coloneqq\pi^{-1}\left(\overline{\pi(\Gamma_0'')}\right)\cap\Gamma$\tcp*{preimage of $\overline{\pi(\Gamma_0'')}$}
						\nl  \If(\tcp*[f]{if this preimage is not the entire $\Gamma$}){$\Gamma\not\subseteq\Gamma_1$}{
							\nl $\Gamma_2\coloneqq\overline{\Gamma\setminus\overline{\Gamma_1}}$\tcp*{``complement'' of the preimage}
							\nl  \If{$\Gamma\not\subseteq\Gamma_2$}{
								$\Gamma_0'\coloneqq\Gamma_1$\tcp*{continue with one part}
								\nl  $\ell\coloneqq \operatorname{Add}(\ell,\Gamma_1)$\tcp*{store the other half}
								
							}
						}
					}
				}
			}
			\nl  $s\coloneqq s + 1$\tcp*{increase number of steps}
		}
		\nl  \Return{$[\Gamma_0',\ell]$}
	}
	\caption{ZeroDimensionalFibersNoDecomposition \label{algorithm_zero_dimensional_fibers_heuristic_full}}
\end{algorithm}

Generically, we expect the following behavior in Algorithm~\ref{algorithm_zero_dimensional_fibers_heuristic_full}:
Intersecting with a generic hyperplane decreases the dimension of each component of $\Gamma$.
The decrease in dimension generically happens without decreasing the image, as long as the dimension of the fiber is positive over a dense set of the image.
Once we can no longer reduce the dimension of the fiber, the image of certain components generically needs to shrink.
In this case, $\Gamma$ generically splits into two components, similar to \Cref{exmp_split_in_total_space} (or, more rarely, but computationally less expensive, similar to \Cref{exmp_split_in_base_space}).

\section{Improving the iteration in Algorithm~\ref{alg:iteration}} \label{sec:improve_alg_iteration}

As mentioned in \Cref{rmrk:decompose_rbh} it is often advantageous to exploit known decompositions of the relative boundary hulls or even to compute decompositions in irreducible components.
As a consequence, Algorithm~\ref{alg:iteration_graph} below will then follow a directed graph structure rather than the linear structure in Algorithm~\ref{alg:iteration}.
To achieve this flow in Algorithm~\ref{alg:iteration_graph} we loop over a bookkeeping bipartite directed graph data structure which we will now define.
The poset $(Z,\succeq)$ in the following definition will specialize to the poset of closed reduced subsets of $\Spec B$ in Algorithm~\ref{alg:iteration}.

\begin{defn}
  A \textbf{(bookkeeping) bipartite directed graph data structure} $c$ consists of a background poset $(Z,\succeq)$ together with
  \begin{itemize}
    \item a mutable finite set $\A(c)$ of positive nodes,
    \item a mutable finite set $\D(c)$ of negative nodes, and
    \item a finite FIFO $\mathcal{P}(c)$ for so-called pre-nodes (with push and pop operations),
  \end{itemize}
  where:
  \begin{itemize}
    \item A \textbf{positive node} $A = (\widetilde{A},\operatorname{parents}(A),\operatorname{children}(A))\in\A(c)$ consists of an underlying object $\widetilde{A} \in Z$ together with two mutable subsets
      \[
        \operatorname{parents}(A),\operatorname{children}(A) \subseteq \D(c)
      \]
      satisfying:
      \begin{align*}
        \widetilde{D} \succeq \widetilde{A} \succeq \widetilde{D'} \quad \forall D \in \operatorname{parents}(A), D' \in \operatorname{children}(A) \mbox{.}
      \end{align*}
      Furthermore, require that the map $\widetilde{\cdot}: \A(c) \to Z, A \mapsto \widetilde{A}$ is injective.
    \item A \textbf{negative node} $D=(\widetilde{D},\operatorname{parents}(D),\operatorname{children}(D))\in \D(c)$ consists of an underlying object $\widetilde{D} \in Z$ together with two mutable subsets
      \[
        \operatorname{parents}(D),\operatorname{children}(D) \subseteq \A(c)
      \]
      satisfying:
      \begin{align*}
        \widetilde{A} \succeq \widetilde{D} \succeq \widetilde{A'} \quad \forall A \in \operatorname{parents}(D), A' \in \operatorname{children}(D) \mbox{.}
      \end{align*}
      Furthermore, require that the map $\widetilde{\cdot}: \D(c) \to Z, D \mapsto \widetilde{D}$ is injective.
    \item A \textbf{pre-node} in $\mathcal{P}(c)$ is a triple $(D, \ell, \Gamma)$, where $D \in \D(c)$, $\ell$ a nonnegative integer (called \textbf{level}), and $\Gamma$ an arbitrary object with no further specification.
    \item Let $A\in\A(c)$ be a positive node and $D\in\D(c)$ a negative node.
    Then $D\in\operatorname{parents}(A)$ iff $A\in\operatorname{children}(D)$ and $A\in\operatorname{parents}(D)$ iff $D\in\operatorname{children}(A)$.
    \item For each negative node $D\in\D(c)$ we have $\operatorname{parents}(D)\not=\emptyset$.
  \end{itemize}
  The bipartite directed graph structure is defined by the parents-children relationship.
\end{defn}

We wrote several algorithms to manipulate $c$.
The following algorithm will be used in the main Algorithm~\ref{alg:iteration_graph} to attach the locally closed output $\widetilde{A} \setminus (\widetilde{D_1} \cup \dots \cup \widetilde{D_a})$ of \textbf{LocallyClosedApproximationOfProjection}($\Gamma \cap \pi^{-1}(\widetilde{D})$) (Algorithm~\ref{alg:LCA}) to a pre-existing negative node $D\in \D(c)$.
The closure $\widetilde{A}$ of the projection will be attached as a positive node (with underlying object $\widetilde{A}$) and the $\widetilde{D_i}$'s as negative nodes (with underlying objects $\widetilde{D_i}$).

\begin{center}
\begin{algorithm}[H]
        \SetKwIF{If}{ElseIf}{Else}{if}{then}{elif}{else}{}%
        \DontPrintSemicolon
        \SetKwProg{Attach}{Attach}{}{}
        \LinesNotNumbered
        \KwIn{
        A bipartite directed graph data structure $c$,
        a negative node $D\in\D(c)$,
        an integer $\ell \geq 0$,
        an object $\widetilde{A} \in Z$,
        objects $\{ \widetilde{D_1}, \ldots, \widetilde{D_a}\} \subset Z$, and an object $\Gamma$
        }
        \KwOut{
        Nothing, a side effect on $c$
        }
        \Attach(){($c$, $D$, $\ell$, $A$, $\{D_1, \ldots, D_a\}$, $\Gamma$)}{
                \nl \uIf(){$\exists B \in \A(c): \widetilde{A} = \widetilde{B}$}{
                \nl $A \coloneqq B$\;
                }
                \nl \Else{
                \nl Add $A \coloneqq (\widetilde{A}, \emptyset, \emptyset)$ to $\A(c)$\;
                }
                \nl Add $D$ to $\operatorname{parents}(A)$\;
                \nl \For{$i = 1, \ldots, \ell$}{
                \nl \uIf(){$\exists D \in \D(c): \widetilde{D_i} = \widetilde{D}$}{
                \nl $D_i \coloneqq D$\;
                \nl Create a pre-node $(D_i, \ell+1, \Gamma)$ and push it to $\mathcal{P}(c)$\;
                }
                \nl \Else{
                \nl Add $D_i \coloneqq (\widetilde{D_i}, \emptyset, \emptyset)$ to $\D(c)$\;
                }
                \nl Add $A$ to $\operatorname{parents}(D_i)$\;
                \nl Add $D_i$ to $\operatorname{children}(A)$\;
                }
        }
\caption{Attach positive node \& corresponding negative nodes to negative node}
\label{alg:Attach}
\end{algorithm}
\end{center}

The following algorithm checks whether the FIFO of pre-nodes of the bipartite directed graph data structure $c$ has been exhausted.
It will be used as the while-loop condition in Algorithm~\ref{alg:iteration_graph}:
\begin{center}
\begin{algorithm}[H]
        \SetKwIF{If}{ElseIf}{Else}{if}{then}{elif}{else}{}%
        \DontPrintSemicolon
        \SetKwProg{IsDone}{IsDone}{}{}
        \LinesNotNumbered
        \KwIn{
        A bipartite directed graph data structure $c$
        }
        \KwOut{
        $\mathtt{true}$ or $\mathtt{false}$
        }
        \IsDone(){($c$)}{
                \nl \Return \texttt{IsEmpty}($\mathcal{P}(c)$)\;
        }
\caption{Check if the FIFO of pre-nodes of $c$ is exhausted}
\label{alg:IsDone}
\end{algorithm}
\end{center}

The next algorithm is used in Algorithm~\ref{alg:iteration_graph} to extract the oldest pre-node in the bipartite directed graph data structure $c$.
\begin{center}
\begin{algorithm}[H]
        \SetKwIF{If}{ElseIf}{Else}{if}{then}{elif}{else}{}%
        \DontPrintSemicolon
        \SetKwProg{Pop}{Pop}{}{}
        \LinesNotNumbered
        \KwIn{
        A bipartite directed graph data structure $c$
        }
        \KwOut{
        A decorated pre-node $(D_i, \ell, \Gamma)$
        }
        \Pop(){($c$)}{
                \nl \Return $\operatorname{Pop}(\mathcal{P}(c))$, i.e., return the oldest element in $\mathcal{P}(c)$ and delete it from $\mathcal{P}(c)$\;
        }
\caption{Get the oldest pre-node of $c$}
\label{alg:Pop}
\end{algorithm}
\end{center}

Algorithm~\ref{alg:iteration_graph} will use the following procedure to check if all pre-nodes in $c$ of a certain level have been processed in order to trigger the squashing of $c$.
\begin{center}
\begin{algorithm}[H]
        \SetKwIF{If}{ElseIf}{Else}{if}{then}{elif}{else}{}%
        \DontPrintSemicolon
        \SetKwProg{MinimalLevelOfPreNodes}{MinimalLevelOfPreNodes}{}{}
        \LinesNotNumbered
        \KwIn{
        A bipartite directed graph data structure $c$
        }
        \KwOut{
        A non-negative integer
        }
        \MinimalLevelOfPreNodes(){($c$)}{
                \nl \Return $\min \{\ell \mid ( D, \ell, \Gamma) \in \mathcal{P}(c) \}$\;
        }
\caption{The minimum level among the pre-nodes of $c$}
\label{alg:MinimalLevelOfPreNodes}
\end{algorithm}
\end{center}

After all pre-nodes of a certain level have been processed the bipartite directed graph data structure $c$ can be squashed as follows:
\begin{itemize}
  \item remove all pairs $(D,A) \in \D(c) \times \A(c)$ from $c$ once $D$ is the only parent of $A$ and $A$ is the only child of $D$ and $A=D$;
  \item reassign the parents-children relationship in $c$;
  \item for all positive nodes $A \in \A(c)$ remove all obsolete children $D \in \operatorname{children}(A)$, i.e., remove all those children of $A$ which are smaller than other children of $A$ with respect to $\succeq$.
\end{itemize}

\begin{center}
\begin{algorithm}[H]
        \SetKwIF{If}{ElseIf}{Else}{if}{then}{elif}{else}{}%
        \DontPrintSemicolon
        \SetKwProg{Squash}{Squash}{}{}
        \LinesNotNumbered
        \KwIn{
        A bipartite directed graph data structure $c$
        }
        \KwOut{
        Nothing, a side effect on the bipartite directed graph data structure $c$
        }
        \Squash(){($c$)}{
                \nl \For{$A \in \A(c)$}{
                \nl \If(){$\operatorname{parents}(A) = \{D\}$ and $\operatorname{children}(D) = \{A\}$ and $\widetilde{A} = \widetilde{D}$}{
                \nl \For{$A' \in \operatorname{parents}(D)$}{
                \nl Add the elements of $\operatorname{children}(A)$ to $\operatorname{children}(A')$\;
                }
                \nl \For{$D' \in \operatorname{children}(A)$}{
                \nl Add the elements of $\operatorname{parents}(D)$ to $\operatorname{parents}(D')$\;
                }
                \nl remove $A,D$ from\footnote{i.e., remove $A$ from $\A(c)$ and from $\operatorname{parents}(D')$ and $\operatorname{children}(D')$ of all $D' \in \D(c)$ and remove $D$ from $\D(c)$ and from $\operatorname{parents}(A')$ and $\operatorname{children}(A')$ of all $A' \in \A(c)$.} $c$\;
                }
                }
                \nl \For{$A \in \A(c)$}{
	                \nl \For{$D \in \operatorname{children}(A)$}{
		                \nl \If(){$\exists D' \in \operatorname{children}(A) \setminus \{D\}$ with $\widetilde{D'} \succeq \widetilde{D}$}{
			                \nl remove $D$ from $\operatorname{children}(A)$\;
			                \nl remove $A$ from $\operatorname{parents}(D)$\;
			                \nl \If(){$\operatorname{parents}(D)=\emptyset$}{
			                	\nl remove $D$ from $\D(c)$
			            	}
		                }
	                }
                }
        }
\caption{Squash the bipartite directed graph data structure $c$}
\label{alg:Squash}
\end{algorithm}
\end{center}
The squash operation can be repeated until $\A(c)$ and $\D(c)$ do not decrease any further.

We graphically demonstrate the first loop of \textbf{squash}, which removed a negative (light) red node and its child, a positive (light) green node.
Parents are drawn left of their children.
\begin{center}
	\begin{tikzpicture}[scale=.8,cap=round]
		\foreach \x in {0}
			\foreach \y in {2.5,3.5} {
				\filldraw[fill=darkgreen] (\x,\y) circle (0.4);
			}
		\foreach \x in {2}
			\foreach \y in {3} {
				\filldraw[fill=red!50] (\x,\y) circle (0.35);
			}
		\foreach \x in {2}
			\foreach \y in {1,5} {
				\filldraw[fill=red] (\x,\y) circle (0.35);
			}
		\foreach \x in {4}
			\foreach \y in {3} {
				\filldraw[fill=darkgreen!50] (\x,\y) circle (0.35);
			}
		\foreach \x in {6}
			\foreach \y in {2,3,4} {
				\filldraw[fill=red] (\x,\y) circle (0.25);
			}
		\begin{pgfonlayer}{background}
		\foreach \xa / \xb in {0 / 2}
			\foreach \ya in { 2.5, 3.5 }
				\foreach \yb in { 3 }
				{
					\draw[-] (\xa,\ya) -- (\xb,\yb);
				}
		\foreach \xa / \xb in {4 / 6}
			\foreach \ya in { 3 }
				\foreach \yb in { 2, 3, 4 }
				{
				\draw[-] (\xa,\ya) -- (\xb,\yb);
				}
		\draw[-,double,double distance=0.3em] (2,3) -- (4,3);
		\draw[-] (0,2.5) -- (2,1);
		\draw[-] (0,3.5) -- (2,5);
		\end{pgfonlayer}
		
		\draw [->,
		line join=round,
		line width=1.5,
		decorate, decoration={
			zigzag,
			segment length=6,
			amplitude=.9,post=lineto,
			post length=2pt
		}]  (7,3) -- node[above]{\textbf{Squash}} node[below]{\textbf{1.\ loop}} (9,3);
		
		\foreach \x in {10}
			\foreach \y in {2.5,3.5} {
				\filldraw[fill=darkgreen] (\x,\y) circle (0.4);
			}
		\foreach \x in {12}
			\foreach \y in {1,5} {
				\filldraw[fill=red] (\x,\y) circle (0.35);
			}
		\foreach \x in {12}
			\foreach \y in {2,3,4} {
				\filldraw[fill=red] (\x,\y) circle (0.25);
			}
		\begin{pgfonlayer}{background}
		\foreach \xa / \xb in {10 / 12}
			\foreach \ya in { 2.5, 3.5 }
				\foreach \yb in { 2, 3, 4 }
				{
					\draw[-] (\xa,\ya) -- (\xb,\yb);
				}
		\draw[-] (10,2.5) -- (12,1);
		\draw[-] (10,3.5) -- (12,5);
		\end{pgfonlayer}
	\end{tikzpicture}
\end{center}

This diagram shows how the first loop of \textbf{Squash} brings more negative nodes to a single layer, such that the second loop can be more productive in removing negative nodes.
Typically, the negative nodes originally appearing in an earlier generation are $\succeq$-bigger than those appearing in later generations.
Hence, some of these $\succeq$-smaller nodes can be removed by the second loop in \textbf{Squash}, as the next diagram shows.
\begin{center}
	\begin{tikzpicture}[scale=.8,cap=round]
	\foreach \x in {0}
		\foreach \y in {2.5,3.5} {
			\filldraw[fill=darkgreen] (\x,\y) circle (0.4);
		}
	\foreach \x in {2}
		\foreach \y in {1,5} {
			\filldraw[fill=red] (\x,\y) circle (0.35);
		}
	\foreach \x in {2}
		\foreach \y in {2,3,4} {
			\filldraw[fill=red] (\x,\y) circle (0.25);
		}
	\begin{pgfonlayer}{background}
	\foreach \xa / \xb in {0 / 2}
		\foreach \ya in { 2.5, 3.5 }
			\foreach \yb in { 2, 3, 4 }
			{
				\draw[-] (\xa,\ya) -- (\xb,\yb);
			}
	\draw[-] (0,2.5) -- (2,1);
	\draw[-] (0,3.5) -- (2,5);
	\end{pgfonlayer}
	\draw [->,
	line join=round,
	line width=1.5,
	decorate, decoration={
		zigzag,
		segment length=6,
		amplitude=.9,post=lineto,
		post length=2pt
	}]  (3,3) -- node[above]{\textbf{Squash}} node[below]{\textbf{2.\ loop}} (5,3);
	
	\foreach \x in {6}
	\foreach \y in {2.5,3.5} {
		\filldraw[fill=darkgreen] (\x,\y) circle (0.4);
	}
	\foreach \x in {8}
		\foreach \y in {1,5} {
			\filldraw[fill=red] (\x,\y) circle (0.35);
		}
	\foreach \x in {8}
	\foreach \y in {2,3} {
		\filldraw[fill=red] (\x,\y) circle (0.25);
		}
	\begin{pgfonlayer}{background} 
	\foreach \xa / \xb in {6 / 8}
		\foreach \ya in { 2.5, 3.5 }
			\foreach \yb in { 3 }
			{
				\draw[-] (\xa,\ya) -- (\xb,\yb);
			}
	\draw[-] (6,2.5) -- (8,1);
	\draw[-] (6,2.5) -- (8,2);
	\draw[-] (6,3.5) -- (8,5);
	\end{pgfonlayer}
	\end{tikzpicture}
\end{center}

The following last helper algorithm is used by Algorithm~\ref{alg:iteration_graph} to convert the bipartite directed graph data structure $c$ into the corresponding constructible set $C$, as a disjoint union of the multiple differences $L = \widetilde{A} \setminus \widetilde{D_1} \setminus \ldots \setminus \widetilde{D_a} = \widetilde{A} \setminus (\widetilde{D_1} \cup \dots \cup \widetilde{D_a})$.
Since we do not assume $(Z, \succeq)$ to be a Boolean algebra we understand multiple differences and their unions as a formal expressions.
In our application they evaluate to elements in the Boolean algebra of constructible sets.

\begin{center}
\begin{algorithm}[H]
        \SetKwIF{If}{ElseIf}{Else}{if}{then}{elif}{else}{}%
        \DontPrintSemicolon
        \SetKwProg{AsUnionOfMultipleDifferences}{AsUnionOfMultipleDifferences}{}{}
        \LinesNotNumbered
        \KwIn{
        A bipartite directed graph data structure $c$
        }
        \KwOut{
        A constructible set $C$ as disjoint union of multiple differences
        }
        \AsUnionOfMultipleDifferences(){($c$)}{
                \nl $C \coloneqq \emptyset$\;
                \nl \For{$A \in \A(c)$}{
                \nl $L \coloneqq \widetilde{A}$\;
                \nl \For{$D \in \operatorname{children}(A)$}{
                \nl $L \coloneqq L \setminus \widetilde{D}$\;
                }
                \nl $C \coloneqq C\, \uplus\, L$\ \tcp*{$L = \widetilde{A} \setminus \widetilde{D_1} \setminus \ldots \setminus \widetilde{D_a}$}
                }
                \Return $C$\;
        }
\caption{The constructible set defined by $c$}
\label{alg:AsUnionOfMultipleDifferences}
\end{algorithm}
\end{center}

We can now formulate the promised substitute of Algorithm~\ref{alg:iteration}:
\begin{center}
\begin{algorithm}[H]
        \SetKwIF{If}{ElseIf}{Else}{if}{then}{elif}{else}{}%
        \DontPrintSemicolon
        \SetKwProg{ConstructibleProjection}{ConstructibleProjection}{}{}
        \LinesNotNumbered
        \KwIn{
        A closed subset $\Gamma \subseteq \AA_B^n \xtwoheadrightarrow{\pi} \Spec B$
        }
        \KwOut{
        $\pi(\Gamma) = C$ as a finite \emph{disjoint} union of locally closed subsets of $\Spec B$
        }
        \ConstructibleProjection(){($\Gamma$)}{
                \nl Initialize a bipartite directed graph data structure $c$ with a single pre-node $(\Spec B, 0, \Gamma)$ for the background poset $(Z, \succeq)$ being the poset of closed reduced subsets of $\Spec B$\;
                \nl \While(\tcp*[f]{as long as there are pre-nodes to process}){not {\bf IsDone}$(c)$}{
                \nl $(D, \ell, \Gamma) \coloneqq \textbf{Pop}(c)$ \tcp*{get the oldest pre-node}
                \nl $\Gamma \coloneqq \Gamma \cap \pi^{-1}(\widetilde{D})$ \tcp*{compute the preimage of $\widetilde{D}$ in $\Gamma$} \label{alg:iteration:preimage}
                \nl $\widetilde{A}, \{\widetilde{D_1}, \ldots, \widetilde{D_a}\} \coloneqq \textbf{LocallyClosedApproximationOfProjection}(\Gamma)$\;\label{alg:iteration_graph_line_lca}
                \nl \If(){$\widetilde{A} \neq \emptyset$}{
                  \nl \textbf{Attach}($c,D,\ell,\widetilde{A},\{\widetilde{D_1}, \ldots, \widetilde{D_a}\}, \Gamma$)\;
                }
                \nl \If(\tcp*[f]{once level $\ell$ is exhausted}){{\bf MinimalLevelOfPreNodes}($c$) $> \ell$}{
                  \nl \textbf{Squash}($c$)\;
                }
                }
                \nl \Return \textbf{AsUnionOfMultipleDifferences}($c$)\;
        }
\caption{Compute constructible projection (geometric)}
\label{alg:iteration_graph}
\end{algorithm}
\end{center}

Note that \textbf{LocallyClosedApproximationOfProjection} might additionally return multiple components $[\Gamma_1',\ldots,\Gamma_e']$ (cf.\ \Cref{remark_additional_components}), which we then add immediately after line \ref{alg:iteration_graph_line_lca} at the \emph{beginning} of $\mathcal{P}(c)$ as new pre-nodes $(D,\ell,\Gamma_i')$.

\begin{rmrk}
  We now compare our use of the bipartite directed graph data structure $c$ with the constructible tree used in \cite{ComputingImages}.
  Whereas they clean up the constructible tree ``at the end of the loop'' to get rid of obsolete nodes, we squash the data structure after each level has been completed in order to avoid computing at least some of the obsolete nodes.
  Otherwise, the first and second loop in the squash Algorithm~\ref{alg:Squash} correspond to the first and second cleaning operation in \cite[Section 3.1.1]{ComputingImages}, respectively.
  A further simplification is that we do not need to delete negative nodes ``and all of its descendants'' since the negative nodes we delete in the second loop will not have any descendants when the level in which they are computed is completed.
\end{rmrk}

\begin{rmrk}
  Algorithm~\ref{alg:iteration_graph} can be parallelized within a single level $\ell$.
  The procedure \textbf{Attach} is a blocking operation and the procedure \textbf{Squash} should be performed in single-threaded mode.
  In particular, the expensive procedure \textbf{LocallyClosedApproximationOfProjection}($\Gamma$) can be called for multiple $\Gamma$'s within a fixed level $\ell$.
\end{rmrk}

\section{A primer on toric varieties}\label{section_primer_toric}

Let $N \cong \Z^n$ be a lattice and $T_N \coloneqq N \otimes_\Z k^* \cong \mathbb{T}_k^n$ the corresponding torus over the algebraically closed field $k$ of characteristic $0$ (with $N$ being the lattice of one-parameter subgroups of $T_N$).
Further let $\sigma \subset N_\R$ be a strongly convex rational polyhedral cone, $\sigma^\vee \subset M_\R$ its dual cone, where $M \coloneqq N^\vee \coloneqq \Hom_\Z(N,\Z)$ is the character lattice of $T_N$.
A Hilbert basis $\mathscr{H} \coloneqq \{m_1, \ldots, m_s\} \subset M$ of the associated saturated affine semigroup $\mathsf{S}_\sigma \coloneqq \sigma^\vee \cap M$ describes the action morphism
\begin{align*}
\alpha^{\mathscr{H}}:
\begin{cases}
\AA^s_k \times T_N & \hookrightarrow \AA^s_k, \\
(\underline{a}, \underline{t}) = (a_1, \ldots, a_s, t_1, \ldots, t_n) & \mapsto (b_1, \ldots, b_s) \coloneqq (a_1 \underline{t}^{m_1}, \ldots, a_s \underline{t}^{m_s}) \mbox{.}
\end{cases}
\end{align*}
of the torus $T_N$ on $\AA^s_k$.
The principal orbit $O(\{0\}) \coloneqq p_0 \cdot T_N$ of the distinguished point $p_0 \coloneqq (1,\ldots,1) \in \AA^s_k$ under the torus action can be described by the image of
\begin{align*}
\alpha^{\mathscr{H}}_{(1,\ldots,1)}:
\begin{cases}
\Spec k[t_1^\pm, \ldots, t_n^\pm] \eqqcolon \mathbb{T}_k^n \cong T_N & \hookrightarrow \AA^s_k \coloneqq \Spec k[b_1, \ldots, b_s], \\
\underline{t} = (t_1, \ldots, t_n) & \mapsto (b_1, \ldots, b_s) \coloneqq (\underline{t}^{m_1}, \ldots, \underline{t}^{m_s}) \mbox{.}
\end{cases}
\end{align*}
The normal affine toric variety $U_\sigma \cong \Spec k[\mathsf{S}_\sigma]$ is defined as the orbit closure $V(\{0\}) = \overline{O(\{0\})}$ in $\AA^s_k$.
The principal orbit $O(\{0\})$ corresponds to the unique $0$-dimensional facet $\{0\}$ of the strongly convex $\sigma$.
More generally, the orbit-cone-correspondence states that each $d$-dimensional facet $\tau \prec \sigma$ corresponds to a distinguished point $p_\tau$ and its $(n-d)$-dimensional orbit $O(\tau) \coloneqq p_\tau \cdot T_N$ with orbit closure $V(\tau) \coloneqq \overline{O(\tau)}$ in $\AA^s_k$.
The coordinates of $p_\tau$ are either $0$ or $1$.
The $i$-th coordinate of $p_\tau$ is $1$ iff $\tau \subset N_\R$ is perpendicular to $m_i \in M_\R$.

The orbit $O(\tau)$ is a principal homogeneous space for the factor group $T_{N(\tau)}\coloneqq N(\tau) \otimes_\Z k^*$ of the full torus $T_N$, for the lattice $N(\tau)\coloneqq N/\Z(\tau\cap N)$.
In particular, each of these orbits admits an epi-mono factorization of the orbit morphism via combinatorial means.

Furthermore, $V(\tau) = \bigcup_{\nu \succeq \tau} O(\nu)$, inducing a stratification of the orbits.
In particular, the locally closed principal torus orbit $O(\{0\})$ admits the description
\[
O(\{0\}) = \underbrace{V(\{0\})}_{\eqqcolon U_\sigma} \setminus \bigcup_{\rho \in \operatorname{Rays}(\sigma)} V(\rho) \mbox{.}
\]
In other words, the relative boundary of the locally closed orbit is $\underline{\partial} O(\{0\}) = \bigcup_{\rho \in \operatorname{Rays}(\sigma)} V(\rho)$.

\newcommand{\etalchar}[1]{$^{#1}$}
\def\cprime{$'$} \def\cprime{$'$} \def\cprime{$'$} \def\cprime{$'$}
  \def\cprime{$'$}
\providecommand{\bysame}{\leavevmode\hbox to3em{\hrulefill}\thinspace}
\providecommand{\MR}{\relax\ifhmode\unskip\space\fi MR }
\providecommand{\MRhref}[2]{%
  \href{http://www.ams.org/mathscinet-getitem?mr=#1}{#2}
}
\providecommand{\href}[2]{#2}

\end{document}